\theoremstyle{plain}
\newtheorem{thm}{Theorem}[section]
\newtheorem{theorem}[thm]{Theorem}
\newtheorem{lemma}[thm]{Lemma}
\newtheorem{cor}[thm]{Corollary}
\newtheorem{prop}[thm]{Proposition}
\newtheorem{statement}[thm]{Statement}
\newtheorem{assumption}[thm]{Assumption}
\newtheorem{conj}[thm]{Conjecture}
\theoremstyle{remark}
\newtheorem{remark}[thm]{Remark}
\newtheorem*{remark*}{Remark}
\numberwithin{equation}{section}
\newcommand{\vect}{\operatorname{span}}
\newcommand{\supp}{\operatorname{supp}}
\newcommand{\Sp}{\operatorname{Sp}}
\newcommand{\app}{\tiny{\operatorname{app}}}
\newcommand{\RR}{\mathbb R}
\newcommand{\R}{\mathbb R}
\newcommand{\N}{\mathbb N}
\newcommand{\C}{\mathbb C}
\let \d \relax
\newcommand{\d}{\partial}
\newcommand{\eps}{\varepsilon}
\let \Re \relax
\DeclareMathOperator{\Re}{Re}
\newcommand{\HM}{\mathfrak H_\mathcal{M}(\alpha)}
\newcommand{\ar}{\rightarrow}
\DeclareMathOperator{\Op}{Op}
\title[Critical points of eigenvalues of Montgomery operators]{On critical points of eigenvalues of the Montgomery family of quartic oscillators}
\author{Bernard Helffer and Matthieu L\'eautaud }
\date{\today}
\begin{document}
\maketitle

\begin{abstract}
We  discuss spectral properties of the family of quartic oscillators $\mathfrak h _{\mathcal M}(\alpha) =-\frac{d^2}{dt^2} +
\Big(\frac{1}{2} t^{2} -\alpha\Big)^2$ on the real line, where $\alpha\in \R$ is a parameter. This operator appears in a variety of applications coming from quantum mechanics to harmonic analysis on Lie groups, Riemannian geometry and superconductivity.
We study the variations of the eigenvalues $\lambda_j(\alpha)$ of $\mathfrak h _{\mathcal M}(\alpha)$ as functions of the parameter $\alpha$.
We prove that for $j$ sufficiently large, $\alpha \mapsto \lambda_j(\alpha)$ has a unique critical point, which is a nondegenerate  minimum.
We also prove that the first eigenvalue $\lambda_1(\alpha)$ enjoys the same property and give a numerically assisted proof that the same holds for the second eigenvalue $\lambda_2(\alpha)$.
The proof for excited states relies on a semiclassical reformulation of the problem. In particular, we develop a method permitting to differentiate with respect to the semiclassical parameter, which may be of independent interest.
\end{abstract}

\bigskip
{\small
\noindent {\sl Keywords:}  Spectral theory, 1D Schr\"odinger operator, quartic potential, eigenvalues, eigenfunctions, semiclassical analysis.

\noindent {\sl AMS Subject Classification (2020):} 
  34L40, 
35P15, 
35P20, 
 81Q05, 
 81Q10, 
 81Q20. 
}

\tableofcontents

\section{Introduction and main  results}
We  consider  the spectral properties of the family  (indexed by $\alpha\in \RR$) of  self-adjoint realizations of the second order differential operator 
$$
\mathfrak h _{\mathcal M}(\alpha):=-\frac{d^2}{dt^2} +
\left(\frac{1}{2} t^{2} -\alpha \right)^2\,,
$$
on the real line.
 This operator appears in many different settings, namely:
 \begin{itemize}
\item in quantum mechanics, see Simon \cite{Sim} (see also~\cite{ReSi});
\item  in the study of irreducible representations of certain nilpotent Lie groups of rank 3: see~\cite{HNdim3,Pham-Robert,He79,He80} (and the references therein) for considerations on analytic hypoellipticity\footnote{The existence of $\alpha \in \mathbb C$ such that $\mathfrak h _{\mathcal M}(\alpha)$ is not injective \cite{Pham-Robert} leads to the proof of the non hypoanalyticity of the Sublaplacian on the Engel group \cite{He79,He80}.} of hypoelliptic operators, see also~\cite{Chatzakou} and~\cite{BBGL} for a recent harmonic analysis on the so-called Engel group, and~\cite{CdVLe} for the analysis of a related sublaplacian;
\item in Riemannian Geometry and the study of Schr\"odinger operators with magnetic fields on compact manifolds and in superconductivity, see~\cite{Mont} (hence the name of Montgomery attributed to this family), and~\cite{AlmHel,FH,HK2008a, HK2008b, HM,HMdim3,Pan,Pan-Kwek,Qmath10}.
 \end{itemize}
 Our results concern the eigenvalues of the operator $\mathfrak h _{\mathcal M}(\alpha)$, acting on 
 $$
 D(\mathfrak h _{\mathcal M}(\alpha)) := \left\{ u \in L^2(\R), -u'' +
\left(\frac{1}{2} t^{2} -\alpha \right)^2u \in L^2(\R) \right\}  \subset L^2(\R) .
 $$
We first recall a list of elementary spectral properties of the operator $\mathfrak h _{\mathcal M}(\alpha)$ (see e.g.~\cite{BBGL} for a detailed proof).  
 \begin{prop}
\label{p:def-lambda}
 For any $\alpha \in\R$, the operator $(\mathfrak h _{\mathcal M}(\alpha), D(\mathfrak h _{\mathcal M}(\alpha)))$ is selfadjoint on $L^2(\R)$, with compact resolvent. Its spectrum consists in countably many real eigenvalues with finite multiplicities, accumulating only at $+\infty$. Moreover, we have the following properties:
\begin{enumerate}
\item The domain $D(\mathfrak h _{\mathcal M}(\alpha))=D(\mathfrak h _{\mathcal M}(0))$ does not depend on $\alpha$\,.
\item \label{eigenvalue-ordering} All eigenvalues are simple and positive, and we may thus write $\Sp(\mathfrak h _{\mathcal M}(\alpha)) = \{\lambda_j(\alpha),j\in \N^*\}$ with 
\begin{align*}
& 0<   \lambda_1(\alpha)< \cdots < \lambda_j(\alpha)< \lambda_{j+1}(\alpha) \to + \infty \,.\\
& \dim \ker (\mathfrak h _{\mathcal M}(\alpha)-\lambda_j(\alpha)) = 1\,.
\end{align*}
\item \label{regularity} All eigenfunctions are real-analytic and decay exponentially fast at infinity (as well as all their derivatives).
\item \label{parity} For all $j\in \N^*$, functions in $\ker (\mathfrak h _{\mathcal M}(\alpha)-\lambda_j(\alpha))$ have the parity of $j+1$.
\end{enumerate}
\end{prop}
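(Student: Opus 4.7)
The plan is to prove the five items by standard tools for one-dimensional Schr\"odinger operators with a confining polynomial potential, exploiting the explicit form $V_\alpha(t) = \tfrac14 t^4 - \alpha t^2 + \alpha^2$. I would start by constructing $\mathfrak h_{\mathcal M}(\alpha)$ via the closed quadratic form $q_\alpha(u) = \|u'\|_{L^2}^2 + \|(\tfrac12 t^2 - \alpha) u\|_{L^2}^2$ defined on $C^\infty_c(\R)$. Since $|V_\alpha - V_0| = |{-}\alpha t^2 + \alpha^2| \leq \eps V_0 + C_\eps$ for any $\eps > 0$ (by Young's inequality $t^2 \leq \eps t^4 + \tfrac{1}{4\eps}$), the form is closable with $\alpha$-independent form domain $\mathcal V := \{u \in H^1(\R) : t^2 u \in L^2(\R)\}$, and the Kato--Rellich theorem (applied with relative bound $0$) gives an $\alpha$-independent operator domain $D(\mathfrak h_{\mathcal M}(\alpha)) = D(\mathfrak h_{\mathcal M}(0))$, proving item (1). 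The embedding $\mathcal V \hookrightarrow L^2(\R)$ is compact by a weighted Rellich argument (since $V_\alpha(t) \to +\infty$ as $|t|\to\infty$), whence $\mathfrak h_{\mathcal M}(\alpha)$ has compact resolvent and a discrete spectrum accumulating only at $+\infty$.

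Positivity of each $\lambda_j(\alpha)$ follows from $q_\alpha \geq 0$: any eigenvalue is nonnegative, and $\lambda_j(\alpha) = 0$ would force an eigenfunction $u$ to satisfy $u' \equiv 0$ and $(\tfrac12 t^2-\alpha) u \equiv 0$, whence $u \equiv 0$. Simplicity is the classical fact for 1D Schr\"odinger operators on $\R$: the $L^2$-solution space of the ODE $-u'' + V_\alpha u = \lambda u$ at each infinity is one-dimensional (there is, up to a scalar, a unique WKB/Weyl limit-point decaying solution at $\pm\infty$), so the full $L^2$ eigenspace has dimension at most one, completing item (2). For item (3), real-analyticity follows from Cauchy's existence theorem applied to this ODE with polynomial (entire) coefficients. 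Exponential decay of $u$ (and of its derivatives, via bootstrapping in $u'' = (V_\alpha - \lambda) u$) comes from Agmon estimates with the Agmon weight $\rho_\lambda(t) := \int_0^t \sqrt{(V_\alpha(s)-\lambda)_+}\, ds \sim \tfrac{|t|^3}{3\sqrt{2}}$, which in fact yields decay faster than any exponential.

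For item (4), the symmetry $V_\alpha(-t) = V_\alpha(t)$ implies that $\mathfrak h_{\mathcal M}(\alpha)$ commutes with the parity involution $Pu(t) := u(-t)$; by the simplicity just shown, each $\phi_j \in \ker(\mathfrak h_{\mathcal M}(\alpha)-\lambda_j(\alpha))$ is an eigenvector of $P$, hence purely even or purely odd. To pin down which, I would combine this with the Sturm oscillation theorem, which asserts that $\phi_j$ has exactly $j-1$ zeros on $\R$. Off-origin zeros come in symmetric pairs $\{\pm t_0\}$, and an even eigenfunction cannot vanish at the origin: $\phi(0) = 0$ together with the automatic $\phi'(0) = 0$ (forced by evenness and $C^1$-regularity) would imply $\phi \equiv 0$ by ODE uniqueness. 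Therefore an even $\phi_j$ has an even number of zeros while an odd $\phi_j$ has an odd number, so that $\phi_j$ is even if and only if $j-1$ is even, i.e.\ if and only if $j+1$ is even, which is precisely item (4). The only step requiring a little care is invoking Sturm's theorem on the whole real line, but this is classical for our confining polynomial potential; the rest merely assembles well-known facts.
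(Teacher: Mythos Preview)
Your argument is correct and follows the standard route for one-dimensional Schr\"odinger operators with confining polynomial potentials. Note, however, that the paper does not itself prove this proposition: it simply states the result and refers to~\cite{BBGL} for a detailed proof. There is therefore no ``paper's own proof'' to compare yours against; what you have written is essentially the kind of argument one finds in~\cite{BBGL} or any standard treatment (form methods for self-adjointness and compact resolvent, Kato--Rellich for domain invariance, the limit-point/Wronskian argument for simplicity, Agmon estimates for decay, and Sturm oscillation combined with parity for item~(4)). Your handling of item~(4) via the zero count is clean and correct; the only point worth a brief remark in a write-up is that Sturm's oscillation theorem on the whole line is indeed valid here because the potential is confining, so the eigenfunction decay at infinity plays the role of the boundary conditions in the classical Sturm theory on a bounded interval.
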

For all $j\in \N^*$, we shall denote by $u_j(\alpha)$ any $L^2(\R)$ normalized real-valued function in $\ker (\mathfrak h _{\mathcal M}(\alpha)-\lambda_j(\alpha))$.
According to Item~(\ref{eigenvalue-ordering}), any such family $\big(u_j(\alpha)\big)_{j\in \N}$ forms a Hilbert basis of $L^2(\R)$.

Some properties of the first eigenvalue of $\mathfrak h _{\mathcal M}(\alpha)$ have been studied in~\cite{He2008}.

The present paper investigates the dependence in $\alpha$ of the
eigenvalues $\lambda_j(\alpha)$, and in particular the nature of its
critical points.
This study is motivated by a question of the second author with H.
Bahouri, I. Gallagher and D. Barilari (see~\cite{BBGL}) related to the
Schr\"odinger evolution equation on the Engel group.
In that context, the study of dispersive properties involves that of
oscillatory integrals whose phases contain $\lambda_j(\alpha)$.
It is thus  likely that dispersive estimates depend on the critical points
of the maps $\alpha \mapsto \lambda_j(\alpha)$ for $j\in \N^*$. 
 
 \medskip
 As far a the variations of $\lambda_j$ are concerned, classical arguments show that $\lambda_j (\alpha)\to +\infty$ as $\alpha \to \pm \infty$ and $\lambda'(\alpha)<0$ for $\alpha\leq 0$ (see Section~\ref{s2}). Henceforth, $\lambda_j$ has at least one critical point, namely a global minimum.
 The graph of the first six eigenvalues $\lambda_j(\alpha)$ is plotted on Figure~\ref{f:graphe-6-vp}; see also Section~\ref{s:numerics} for comments on this figure and additional numerical results.
\begin{figure}[h]
\begin{center}
\includegraphics[width=15pc, height=15pc]{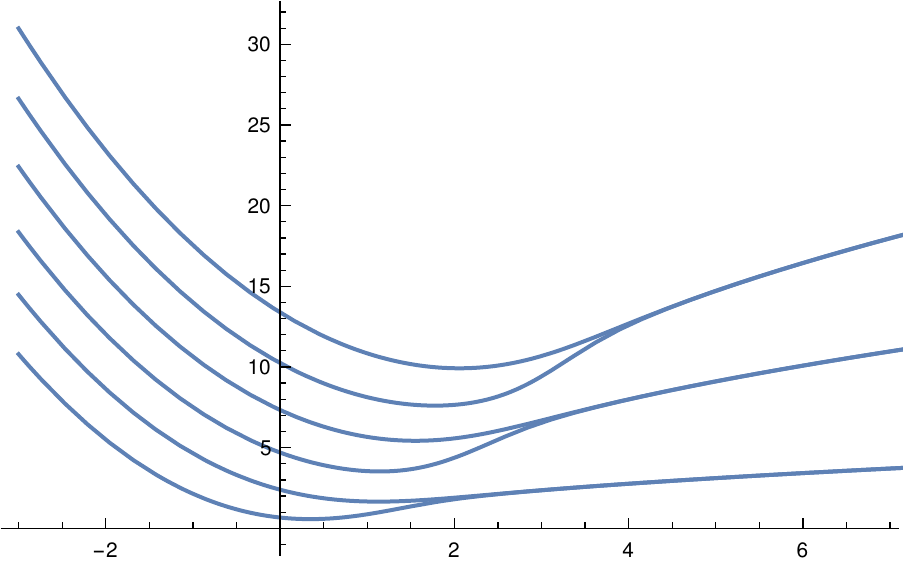}
\end{center}
\caption{Graph of $\alpha \mapsto \lambda_j(\alpha)$ for $j=1,\cdots, 6$.} \label{f:graphe-6-vp}
\end{figure}

Our first main result concerns the first eigenvalue $\lambda_1$.
 \begin{theorem} 
\label{t:lambda1}
The first eigenvalue $\alpha \mapsto \lambda_1(\alpha)$ of $\mathfrak h _{\mathcal M}(\alpha)$ has  a unique critical point $\alpha_{c}$. Moreover $\alpha_c \in (0,1)$, $\lambda_1$ reaches its minimum at $\alpha_c$, i.e. $\lambda_1(\alpha_c)= \min_{\alpha \in \R} \lambda_1(\alpha)$, and this minimum is non-degenerate, i.e. $\lambda_1''(\alpha_c)>0$. 
\end{theorem}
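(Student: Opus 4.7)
The plan is to show that at every critical point $\alpha_c$ of $\lambda_1$ one has $\lambda_1''(\alpha_c) > 0$. Combined with $\lambda_1(\alpha) \to +\infty$ as $|\alpha|\to\infty$ (recalled in the introduction), this forces uniqueness of the critical point: any critical point would then be a strict local minimum, and two such minima would require a local maximum in between. Real-analyticity of $\lambda_1$ and of a normalized eigenfunction $u_1(\alpha)$ follows from Kato's theory, since $\mathfrak h _{\mathcal M}(\alpha)$ is a self-adjoint analytic family of type (B) on the $\alpha$-independent domain, with simple eigenvalues by Proposition~\ref{p:def-lambda}.

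\medskip

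By Feynman--Hellmann, $\lambda_1'(\alpha) = \langle u_1, \partial_\alpha \mathfrak h _{\mathcal M}(\alpha) u_1\rangle = 2\alpha - \int_\R t^2 u_1(\alpha)^2 \, dt$; positivity of the integral immediately yields $\alpha_c > 0$, while the upper bound $\alpha_c < 1$ is obtained by exhibiting an explicit test function to force $\lambda_1'(1) > 0$ (for instance a symmetric pair of Gaussians centered at the wells $\pm\sqrt{2}$ of the potential at $\alpha = 1$, combined with the virial identity below to control $\langle u_1(1), t^2 u_1(1)\rangle$). Second-order Kato perturbation theory gives
\[
\lambda_1''(\alpha) = 2 - 2 \sum_{k \geq 2} \frac{|\langle u_k, t^2 u_1\rangle|^2}{\lambda_k(\alpha) - \lambda_1(\alpha)}.
\]
Since $u_1$ and hence $t^2 u_1$ are even, the parity property in Proposition~\ref{p:def-lambda} forces $\langle u_k, t^2 u_1\rangle = 0$ for every even $k$, so only odd indices $k \geq 3$ contribute.

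\medskip

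The key algebraic input is the virial identity obtained by testing $\mathfrak h _{\mathcal M}(\alpha) u_1 = \lambda_1 u_1$ against $t\, \partial_t u_1$: it reads $2\|u_1'\|^2 = \langle u_1, t^4 u_1\rangle - 2\alpha \langle u_1, t^2 u_1\rangle$, and combined with the eigenvalue identity $\lambda_1 = \|u_1'\|^2 + \langle u_1, (t^2/2-\alpha)^2 u_1\rangle$ it yields
\[
\langle u_1(\alpha), t^4 u_1(\alpha)\rangle = \tfrac{4}{3}\bigl(\lambda_1(\alpha) + 2\alpha \langle u_1, t^2 u_1\rangle - \alpha^2\bigr).
\]
Imposing the critical-point relation $\langle u_1, t^2 u_1\rangle = 2\alpha_c$ collapses this into the clean variance identity $\langle u_1, t^4 u_1\rangle - 4\alpha_c^2 = \tfrac{4}{3}\lambda_1(\alpha_c)$. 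Applying Parseval on the odd-indexed spectral sector together with the crude bound $\lambda_k - \lambda_1 \geq \lambda_3 - \lambda_1$ for $k \geq 3$ then yields
\[
\lambda_1''(\alpha_c) \geq 2 - \frac{2\bigl(\langle u_1, t^4 u_1\rangle - \langle u_1, t^2 u_1\rangle^2\bigr)}{\lambda_3(\alpha_c) - \lambda_1(\alpha_c)} = \frac{2\bigl(\lambda_3(\alpha_c) - \tfrac{7}{3}\lambda_1(\alpha_c)\bigr)}{\lambda_3(\alpha_c) - \lambda_1(\alpha_c)}.
\]

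\medskip

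The main obstacle is therefore the quantitative spectral-gap estimate $\lambda_3(\alpha) > \tfrac{7}{3} \lambda_1(\alpha)$ on the a priori interval $(0,1)$ containing $\alpha_c$. It is easy to check at $\alpha = 0$ (the pure quartic oscillator has $\lambda_3/\lambda_1 \approx 7$) and in the semiclassical regime $\alpha \to +\infty$ (the two-well harmonic analysis gives a limiting ratio of $3$); on the intermediate range it can be secured by combining the upper bound $\lambda_1(\alpha) \leq \lambda_1(0) + \alpha^2$, which follows from monotonicity of $\mathfrak h _{\mathcal M}(\alpha) - \alpha^2 = -\partial_t^2 + t^4/4 - \alpha t^2$ in $\alpha$ as a quadratic form, with a lower bound on $\lambda_3(\alpha)$ obtained from the min-max principle, for instance via comparison with a suitable harmonic oscillator restricted to the orthogonal complement of an explicit two-dimensional space. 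Granting this ratio bound, $\lambda_1''(\alpha_c) > 0$ at every critical point, which closes the argument.
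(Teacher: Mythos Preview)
Your overall strategy---derive the criterion $\lambda_3(\alpha_c) > \tfrac{7}{3}\lambda_1(\alpha_c)$ from second-order perturbation theory, parity, and the virial identity, and then verify it on an a priori interval containing $\alpha_c$---is exactly the paper's approach, and your derivation of
\[
\lambda_1''(\alpha_c) \;\geq\; \frac{2\bigl(\lambda_3(\alpha_c) - \tfrac{7}{3}\lambda_1(\alpha_c)\bigr)}{\lambda_3(\alpha_c) - \lambda_1(\alpha_c)}
\]
(together with the variance identity $\langle u_1,t^4 u_1\rangle - 4\alpha_c^2 = \tfrac{4}{3}\lambda_1(\alpha_c)$) is correct and matches Lemma~\ref{lemodd} and~\eqref{eq:newid2}.

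There are, however, two genuine gaps in the quantitative steps. First, the localization: showing $\lambda_1'(1) > 0$ does \emph{not} confine every critical point to $(0,1)$, since nothing rules out a further critical point at some $\alpha > 1$; and the combination of Gaussian test functions with the virial identity you allude to does not bound $\langle u_1(1),t^2 u_1(1)\rangle$ from above (a trial state bounds $\lambda_1$, not that moment). The paper uses instead an integration-by-parts identity (Lemma~2.4) to prove that \emph{every} critical point of $\lambda_1$ satisfies $\alpha_c^2 < \lambda_1(\alpha_c)$ (Proposition~\ref{lem:abound}); combined with a Gaussian upper bound for $\lambda_1$ this traps $\alpha_c$ in $(0,(24/25)^{1/3})$. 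This inequality is the missing ingredient in your argument.

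Second, your proposed upper bound $\lambda_1(\alpha) \leq \lambda_1(0) + \alpha^2$ is correct but too weak to close the ratio test: near $\alpha = 1$ it only gives $\lambda_1 \lesssim 1.67$ (since $\lambda_1(0) \approx 0.67$), while the best harmonic-oscillator lower bound for $\lambda_3$ on that range is about $2.33$, and $2.33/1.67 < 7/3$. The paper obtains $\lambda_1(\alpha) \leq 0.973$ on the relevant interval by optimizing over the Gaussians $c_\rho e^{-\rho t^2/2}$ (Lemma~\ref{l:upper-lambda1}), and $\lambda_3(\alpha) \geq 2.329$ via the pointwise inequality $(\tfrac{1}{2}t^2-\alpha)^2 \geq \gamma t^2 + \alpha^2 - (\alpha+\gamma)^2$ with $\gamma = 3/5$ (Lemma~\ref{l:lower-lambda3}). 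So your sketch has the right skeleton, but neither the localization step nor the quantitative bounds as written go through.
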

Theorem~\ref{t:lambda1} was conjectured by Montgomery when replacing ``critical point'' by ``minimum''~\cite{Mont}, partially analyzed in~\cite{HM}, stated in Pan-Kwek~\cite{Pan-Kwek}, 
 and verified with a computer assisted\footnote{By numerical
   computations of V. Bonnaillie-No\"el} but  not completely mathematically 
rigorous proof  in~\cite{HK2008a, HK2008b}. One can also find 
 in \cite{FH} a presentation of some of the  results. In the case of the minimum, a complete proof was finally given in \cite{He2008}.  Improvements in the proof and a generalization to a larger class appeared in \cite{HPS}.

Our second main result is a numerically assisted proof of the same property for $\lambda_2$.
 \begin{statement}[Numerically assisted]\label{St02} 
$\lambda_2$ has  a unique critical point $\alpha_{2,c}$. Moreover this critical point is positive, corresponds to  a minimum,
 and is non degenerate. 
\end{statement}

Next, we investigate the case of ``large eigenvalues'', i.e. of $\lambda_j(\alpha)$ for sufficiently large values of $j$. We first provide with a rough asymptotic localization of critical points of $\lambda_j$ for large $j$.
We set 
 $$
 A_{j,c} =\{ \alpha \in \mathbb R\,,\, \lambda'_j(\alpha)=0\} , \quad \text{ for } j \in \N^* .
 $$
 \begin{theorem}\label{th1.4} 
 The function  \begin{equation}
 \label{e:def-F-00}
 (1,+\infty) \ni E  \mapsto F(E)= \int_{0}^{\sqrt{2+2 \sqrt{E}}}\frac{2- x^2}{\sqrt{E-(x^2/2-1)^2}} dx 
\end{equation}
is of class $C^1$ and admits a unique zero $E_c$ in the interval $(1,+\infty)$.
For any choice of $\alpha_{j,c}$ in $A_{j,c}$, we have $ \lim_{j\ar +\infty}\alpha_{j,c} = +\infty$ and
 \begin{equation} \label{lim1}
 \lim_{j\ar +\infty} \lambda_j(\alpha_{j,c})/\alpha_{j,c}^2 = E_c\,.
 \end{equation}
 \end{theorem}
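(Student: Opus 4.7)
My plan splits naturally into an analysis of the function $F$ on one side, and a semiclassical analysis of $\alpha_{j,c}$ on the other.

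To study $F$, I would first desingularize the integral via the successive changes of variables $y = x^2/2 - 1$ and $y = \sqrt{E}\sin\theta$, bringing $F$ to the smooth form
\[
F(E) = -\sqrt{2E} \int_{-\arcsin(1/\sqrt E)}^{\pi/2} \frac{\sin\theta}{\sqrt{\sqrt E \sin\theta + 1}} \, d\theta,
\]
from which $F \in C^1((1,\infty))$ is immediate. The limit $F(E) \to +\infty$ as $E \to 1^+$ comes from the non-integrable singularity of the integrand near $\theta = -\pi/2$ in the limit (where $-\sin\theta > 0$ and the denominator $\to 0$), and $F(E) \to -\infty$ as $E \to +\infty$ from the rescaling $x = E^{1/4} u$ in the original form, which gives $F(E) \sim -C E^{1/4}$ for a positive constant $C$. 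Existence of a zero then follows from the intermediate value theorem. For uniqueness, I would write
\[
F(E) = D(E) \big( 2 - g(E) \big), \quad D(E) = \int_0^{a(E)} \frac{dx}{\sqrt{E-V(x)}}, \quad g(E) = \frac{1}{D(E)} \int_0^{a(E)} \frac{x^2\, dx}{\sqrt{E-V(x)}},
\]
where $g(E) = \int x^2 \, d\mu_E$ is the mean of $x^2$ against the normalized classical invariant measure $d\mu_E$ on $[-a(E), a(E)]$, and prove $g$ is strictly increasing on $(1, \infty)$ by differentiation under the desingularized integral; combined with $g(1^+) = 0$ and $g(+\infty) = +\infty$, this yields a unique $E_c$ with $g(E_c) = 2$, equivalently $F(E_c) = 0$.

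For the second part, I would set up the natural semiclassical rescaling $t = \sqrt\alpha \, x$, which turns the operator into $\mathfrak h _{\mathcal M}(\alpha) = \alpha^2 \mathcal L_h$ with $\mathcal L_h = -h^2 \partial_x^2 + V(x)$, $V(x) = (x^2/2-1)^2$, and $h = \alpha^{-3/2}$. Writing $\lambda_j(\alpha) = \alpha^2 \mu_j(h)$ and denoting by $v_j(\cdot; h)$ the corresponding $L^2$-normalized eigenfunctions of $\mathcal L_h$, the Hellmann--Feynman identity yields
\[
\lambda_j'(\alpha) = 2\alpha - \int t^2 u_j^2 \, dt = \alpha \Big( 2 - \int_{\R} x^2 v_j(x; h)^2 \, dx \Big),
\]
so the critical point condition is exactly $\int_{\R} x^2 v_j(x; h_{j,c})^2\, dx = 2$, with $h_{j,c} := \alpha_{j,c}^{-3/2}$.

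To prove $\alpha_{j,c} \to +\infty$, I would use that $\lambda_j(\alpha) \to +\infty$ at Weyl rate $\lambda_j(\alpha) \sim c_\alpha j^{4/3}$ as $j \to \infty$, combined with semiclassical concentration of $u_j(\alpha)$ on the classical region $\{|t| \lesssim \lambda_j(\alpha)^{1/4}\}$, to deduce $\int t^2 u_j(\alpha)^2 \, dt \to +\infty$ uniformly for $\alpha$ in compacts of $[0, +\infty)$. Hence $\lambda_j'(\alpha) < 0$ on any $[0, \alpha_*]$ for $j$ large, so $A_{j,c} \subset (\alpha_*, +\infty)$, forcing $\alpha_{j,c} \to +\infty$ and $h_{j,c} \to 0$. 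The core semiclassical input is then the standard 1D concentration above the barrier for $\mathcal L_h$: whenever $\mu_j(h) \to E > 1$ along a subsequence $h \to 0$, $|v_j|^2\, dx \rightharpoonup d\mu_E$. Applied to $E_j := \mu_j(h_{j,c}) = \lambda_j(\alpha_{j,c})/\alpha_{j,c}^2$, the critical equation passes to the limit to give $g(E_*) = 2$ along any convergent subsequence with limit $E_* \in (1, +\infty)$, whence $E_* = E_c$ by the first part. The boundary cases are excluded as follows: $E_j \to +\infty$ is ruled out since $g(E_j) \to +\infty \ne 2$; the sub-barrier case $E_j \leq 1$ is ruled out via a WKB/Agmon analysis at the symmetric wells $x = \pm\sqrt 2$ combined with the parity of $v_j$, which gives $\int x^2 v_j^2 \geq 2 + c h_{j,c}$ asymptotically, contradicting the critical equation. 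Therefore $E_j \to E_c$, which is \eqref{lim1}.

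The main obstacle will be the quantitative semiclassical analysis of $\mathcal L_h$ near and across the barrier energy $E = 1$. In particular, the sub-barrier exclusion requires a precise WKB expansion in each well and a careful sign analysis of $\int x^2 v_j^2 - 2$ (going beyond the weak-limit statement), and the above-barrier concentration must be made uniform in $j$ along the sequence $h_{j,c}$ with $\mu_j(h_{j,c})$ varying in a compact subset of $(1, +\infty)$.
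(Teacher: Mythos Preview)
Your overall framework---the semiclassical rescaling to $\mathcal L_h = -h^2\partial_x^2 + (x^2/2-1)^2$, the critical condition $\int x^2 v_j^2 = 2$, and the passage to the limit via the classical invariant measure---is exactly the paper's strategy. Your analysis of $F$ via the substitution $y = x^2/2-1$ followed by a trigonometric change of variable is essentially what the paper does as well. For uniqueness of $E_c$, the paper proceeds slightly differently: rather than proving $g(E) = \langle \mu_E, x^2\rangle$ is strictly increasing, it shows directly that $F' < 0$ on $(1,\infty)$ by writing $F(E) = -\sqrt 2\, E^{1/4}\theta_+(E^{-1/2})$ with $\theta_+(\eta) = \int_{-\eta}^1 \tau(1-\tau^2)^{-1/2}(\tau+\eta)^{-1/2}\,d\tau$ and checking that $\theta_+$ is decreasing via an elementary splitting at $\tau=0$ plus one integration by parts. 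Your $g$-monotonicity route is plausible but would need a comparable calculation, and is not obviously easier.

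There is, however, a genuine error in your sub-barrier step: the inequality has the wrong sign. For $E_j \leq 1$ one has $\int x^2 v_j^2\,dx < 2$ (equivalently $\lambda_j'(\alpha) > 0$), not $\geq 2 + ch$. For $E_j \to E \in (0,1]$ this is already visible from the limit measure: $\langle \mathfrak m_E, x^2\rangle = 2 - \Phi(E) < 2$ for $E \in (0,1)$ since $F>0$ there, and $\mathfrak m_1 = \delta_0$ gives $0$. The delicate case is $E_j \to 0$, where $\mathfrak m_0 = \tfrac12(\delta_{-\sqrt 2}+\delta_{\sqrt 2})$ yields exactly $2$ and one must go to the next order. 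A harmonic-well WKB does produce a $+ch$ contribution from the Gaussian variance, but there is a competing $O(h)$ shift of the center of mass coming from the cubic anharmonicity $V'''(\pm\sqrt 2)\neq 0$, and your outline implicitly drops the latter; the net sign is not obvious from this heuristic. The paper avoids this competition altogether: it uses the exact integration-by-parts identity
\[
2\int_0^\infty (t-\sqrt{2\alpha})\Big(\tfrac{t^2}{2}-\alpha\Big)u_j^2\,dt \;-\; \tfrac{\sqrt\alpha}{\sqrt 2}\,\lambda_j'(\alpha) \;=\; (\lambda_j(\alpha)-\alpha^2)\,u_j(0,\alpha)^2 + u_j'(0,\alpha)^2,
\]
which after rescaling, together with Agmon decay at $x=0$ (classically forbidden for $E<1$) and the positivity of $(t-\sqrt{2\alpha})(t^2/2-\alpha)$ on $\R_+$, yields $\lambda_j'(\alpha) \geq c\,\alpha^{-1/2} > 0$ uniformly over the whole range $E_j \in [0,1)$. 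This identity is the key device your outline is missing for the low-energy exclusion.
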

A numerical investigation shows that $E_c\approx 2.35$, with a relatively fast convergence with respect to $j$, see Section~\ref{s:numerics} for more on numerics.

Finally, our last main result proves the analogue of Theorem~\ref{t:lambda1} for large eigenvalues.
 \begin{theorem}\label{thcj}
 There exists $j_0$ such that for $j\geq j_0$ , $\lambda_j$ admits a unique critical point $\alpha_{j,c}$ corresponding to a non degenerate minimum. Moreover, we have 
 \begin{align}
 \label{e:deriv-infty-1}
 \lambda_j''(\alpha_{j,c}) \to - 3 F'(E_c) G(E_c)>0, \quad \text{ as } j \to + \infty ,
 \end{align}
 where $F'(E_c)<0$, and, setting $V(x) = \left(\frac{x^2}{2}-1\right)^2$ and $x_+(E) =\sqrt{2+2 \sqrt{E}}$,
 \begin{align}
  \label{e:deriv-infty-2}
  G(E) = \Big( \int_{0}^{x_+(E)} \left(E-V(x)\right)^{\frac12}dx \Big)\Big(\int_{0}^{x_+(E)} \left(E-V(x)\right)^{-\frac12}dx\Big)^{-2} .
 \end{align}
 \end{theorem}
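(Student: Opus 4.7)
The plan rests on an exact scaling: with $\hbar:=\alpha^{-3/2}$ and $t=\sqrt\alpha\, x$, one has $\mathfrak h_{\mathcal M}(\alpha)=\alpha^{2}H_{\hbar}$ with $H_{\hbar}=-\hbar^{2}\partial_x^{2}+V(x)$, $V(x)=(x^{2}/2-1)^{2}$, so that $\lambda_{j}(\alpha)=\alpha^{2}\mu_{j}(\hbar)$, where $\mu_{j}(\hbar)$ denotes the $j$-th eigenvalue of $H_{\hbar}$. Differentiating in $\alpha$ and using $\hbar=\alpha^{-3/2}$ gives
$$
\lambda_{j}'(\alpha)=2\alpha\mu_{j}(\hbar)-\tfrac{3}{2}\alpha^{-1/2}\mu_{j}'(\hbar), \qquad \lambda_{j}''(\alpha)=2\mu_{j}(\hbar)-\tfrac{9}{4}\hbar\mu_{j}'(\hbar)+\tfrac{9}{4}\hbar^{2}\mu_{j}''(\hbar),
$$
with primes on $\mu_{j}$ denoting $\hbar$-derivatives. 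Hence the critical point equation $\lambda_{j}'(\alpha)=0$ is equivalent to $\Phi_{j}(\hbar):=\hbar\mu_{j}'(\hbar)-\tfrac{4}{3}\mu_{j}(\hbar)=0$, and at a critical point $\lambda_{j}''(\alpha_{j,c})=-\mu_{j}(\hbar)+\tfrac{9}{4}\hbar^{2}\mu_{j}''(\hbar)$. The problem thus reduces to an asymptotic study of $\mu_{j}(\hbar)$, $\mu_{j}'(\hbar)$, $\mu_{j}''(\hbar)$ as $j\to+\infty$ with $\mu_{j}(\hbar)$ near $E_{c}$.

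The second step is semiclassical analysis above the barrier. Since $V$ has a unique local maximum $V(0)=1$ and $E_{c}>1$, the classical region $\{V\le E\}$ is the single symmetric interval $[-x_{+}(E),x_{+}(E)]$ for $E$ near $E_{c}$; together with the parity decomposition from Proposition~\ref{p:def-lambda}, standard Bohr--Sommerfeld gives
$$
S(\mu_{j}(\hbar))=2\pi\hbar N_{j}+O(\hbar^{2}), \qquad S(E):=4\int_{0}^{x_{+}(E)}\sqrt{E-V(x)}\,dx, \quad N_{j}=j+O(1),
$$
uniformly on compacts of $(1,+\infty)$. The crucial step -- the ``method permitting to differentiate with respect to the semiclassical parameter'' announced in the introduction -- is to upgrade this into controlled asymptotics for the derivatives,
$$
\hbar\mu_{j}'(\hbar)=\frac{S(\mu_{j})}{S'(\mu_{j})}+o(1), \qquad \hbar^{2}\mu_{j}''(\hbar)=-\frac{S(\mu_{j})^{2}\,S''(\mu_{j})}{S'(\mu_{j})^{3}}+o(1),
$$
as $j\to+\infty$. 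This is the main obstacle: an asymptotic expansion of $\mu_{j}(\hbar)$ does not automatically pass to expansions of $\mu_{j}'$ and $\mu_{j}''$. One natural route is to build WKB quasimodes with explicit smooth $\hbar$-dependence, combined with a bound on the restricted resolvent $(H_{\hbar}-\mu_{j})^{-1}|_{\tilde u_{j}^{\perp}}$ (whose norm is $O(\hbar^{-1})$ thanks to the spectral gap), and then differentiate the quasimode relation in $\hbar$; alternatively, exploit analyticity of $\hbar\mapsto\mu_{j}(\hbar)$ and apply Cauchy estimates on a suitable contour.

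The reduction to $F$ is then purely algebraic. The identity
$$
\frac{d}{dx}\Big[x\sqrt{E-V(x)}\Big]=\frac{(E-3V(x))+(2-x^{2})}{\sqrt{E-V(x)}},
$$
(immediate from $V=(x^{2}/2-1)^{2}$), integrated on $(0,x_{+}(E))$ with vanishing boundary terms, yields $F(E)=E\,S'(E)-\tfrac{3}{4}S(E)$. Substituting the first displayed expansion gives $\Phi_{j}(\hbar)=-\tfrac{4F(\mu_{j}(\hbar))}{3S'(\mu_{j}(\hbar))}+o(1)$. Uniqueness of $\alpha_{j,c}$ for $j$ large then follows from Theorem~\ref{th1.4} (unique simple zero $E_{c}$ of $F$ in $(1,+\infty)$, with $F'(E_{c})<0$) together with the strict monotonicity $\mu_{j}'(\hbar)=2\hbar\|\partial_{x}\tilde u_{j}\|^{2}>0$ (Feynman--Hellmann applied to $H_{\hbar}$), which ensures $\Phi_{j}$ crosses zero at most once where $\mu_{j}$ lies near $E_{c}$; the range where $\mu_{j}(\hbar)$ is bounded away from $E_{c}$ is handled separately using $F(1^{+})=+\infty$ and $F(+\infty)=-\infty$. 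Finally, substituting the second displayed expansion into $\lambda_{j}''(\alpha_{j,c})$ and using $S(E_{c})=\tfrac{4}{3}E_{c}S'(E_{c})$, $F'(E)=\tfrac{1}{4}S'(E)+E S''(E)$, and $G(E)=S(E)/S'(E)^{2}$, a short computation identifies the limit as $-3F'(E_{c})G(E_{c})$, and non-degeneracy of the minimum follows from $F'(E_{c})<0$ and $G(E_{c})>0$, proving~\eqref{e:deriv-infty-1}.
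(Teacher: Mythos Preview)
Your strategy is essentially the same as the paper's: rescale to the semiclassical operator $P_h=-h^2\partial_x^2+V$, express $\lambda_j''(\alpha)$ in terms of $h$-derivatives of semiclassical data, and then argue that these $h$-derivatives can be computed from (differentiated) Bohr--Sommerfeld. The paper phrases the second-derivative formula via $\mathfrak M_{2-x^2}^j(h)=\int(2-x^2)|v_h^j|^2$ (Lemma~\ref{l:derivee-seconde-h}) rather than via $\mu_j',\mu_j''$ directly, but this is the same content as your relation $\lambda_j''=2\mu_j-\tfrac94 h\mu_j'+\tfrac94 h^2\mu_j''$ (incidentally, Remark~\ref{r:eigenvalues-semiclass-1} has a typo: the coefficient of $h^2E''$ should be $\tfrac94$, as you have, not $\tfrac14$). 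Your algebraic identity $F(E)=ES'(E)-\tfrac34 S(E)$ is a nice shortcut; the paper achieves the same reduction implicitly through $\Phi=CF$ and the relation $\mathcal E'=(\pi C)^{-1}$.

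The one point where your write-up remains a genuine sketch is exactly the one you flag: passing from $S(\mu_j(\hbar))=2\pi\hbar N_j+O(\hbar^2)$ to controlled asymptotics for $\hbar\mu_j'$ and $\hbar^2\mu_j''$. The paper devotes all of Section~\ref{s5} to this. Your first suggested route---build WKB quasimodes with smooth $\hbar$-dependence and use the $O(\hbar^{-1})$ resolvent bound on the orthogonal complement of the eigenfunction---is precisely what is implemented there (Propositions~\ref{l:appeig} and~\ref{p:appeig-asspts}, then Corollary~\ref{c:combine-tout} via Lagrangian distributions). Your alternative (Cauchy estimates from analyticity in $\hbar$) is plausible but would require uniform control of $\mu_j$ on a complex $\hbar$-neighbourhood, which is not established here. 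Finally, your uniqueness argument via monotonicity of $\mu_j$ and the sign change of $F$ is more delicate than needed: once you know $\lambda_j''(\alpha_{j,c})>0$ at \emph{every} critical point (all of which lie near $E_c$ by Theorem~\ref{th1.4}), uniqueness is immediate since a smooth function going to $+\infty$ at both ends cannot have two strict local minima without a local maximum in between---this is how the paper concludes.
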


Theorems~\ref{t:lambda1} and~\ref{thcj} together with Statement~\ref{St02} and Figure~\ref{f:graphe-6-vp} naturally lead us to the following conjecture.
\begin{conj}\label{conj1}
For any $j\geq 1$, $\lambda_j$ has  a unique critical point $\alpha_{j,c}$. Moreover this critical point is positive, corresponds to  a minimum,
 and is non degenerate. 
\end{conj}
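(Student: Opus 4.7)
My plan builds on the semiclassical reformulation used to prove Theorem~\ref{th1.4}. The rescaling $t=\sqrt{\alpha}\,x$ (valid for $\alpha>0$) identifies $\alpha^{-2}\,\mathfrak h_{\mathcal M}(\alpha)$ with the semiclassical operator $P_h := -h^2\partial_x^2 + V(x)$, $V(x) = (x^2/2-1)^2$, where $h=\alpha^{-3/2}$. Writing $\mu_j(h)$ and $v_j(\cdot;h)$ for its $j$-th eigenvalue and normalized real eigenfunction, we have $\lambda_j(\alpha)=\alpha^2 \mu_j(h)$. The Feynman--Hellmann identity then yields
\begin{equation*}
\lambda_j'(\alpha) = \alpha\bigl(2-\Psi_j(h)\bigr),\qquad \Psi_j(h):=\int_{\R} x^2 v_j(x;h)^2\,dx,
\end{equation*}
so that critical points of $\lambda_j$ correspond to solutions of $\Psi_j(h)=2$. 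Differentiating once more in $\alpha$ and using $dh/d\alpha=-\tfrac{3}{2}\alpha^{-5/2}$, at any such critical point
\begin{equation*}
\lambda_j''(\alpha_{j,c}) \;=\; \tfrac{3}{2}\,h_{j,c}\,\Psi_j'(h_{j,c}),\qquad h_{j,c}:= \alpha_{j,c}^{-3/2}.
\end{equation*}
The whole problem thus reduces to deriving precise semiclassical asymptotics for $\Psi_j(h)$ and $\Psi_j'(h)$ in the regime $h\to 0$ with $\mu_j(h)$ approaching $E_c$, which lies strictly above the barrier $V(0)=1$.

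The next idea is to encode $\Psi_j(h)$ and $\Psi_j'(h)$ as (mixed) derivatives of eigenvalues of the two-parameter operator $P_{h,\epsilon} := -h^2\partial_x^2 + V + \epsilon x^2$. Denoting its $j$-th eigenvalue by $\mu_j^\epsilon(h)$, Feynman--Hellmann in $\epsilon$ gives $\Psi_j(h) = \partial_\epsilon \mu_j^\epsilon(h)|_{\epsilon=0}$, hence $\Psi_j'(h) = \partial_h\partial_\epsilon \mu_j^\epsilon(h)|_{\epsilon=0}$. Since $E_c > 1$, the above-barrier Bohr--Sommerfeld quantization rule
\begin{equation*}
\mathcal A_\epsilon\bigl(\mu_j^\epsilon(h)\bigr) \;=\; 2\pi h\bigl(j-\tfrac 12\bigr) + O(h^2),\qquad \mathcal A_\epsilon(\mu) := 4\int_0^{x_+^\epsilon(\mu)} \sqrt{\mu - V - \epsilon x^2}\,dx,
\end{equation*}
applies uniformly for $\mu$ in a compact subinterval of $(1,+\infty)$ containing $E_c$ and $\epsilon$ small. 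Implicit differentiation in $\epsilon$ then yields $\Psi_j(h) = \Psi_\infty(\mu_j(h)) + o(1)$ with $\Psi_\infty(E) := 2 - F(E)/B(E)$ and $B(E):= \int_0^{x_+}(E-V)^{-1/2}\,dx$, so that $\Psi_j(h)=2$ is equivalent to $F(\mu_j(h))= o(1)$ at leading order, consistent with Theorem~\ref{th1.4}. A further implicit differentiation in $h$ (justified by the smoothness of the BS expansion as a function of $h^2$) produces an analogous asymptotic formula for $\Psi_j'(h)$.

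By Theorem~\ref{th1.4}, any critical point $\alpha_{j,c}$ satisfies $\mu_j(h_{j,c}) \to E_c$, so the above expansions apply; since $F'(E_c)\neq 0$, the implicit function theorem gives local uniqueness in a neighborhood of $E_c$, and Theorem~\ref{th1.4} upgrades this to global uniqueness for $j \geq j_0$. For the second derivative, using $\Psi_\infty'(E_c) = -F'(E_c)/B(E_c)$ (because $F(E_c)=0$) together with the semiclassical BS-derivative identity $h\mu_j'(h) \sim \mathcal A(\mu_j)/\mathcal A'(\mu_j)$ evaluated at the critical point where $\mathcal A(E_c)/\mathcal A'(E_c) = \tfrac 43 E_c$ (again equivalent to $F(E_c)=0$), one obtains
\begin{equation*}
\lambda_j''(\alpha_{j,c}) \;\longrightarrow\; \tfrac{3}{2}\cdot\Bigl(-\tfrac{F'(E_c)}{B(E_c)}\Bigr)\cdot\tfrac{4 E_c}{3} \;=\; -\tfrac{2 E_c F'(E_c)}{B(E_c)} \;=\; -3 F'(E_c) G(E_c),
\end{equation*}
the last equality using $G=A/B^2$ and the critical-point relation $2 E_c B(E_c) = 3 A(E_c)$. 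Positivity follows from $G(E_c)>0$ and from $F'(E_c)<0$ (proved via the signs of $F$ at the endpoints of $(1,+\infty)$ together with monotonicity, as part of Theorem~\ref{th1.4}). Since $\lambda_j\to +\infty$ at $\pm\infty$, this unique non-degenerate critical point is necessarily the global minimum.

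The main obstacle is precisely the rigorous passage from leading-order Bohr--Sommerfeld asymptotics to asymptotics for \emph{derivatives} of eigenvalues with respect to the semiclassical parameter $h$: because $h$ multiplies the highest-order derivative in $P_h$, the standard pointwise $O(h^N)$ remainder estimates cannot be differentiated naively in $h$. Rigorously justifying this controlled $h$-differentiability, jointly in $(h,\epsilon)$, is the technical innovation advertised in the abstract as the ``method permitting to differentiate with respect to the semiclassical parameter''.
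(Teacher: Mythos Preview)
The statement you are attempting to prove is labelled as a \emph{conjecture} in the paper and is not proved there. The paper establishes it for $j=1$ (Theorem~\ref{t:lambda1}), gives a numerically assisted argument for $j=2$ (Statement~\ref{St02}), and proves it for all $j\geq j_0$ sufficiently large (Theorem~\ref{thcj}), but explicitly notes that finitely many intermediate values of $j$ remain uncovered.

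Your argument, as you yourself write, only yields uniqueness and non-degeneracy ``for $j\geq j_0$'': the semiclassical Bohr--Sommerfeld machinery you invoke requires $h=\alpha_{j,c}^{-3/2}\to 0$, hence $j\to\infty$, and Theorem~\ref{th1.4} only localizes $\mu_j(h_{j,c})$ near $E_c$ asymptotically. So your proposal is a proof plan for Theorem~\ref{thcj}, not for Conjecture~\ref{conj1}. The genuine gap is precisely the one the paper leaves open: for fixed small $j$ (say $3\leq j<j_0$) no semiclassical argument applies, and neither your approach nor the paper's supplies a replacement.

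For the large-$j$ part your route is close in spirit to the paper's but uses a different technical device. The paper constructs approximate eigenfunctions as semiclassical Lagrangian distributions, proves abstract comparison lemmas (Propositions~\ref{l:appeig} and~\ref{p:appeig-asspts}) guaranteeing that $h$-derivatives of the approximate eigenfunctions remain $\mathcal O(h^\infty)$-close to those of the true eigenfunctions, and then extracts $\lim_j\lambda_j''(\alpha_{j,c})$ from a product-of-Lagrangian-states expansion (Proposition~\ref{e:product-SLD} and Corollary~\ref{c:combine-tout}). Your two-parameter family $P_{h,\epsilon}$ encodes $\Psi_j$ as $\partial_\epsilon\mu_j^\epsilon|_{\epsilon=0}$, elegantly trading an expectation value for a mixed eigenvalue derivative; but the hard step --- justifying that the $O(h^2)$ Bohr--Sommerfeld remainder can be differentiated in $h$ with controlled error --- is exactly what the paper's Lagrangian-distribution construction is designed to handle, and your plan defers this as the ``technical innovation'' without supplying the argument. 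The two approaches thus share the same analytic core and the same obstacle; the paper carries it out in detail, while your sketch identifies the obstacle correctly but does not overcome it.
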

 Note that $A_{j,c}\neq \emptyset$ because we will prove in \eqref{uniqmin} that for any $j$, $\lambda_j(\alpha)$ has at least a minimum. Conjecture \ref{conj1} says in particular that $A_{j,c}$ has precisely one element for all $j \in \N^*$.
  Remark also that, on account to Theorem~\ref{thcj}, there is only a finite number of eigenvalues which remain uncovered.

\bigskip
As in~\cite[Section~2]{BBGL}, there are different regions of the space of $(j,\alpha) \in \N \times \R$ to be considered separately. Here, it is rather straightforward to see that $\lambda_j$ has no critical point in $\{\alpha \leq 0\}$ (see Corollary~\ref{c:decreasing-neg}) so there are only two relevant regimes to consider.

The first regime is $\lambda_j(\alpha)\gg \alpha^2$, in which the operator $\mathfrak h _{\mathcal M}(\alpha)$ may be rescaled to (a small perturbation of) the operator $P^\eps= -\eps^2 \frac{d^2}{ds^2} + \frac{s^4}{4}$. In this regime, we prove  in Section~\ref{s:regime-1} that there is no critical point asymptotically. The proof relies on a description of semiclassical measures associated to eigenfunctions of the operator $P^\eps$ following~\cite{LL:22}.

The second regime is $\lambda_j(\alpha)\lesssim \alpha^2$, in which the operator $\mathfrak h _{\mathcal M}(\alpha)$ may be rescaled to the double-well semiclassical Schr\"odinger operator $P_h=-h^2 \frac{d^2}{dx^2} + \left(\frac{x^2}{2}-1\right)^2$. It appears that all critical points belong asymptotically to this regime and a thorough semiclassical analysis is needed to first localize critical points, and then prove their non-degeneracy. This is the heart of the proofs of Theorems~\ref{th1.4} and~\ref{thcj} achieved respectively in Sections~\ref{s:semiclassique} and~\ref{s5}.
The proof of Theorem~\ref{th1.4} relies on the description of semiclassical measures associated to eigenfunctions of the operator $P_h$  at all possible energy levels $E\in \R_+$ (and the semiclassical reformulation of the first variation formula for eigenvalues). This allows to provide in particular with an asymptotics of $\lambda_j'(\alpha)$ when $\frac{\lambda_j(\alpha)}{\alpha^2}\to E$ (see Corollary~\ref{cor3.4} and Proposition~\ref{p:bottom}). We then prove that the latter asymptotics vanishes if and only if $E=E_c$.

The description of semiclassical measures is unfortunately not sufficient for the analysis of  the second derivative $\lambda_j''(\alpha)$, and thus for  the proof of  Theorem~\ref{thcj}. Luckily enough, the energy $E_c$ is a regular energy for the potential $\left(\frac{x^2}{2}-1\right)^2$ since $E_c>1$, and we can push the semiclassical analysis beyond semiclassical measures at the energy $E_c$. To this aim, we use an approximation of eigenfunctions by semiclassical Lagrangian distributions and prove that derivatives of the approximate and the true eigenfunctions are close one to the other in the semiclassical limit. 
We refer to the beginning of Section~\ref{s5} for a detailed description of the proof of Theorem~\ref{thcj}.

Finally, the proof of Theorem~\ref{t:lambda1} is  completed in Section~\ref{s:first-eig}. It relies on several refinements of the proof in~\cite{He2008}. In particular, we first obtain a refined estimate for the localization of possible critical points of $\lambda_1$. Then, we prove an upper bound for $\lambda_1(\alpha)$ and lower bound for $\lambda_3(\alpha)$ which yield a positive lower bound for $\lambda_1''$ in this interval. The estimates on $\lambda_1$ and $\lambda_3$ follow by comparing the operator $\mathfrak h _{\mathcal M}(\alpha)$ with various harmonic oscillators.
Numerical experiments performed by M.~Persson Sundqvist are presented in Subsection~\ref{s:numerics} and allow to both check the accuracy of the results we obtain and prove Statement~\ref{St02}.

\bigskip 
 {\bf  Acknowledgements.}
 Many thanks to F. Nicoleau and M. Persson Sundqvist for their precious help for numerical computations with Matematica. We also thank D. Robert for useful discussions on the possible use of coherent states in semiclassical analysis and Y. Colin de Verdi\`ere for mentioning to us the results of \cite{CdV}. The second author is partially supported by the Agence Nationale de la Recherche under grants SALVE (ANR-19-CE40-0004) and ADYCT (ANR-20-CE40-0017).

\section{Preliminaries}\label{s2}
\subsection{Classical results}\label{ss2.1}
We first collect various known basic results, see~\cite{Mont, HM, Pan-Kwek, HK2008a,BBGL}. Firstly, 
\begin{equation}
\label{e:infty-a-infty}
\text{for all } j \in \N^*, \mbox{ the map }\alpha \mapsto \lambda_j(\alpha) \mbox{ is continuous
 and satisfies}
\lim_{|\alpha|\ar +\infty} \lambda_j(\alpha) =+\infty\,.
\end{equation}
When $\alpha \rightarrow -\infty$, this is a consequence of the direct bound $\lambda_1(\alpha)\geq \alpha^2$ if $\alpha<0$. 
When $\alpha \to + \infty$, this results from standard semiclassical analysis (see Section~\ref{s:semiclassique} below) relative to the operator $P_h$, yielding, for all $k \in \N$,
   \begin{align}\label{eq:asba}
&  \lambda_{2k+1} (\alpha) \sim  \sqrt{2}  ( 2k+1)  \sqrt{ \alpha}\,,\quad \mbox{ as } \alpha \ar +\infty\, ,  \\
\label{eq:asbb}
&   \lambda_{2k+2}(\alpha) -  \lambda_{2k+1}(\alpha) = \mathcal O (\alpha^{-\infty}), \quad  \mbox{ as } \alpha \ar +\infty\,.
   \end{align}
The $\mathcal O (\alpha^{-\infty})$ decay (actually, this is an exponential decay, but this will not be used here) of $\lambda_{2k+2}(\alpha) -  \lambda_{2k+1}(\alpha)$ is explained by a double well analysis of the problem (see \cite{HeSj}, \cite{He88} or \cite{DiSj}).
As a consequence, for any $j \in \N^*$, 
\begin{equation}\label{uniqmin}
\alpha \mapsto \lambda_j(\alpha) \mbox{ admits at
  least one minimum }\,.
\end{equation}
We recall the first variation formula for the eigenvalues $\lambda_j$.
\begin{prop}[Feynmann-Hellmann formula]
The map $\alpha \mapsto \lambda_j(\alpha)$ is $C^\infty$, the map $\alpha \mapsto u_j(\cdot, \alpha), \R \to D(\mathfrak h _{\mathcal M}(0))$ is of class $C^1$ and 
\begin{align}
&\left( \d_\alpha u_j(\cdot , \alpha) , u_j(\cdot , \alpha) \right)_{L^2(\R)} = 0,  \nonumber\\
\label{FeHei}
&\lambda'_j(\alpha) = - 2 \int_\R \left(\frac 12 t^2 -\alpha\right) u_j(t,\alpha)^2\, dt =  -4 \int_0^{+\infty} \left(\frac 12 t^2 -\alpha\right)\;
u_j(t,\alpha)^2\, dt\;.
\end{align}
\end{prop}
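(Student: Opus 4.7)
The plan is to view $\{\mathfrak h_\mathcal{M}(\alpha)\}_{\alpha \in \R}$ as a real-analytic self-adjoint family of type~(A) in the sense of Kato, and then derive \eqref{FeHei} by the standard Feynman-Hellmann computation. The starting point is the decomposition
\begin{equation*}
\mathfrak h_\mathcal{M}(\alpha) = \mathfrak h_\mathcal{M}(0) - \alpha t^2 + \alpha^2.
\end{equation*}
Multiplication by $t^2$ is symmetric and $\mathfrak h_\mathcal{M}(0)$-bounded with relative bound zero: on $D(\mathfrak h_\mathcal{M}(0))$ one has $\|t^2 u\|_{L^2}^2 \lesssim \|\mathfrak h_\mathcal{M}(0) u\|_{L^2}\|u\|_{L^2} + \|u\|_{L^2}^2$, obtained by expanding $\mathfrak h_\mathcal{M}(0) = -\partial_t^2 + \frac{1}{4}t^4$ and integrating by parts. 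Hence $V_\alpha := -\alpha t^2 + \alpha^2$ depends polynomially on $\alpha$ with coefficients that are $\mathfrak h_\mathcal{M}(0)$-bounded with relative bound zero, and Kato's theorem (see Kato, \emph{Perturbation theory for linear operators}, Chap.~VII) implies that $\mathfrak h_\mathcal{M}(\alpha)$ is a self-adjoint holomorphic family of type~(A) on a complex neighborhood of $\R$. Since each $\lambda_j(\alpha)$ is simple by Proposition~\ref{p:def-lambda}, it is real-analytic (hence $C^\infty$) in $\alpha$, and the rank-one spectral projector $\Pi_j(\alpha)$ also depends analytically on $\alpha$.

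To obtain a $C^1$ selection of unit eigenfunctions $\alpha \mapsto u_j(\cdot, \alpha)$ taking values in $D(\mathfrak h_\mathcal{M}(0))$, I would fix a base point $\alpha_0$, set $\tilde u_j(\cdot, \alpha) := \Pi_j(\alpha) u_j(\cdot, \alpha_0)$ for $\alpha$ close to $\alpha_0$ (so that $\tilde u_j$ does not vanish), and then normalize $u_j(\cdot, \alpha) := \tilde u_j(\cdot, \alpha)/\|\tilde u_j(\cdot, \alpha)\|_{L^2}$. This local selection is real-valued and of the parity prescribed by Proposition~\ref{p:def-lambda}, since both properties are preserved by $\Pi_j(\alpha)$. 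These local selections can be patched together on $\R$ into a globally real-analytic (in particular $C^1$) choice, because the eigenvalues never cross and $\Pi_j(\alpha)$ is globally analytic; that $u_j(\cdot,\alpha) \in D(\mathfrak h_\mathcal{M}(0))$ follows from item~(1) of Proposition~\ref{p:def-lambda}.

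With such a selection in hand, differentiating the eigenvalue equation $\mathfrak h_\mathcal{M}(\alpha) u_j(\cdot, \alpha) = \lambda_j(\alpha) u_j(\cdot, \alpha)$ in $\alpha$ gives
\begin{equation*}
(\partial_\alpha V_\alpha) u_j + \mathfrak h_\mathcal{M}(\alpha)\, \partial_\alpha u_j = \lambda_j'(\alpha) u_j + \lambda_j(\alpha)\, \partial_\alpha u_j,
\end{equation*}
with $\partial_\alpha V_\alpha = -2\left(\frac{1}{2} t^2 - \alpha\right)$. Taking the $L^2(\R)$ scalar product with $u_j(\cdot, \alpha)$ and using self-adjointness to move $\mathfrak h_\mathcal{M}(\alpha)$ onto $u_j$ makes the terms in $\partial_\alpha u_j$ cancel, yielding the first identity in \eqref{FeHei}. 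The orthogonality $(\partial_\alpha u_j, u_j)_{L^2} = 0$ comes from differentiating the normalization $\|u_j(\cdot, \alpha)\|_{L^2}^2 = 1$ and using that $u_j$ is real; the second identity in \eqref{FeHei} follows from the parity property (item~(\ref{parity}) of Proposition~\ref{p:def-lambda}), which makes $u_j(\cdot, \alpha)^2$ even. The only mildly delicate point is the relative boundedness estimate justifying the Kato framework; once that is in place, the rest of the argument is routine.
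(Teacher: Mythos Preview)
Your argument is correct and is the standard route via Kato's analytic perturbation theory for type-(A) families; the relative-bound-zero estimate for $t^2$ follows immediately from the form identity $(\mathfrak h_\mathcal{M}(0)u,u)=\|\partial_t u\|^2+\tfrac14\|t^2u\|^2$, as you indicate. The paper does not actually give a proof of this proposition beyond the one-line remark that the second equality in \eqref{FeHei} follows from parity of $u_j(\cdot,\alpha)^2$; the smoothness of $\lambda_j$, the $C^1$ dependence of $u_j$, and the Feynman--Hellmann identity itself are treated as classical and simply stated, so your write-up supplies precisely the details the paper omits.
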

 Note that the second equality in~\eqref{FeHei} simply follows from the fact, stated in Proposition~\ref{p:def-lambda}, that eigenfunctions are either odd or even, which implies  $t\mapsto \left(\frac 12 t^2 -\alpha\right)
u_j(t,\alpha)^2$ is even. 

In particular, this implies~:
\begin{cor}
\label{c:decreasing-neg} For all $j \in \N^*$, the function $\alpha\mapsto \lambda_j(\alpha)$ is strictly decreasing
 for $\alpha \leq 0$ and all  critical points  of $\lambda_j$ are in $]0,+\infty[$. 
In addition, $\lambda_j$ admits a strictly positive 
 minimum which can only be attained for positive $\alpha$'s.
\end{cor}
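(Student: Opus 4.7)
The proof will be an almost immediate consequence of the Feynman-Hellmann formula~\eqref{FeHei} together with the spectral properties collected in Proposition~\ref{p:def-lambda} and the asymptotics~\eqref{e:infty-a-infty}. The plan is to exploit the sign of the weight $\frac12 t^2 - \alpha$ on the half-line $[0,+\infty)$ when $\alpha\leq 0$, and then combine the resulting monotonicity with the coercivity at infinity to locate the minimum.

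First I would fix $\alpha \leq 0$ and look at the rewritten formula
\begin{equation*}
\lambda'_j(\alpha) = -4 \int_0^{+\infty} \Big(\tfrac 12 t^2 - \alpha\Big)\, u_j(t,\alpha)^2\, dt .
\end{equation*}
On the integration range, the weight $\tfrac12 t^2 - \alpha$ is nonnegative, and strictly positive for $t>0$ (even when $\alpha=0$, it vanishes only at $t=0$). By Proposition~\ref{p:def-lambda}(\ref{regularity}), $u_j(\cdot,\alpha)$ is real-analytic and not identically zero, so its zero set on $(0,+\infty)$ is discrete and in particular of Lebesgue measure zero. Consequently the integrand is strictly positive on a set of positive measure, which forces $\lambda'_j(\alpha) < 0$ for every $\alpha \leq 0$. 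Hence $\alpha \mapsto \lambda_j(\alpha)$ is strictly decreasing on $(-\infty,0]$, and in particular it has no critical point there; all critical points must therefore lie in $(0,+\infty)$.

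For the second assertion, by~\eqref{uniqmin} (or directly by combining continuity of $\lambda_j$ with $\lim_{|\alpha|\to+\infty}\lambda_j(\alpha)=+\infty$ from~\eqref{e:infty-a-infty}), the function $\lambda_j$ attains its infimum at some point $\alpha^\star \in \R$. Any such global minimizer is a critical point, hence lies in $(0,+\infty)$ by the first part. Finally, Proposition~\ref{p:def-lambda}(\ref{eigenvalue-ordering}) states that every eigenvalue $\lambda_j(\alpha)$ is strictly positive, so $\min_\R \lambda_j = \lambda_j(\alpha^\star) > 0$. There is no real obstacle here; the only subtle point is to justify that the integrand in~\eqref{FeHei} does not vanish a.e., and this is handled by the real-analyticity stated in Proposition~\ref{p:def-lambda}(\ref{regularity}).
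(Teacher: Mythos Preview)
Your proof is correct and follows essentially the same route as the paper, namely reading off the sign of $\lambda_j'(\alpha)$ directly from the Feynman--Hellmann formula~\eqref{FeHei}. The paper's version is marginally more direct: expanding the full-line integral and using $\|u_j(\cdot,\alpha)\|_{L^2}=1$ gives $\lambda_j'(\alpha) = -\|t\,u_j(\cdot,\alpha)\|_{L^2}^2 + 2\alpha$, which is manifestly negative for $\alpha\le 0$ without needing to invoke real-analyticity of the eigenfunction.
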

\begin{proof}
Recalling that $u_j(\cdot,\alpha)$ is normalized in $L^2(\R)$, Equation~\eqref{FeHei} rewrites as 
$$
\lambda'_j(\alpha) = -  \|tu_j(\cdot,\alpha) \|_{L^2(\R)}^2 +2 \alpha ,
$$
As a consequence, $\lambda'_j(\alpha) <0$ for all $\alpha < 0$ and $\lambda'_j(0) = -  \|tu_j(\cdot,0) \|_{L^2(\R)}^2 <0$ since $u_j(\cdot,\alpha)$ is normalized in $L^2(\R)$.
\end{proof}

Observing that the first eigenfunction is even, we deduce classically the following lemma (see for example \cite{Pan-Kwek} for a proof).
\begin{lemma}
For all $k \in \N^*, \alpha \in \R$, we have 
$$
\lambda_{2k-1}(\alpha)= \lambda_k^{N}(\alpha) , \quad \lambda_{2k}(\alpha)= \lambda_k^{D}(\alpha) ,
$$
where $\lambda_k^{N}(\alpha)$ (resp. $\lambda_k^{D}(\alpha)$) denotes the $k-$th eigenvalue of the Neumann (resp. Dirichlet) realization  of the differential operator 
 $-\frac{d^2}{dt^2} 
 +(\frac 12 t^2 -\alpha)^2$
 in $\mathbb R^+$.
\end{lemma}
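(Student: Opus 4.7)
The plan is to exploit the parity symmetry of the potential $t \mapsto (\frac12 t^2 - \alpha)^2$, which is even, so that $\mathfrak h_{\mathcal M}(\alpha)$ commutes with the parity involution $\sigma: u(t)\mapsto u(-t)$. Consequently, the orthogonal decomposition
\[
L^2(\R) = L^2_{\text{even}}(\R)\oplus L^2_{\text{odd}}(\R)
\]
reduces the operator. First I would verify that each summand is invariant under $\mathfrak h_{\mathcal M}(\alpha)$, which follows from the fact that $D(\mathfrak h_{\mathcal M}(\alpha))$ (which equals $D(\mathfrak h_{\mathcal M}(0))$ by Proposition~\ref{p:def-lambda}) is stable by $\sigma$, so that $\mathfrak h_{\mathcal M}(\alpha)$ splits as a direct sum of two self-adjoint operators.

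Second, I would identify each piece with the Neumann/Dirichlet realization on $\R^+$. Consider the unitary map
\[
U : L^2_{\text{even}}(\R) \to L^2(\R^+), \quad (Uu)(t) = \sqrt{2}\, u(t)\,,
\]
and similarly from $L^2_{\text{odd}}(\R)$ to $L^2(\R^+)$. Since an even $H^2_{\mathrm{loc}}$ function on $\R$ corresponds exactly to a function on $\R^+$ with $u'(0)=0$ (and conversely, any Neumann function on $\R^+$ extends by reflection to an even $H^2(\R)$ function), one checks that $U$ is a unitary equivalence between $\mathfrak h_{\mathcal M}(\alpha)|_{L^2_{\text{even}}}$ and the Neumann realization on $\R^+$. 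The analogous statement (odd functions vanish at $0$ and extend to antisymmetric $H^2$ functions) gives the identification with the Dirichlet realization on the odd subspace. I would briefly justify these two identifications through the quadratic forms associated with $\mathfrak h_{\mathcal M}(\alpha)$, checking the boundary conditions induced by the extension/restriction procedure.

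Third, I would invoke item~(\ref{parity}) of Proposition~\ref{p:def-lambda}: the eigenfunction associated with $\lambda_j(\alpha)$ has the parity of $j+1$. Hence $\lambda_{2k-1}(\alpha)$ is an eigenvalue of the even part (i.e.\ a Neumann eigenvalue) and $\lambda_{2k}(\alpha)$ is an eigenvalue of the odd part (i.e.\ a Dirichlet eigenvalue). Combining with the strict ordering $\lambda_1(\alpha)<\lambda_2(\alpha)<\cdots$ and the fact that the full spectrum is the disjoint union (with multiplicities) of the Neumann and Dirichlet spectra, the two sequences $(\lambda_{2k-1})_k$ and $(\lambda_{2k})_k$ must respectively enumerate $\Sp(H^N_\alpha)$ and $\Sp(H^D_\alpha)$ in increasing order, giving $\lambda_k^N(\alpha)=\lambda_{2k-1}(\alpha)$ and $\lambda_k^D(\alpha)=\lambda_{2k}(\alpha)$.

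The only subtle step is the second one, namely checking the bijective correspondence between even/odd elements of $D(\mathfrak h_{\mathcal M}(\alpha))$ and the Neumann/Dirichlet form domains on $\R^+$; but this is a standard reflection argument and is essentially immediate once one notes that all eigenfunctions are (real-analytic and hence) $C^\infty$ at $0$ (cf.\ Proposition~\ref{p:def-lambda}~(\ref{regularity})), so that the induced boundary conditions $u'(0)=0$ (resp.\ $u(0)=0$) on the even (resp.\ odd) part are genuinely the Neumann (resp.\ Dirichlet) conditions.
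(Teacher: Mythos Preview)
Your proposal is correct and follows exactly the classical parity-decomposition argument the paper alludes to (the paper itself does not give a proof, only remarks that ``observing that the first eigenfunction is even, we deduce classically the following lemma'' and refers to \cite{Pan-Kwek}). Your identification of the even/odd subspaces with the Neumann/Dirichlet realizations via the scaled restriction map, together with the parity information from Proposition~\ref{p:def-lambda}~(\ref{parity}), is precisely the standard route.
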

In particular, we shall use 
\begin{equation}
\lambda_1(\alpha)= \lambda_1^{N}(\alpha), \quad  \lambda_3(\alpha)= \lambda_2^{N}(\alpha)\;.
\end{equation}

\subsection{Dilation operators}
In the present article, we shall make an intensive use of dilation operators.
 We define for $\eta >0$ the following unitary (dilation) operator 
 \begin{equation}
 \label{e:Tnu}
\begin{array}{rcl}
T_\eta : L^2(\R)& \to & L^2(\R) , \\
u(x)& \mapsto &\eta^{\frac{1}{2}}u(\eta x) ,
\end{array}
\end{equation}
having adjoint/inverse $T_\eta^*=T_\eta^{-1}=T_{\eta^{-1}}$.

\subsection{Additional properties at a critical point}\label{ss2.2}
Of course, multiplying the eigenvalue equation 
\begin{align}
\label{e:eingenvalue-eqn}
\mathfrak h _{\mathcal M}(\alpha)u (\cdot,\alpha) = \lambda(\alpha) u(\cdot,\alpha) , \quad \| u(\cdot,\alpha)\|_{L^2(\R)}=1 ,
\end{align}
by $u(\cdot,\alpha)$ and integrating by parts, we obtain 
\begin{align}
\label{e:rough-energy}
\lambda(\alpha) = \|\d_tu(\cdot,\alpha)\|_{L^2(\R)}^2 + \left\| \left(\frac{t^2}2-\alpha \right) u(\cdot,\alpha)\right\|_{L^2(\R)}^2 . 
\end{align}
In particular,  we always have  
\begin{align}
\label{e:rough-rough-eig}
\left\| \left(\frac 12 t^{2} -\alpha \right) u(\cdot,\alpha)\right\|_{L^2(\R)}^2 \leq \lambda(\alpha)\,.
\end{align}
\begin{lemma}
Assume that $(\lambda(\alpha), u(\cdot,\alpha))$ satisfy the eigenvalue equation~\eqref{e:eingenvalue-eqn}. Then we have
\begin{align}
\label{e:variation-eta}
\alpha \lambda'(\alpha)  + \lambda(\alpha) = 3 \left\| \left(\frac 12 t^{2} -\alpha \right) u(\cdot,\alpha)\right\|_{L^2(\R)}^2 .
\end{align}
\end{lemma}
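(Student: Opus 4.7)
The labelling of~\eqref{e:variation-eta} already hints that the natural route uses the dilation group $(T_\eta)_{\eta>0}$ introduced in~\eqref{e:Tnu}. The strategy is to exploit the fact that the conjugated operator
\begin{equation*}
H(\eta):=T_\eta^{-1}\,\mathfrak h_{\mathcal{M}}(\alpha)\,T_\eta,\qquad \eta>0,
\end{equation*}
is unitarily equivalent to $\mathfrak h_{\mathcal{M}}(\alpha)$, so its spectrum does not depend on $\eta$. A direct computation, using $T_\eta u(t)=\eta^{1/2}u(\eta t)$ and the change of variables $s=\eta t$, gives
\begin{equation*}
H(\eta)=-\eta^{2}\,\partial_t^2+\Big(\frac{t^{2}}{2\eta^{2}}-\alpha\Big)^{\!2},\qquad \partial_\eta H(\eta)\big|_{\eta=1}=-2\,\partial_t^2-2\,t^{2}\Big(\frac{t^{2}}{2}-\alpha\Big).
\end{equation*}

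I would then apply the Feynman--Hellmann formula to the constant eigenvalue branch $\eta\mapsto\lambda(\alpha)$ of $H(\eta)$ corresponding to the eigenfunction $\eta\mapsto T_\eta^{-1}u(\cdot,\alpha)$, at $\eta=1$. Being constant, this branch forces
\begin{equation*}
0=\bigl\langle \partial_\eta H(1)\,u(\cdot,\alpha),u(\cdot,\alpha)\bigr\rangle_{L^2(\R)}=2\|\partial_tu(\cdot,\alpha)\|_{L^2}^2-2\int_\R t^{2}\Big(\frac{t^{2}}{2}-\alpha\Big)u(\cdot,\alpha)^{2}\,dt,
\end{equation*}
namely the classical Virial identity for the potential $V(t)=(t^{2}/2-\alpha)^{2}$. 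Rewriting $t^{2}=2(t^{2}/2-\alpha)+2\alpha$ inside the integral yields
\begin{equation*}
\|\partial_tu(\cdot,\alpha)\|_{L^2}^{2}=2\Big\|\Big(\frac{t^{2}}{2}-\alpha\Big)u(\cdot,\alpha)\Big\|_{L^2}^{2}+2\alpha\int_\R\Big(\frac{t^{2}}{2}-\alpha\Big)u(\cdot,\alpha)^{2}\,dt.
\end{equation*}

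To conclude, I would recognise the last integral as $-\lambda'(\alpha)/2$ thanks to~\eqref{FeHei}, and then substitute the resulting expression for $\|\partial_tu\|_{L^2}^{2}$ into the energy decomposition~\eqref{e:rough-energy}. Eliminating $\|\partial_tu\|_{L^2}^{2}$ produces exactly~\eqref{e:variation-eta}. No serious obstacle arises: the only technical point is the smoothness in $\eta$ of the eigenfunction branch $\eta\mapsto T_\eta^{-1}u(\cdot,\alpha)$ needed to differentiate at $\eta=1$, but this is immediate from the exponential decay of $u(\cdot,\alpha)$ together with its derivatives (Proposition~\ref{p:def-lambda}\eqref{regularity}); in fact the Virial identity may alternatively, and more elementarily, be obtained by multiplying~\eqref{e:eingenvalue-eqn} by $t\,\partial_t u(\cdot,\alpha)$ and integrating by parts, bypassing $H(\eta)$ altogether.
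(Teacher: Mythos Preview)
Your proof is correct and follows essentially the same route as the paper's own argument: both exploit the dilation invariance via $T_\eta$ to derive a virial-type identity, then combine it with the Feynman--Hellmann formula~\eqref{FeHei} and the energy decomposition~\eqref{e:rough-energy}. The only cosmetic difference is that the paper conjugates with $T_\eta$ on the left (obtaining $-\eta^{-2}\partial_t^2+(\eta^2 t^2/2-\alpha)^2$) while you conjugate on the right; this amounts to the substitution $\eta\leftrightarrow\eta^{-1}$ and leads to the same identity at $\eta=1$.
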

As a consequence of this lemma, one can improve a little the estimate~\eqref{e:rough-rough-eig} when $\alpha=\alpha_c$ is a critical point of $\lambda$ (this is already stated in~\cite[Proposition 3.5 and (3.14)]{Pan-Kwek}), namely 
\begin{equation} \label{eq:newid2}
3 \| (\frac 12 t^{2} -\alpha_{c})
 u(\cdot,\alpha_{c})\|^2 = \lambda(\alpha_c)\,.
\end{equation}
The proof uses invariance by dilation $T_\eta$, defined in~\eqref{e:Tnu}, of the spectrum.
\begin{proof}
Since $T_\eta$ defined in~\eqref{e:Tnu} is unitary, we deduce from~\eqref{e:eingenvalue-eqn} that, for all $\eta>0$ and $\alpha \in \R$
$$
T_\eta h _{\mathcal M}(\alpha) T_\eta^{-1}  \ T_\eta u(\cdot,\alpha) =  \lambda (\alpha) T_\eta u(\cdot,\alpha) , \quad \|T_\eta u(\cdot,\alpha)\|_{L^2(\R)}=1  ,
$$
where the conjugated operator is given by 
$$
T_\eta h _{\mathcal M}(\alpha) T_\eta^{-1}  =  - \frac{1}{\eta^2} \frac{d^2}{dt^2} + \left(\frac 12 \eta^{2} t^{2} -\alpha \right)^2 .
$$
That is to say, the spectrum is invariant and the eigenfunctions are only dilated. 
Setting $u(t,\eta,\alpha):=T_\eta u(t,\alpha)$, the above equation rewrites 
$$
 - \frac{1}{\eta^2} \d_t^2 u(t,\eta,\alpha) + \left(\frac 12 \eta^{2} t^{2} -\alpha \right)^2u(t,\eta,\alpha) = \lambda (\alpha)u(t,\eta,\alpha)  , \quad  \|u(\cdot,\eta, \alpha)\|_{L^2(\R)}=1 .
$$
Differentiating this expression with respect to $\eta$, we obtain
$$
2 \eta^{-3} \d_t^2 u  + 2\eta t^2\left(\frac 12 \eta^{2} t^{2} -\alpha \right) u +  \mathfrak h _{\mathcal M}(\alpha) \d_\eta u  = \lambda \d_\eta u ,
$$
together with 
\begin{align}
\label{e:ortho-ortho}
\left( u(\cdot,\eta, \alpha) , \d_\eta u(\cdot,\eta, \alpha) \right)_{L^2(\R)} = 0.
\end{align}
Taking now the scalar product of the previous identity with $u(\cdot,\eta,\alpha)$ and using~\eqref{e:ortho-ortho}, we deduce
$$
- 2 \eta^{-3} \| \d_t u \|^2  + 2\eta \int_\R t^2\left(\frac 12 \eta^{2} t^{2} -\alpha \right) u^2(t,\eta,\alpha) dt  + (\mathfrak h _{\mathcal M}(\alpha) \d_\eta u  , u)_{L^2(\R)} = 0 .
$$
Selfadjointness of $\mathfrak h _{\mathcal M}(\alpha)$ together with~\eqref{e:ortho-ortho} imply  
$$
 (\mathfrak h _{\mathcal M}(\alpha) \d_\eta u  , u)_{L^2(\R)} =  ( \d_\eta u  , \mathfrak h _{\mathcal M}(\alpha)u)_{L^2(\R)}  =\lambda(\alpha)  ( \d_\eta u  , u)_{L^2(\R)} = 0,
 $$
and we finally deduce
$$
- \eta^{-3} \| \d_t u \|^2  + \eta \int_\R t^2\left(\frac 12 \eta^{2} t^{2} -\alpha \right) u^2(t,\eta,\alpha) dt  = 0 .
$$
Fixing now $\eta=1$, this rewrites as
$$
-  \| \d_t u \|^2  +  2\int_\R \left(\frac 12 t^{2} -\alpha  +\alpha\right)\left(\frac 12 t^{2} -\alpha \right) u^2(t,\eta,\alpha) dt  = 0 ,
$$
and, recalling the first variation formula~\eqref{FeHei}, this implies
\begin{align*}
0 & = -  \| \d_t u \|^2  +  2\left\| \left(\frac 12 t^{2} -\alpha \right) u(\cdot,\alpha)\right\|_{L^2(\R)}^2 -  \alpha\lambda'(\alpha) . 
\end{align*}
Combined with the energy identity~\eqref{e:rough-energy}, this implies~\eqref{e:variation-eta} and concludes the proof of the lemma.
\end{proof}
\subsection{An integration by part formula}\label{ss2.3}
The following lemma will be useful in the proof of Theorem~\ref{t:lambda1} and (as a technical device) Theorem~\ref{thcj}.
Although initially due to Pan-Kwek in \cite{Pan-Kwek} for $\lambda_1$ (see also \cite{DauHel} and \cite{He2008}) the most elegant proof is given in \cite{HPS}. We recall it quickly.
\begin{lemma}
Assume that $(\lambda(\alpha), u(\cdot,\alpha))$ satisfy the eigenvalue equation~\eqref{e:eingenvalue-eqn}. Then we have
\begin{equation}
\label{eq:ipf2} 
 2 \int_0^{+\infty} (t- \sqrt{2\alpha}) (t^2/ 2 - \alpha)  u^2 (t,\alpha)\, dt - \frac{\sqrt{\alpha}}{\sqrt{2}}  \lambda'(\alpha)
 =  (\lambda(\alpha) - \alpha^2) u (0,\alpha)^2 + u'(0,\alpha) ^2\,.
\end{equation}
\end{lemma}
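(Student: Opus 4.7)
Set $V(t) = (\frac{1}{2} t^2 - \alpha)^2$, so that $V'(t) = 2t(\frac{1}{2}t^2 - \alpha)$ and $V(0) = \alpha^2$. The strategy is in two steps: first, reduce~\eqref{eq:ipf2} to a cleaner Pohozaev-type identity; second, prove the latter by integrating a suitable perfect derivative.

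For the reduction, observe the algebraic identity
\begin{equation*}
2(t-\sqrt{2\alpha})\big(\tfrac{1}{2}t^2 - \alpha\big) + 2\sqrt{2\alpha}\big(\tfrac{1}{2}t^2 - \alpha\big) = 2t\big(\tfrac{1}{2}t^2 - \alpha\big) = V'(t).
\end{equation*}
Since the integrand is even, the Feynman-Hellmann formula~\eqref{FeHei} yields
$$
2\sqrt{2\alpha}\int_0^{+\infty}\big(\tfrac{1}{2}t^2 - \alpha\big) u^2(t,\alpha)\, dt = -\frac{\sqrt{\alpha}}{\sqrt 2}\lambda'(\alpha).
$$
Consequently, identity~\eqref{eq:ipf2} is equivalent to the Pohozaev-type identity
\begin{equation}
\label{eq:pohozaev-plan}
\int_0^{+\infty} V'(t)\, u(t,\alpha)^2\, dt = (\lambda(\alpha) - \alpha^2)\, u(0,\alpha)^2 + u'(0,\alpha)^2.
\end{equation}

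To prove~\eqref{eq:pohozaev-plan}, the plan is to use the eigenvalue equation $u'' = (V-\lambda)u$ to write the integrand as an exact derivative:
$$
\frac{d}{dt}\Big(\!-(u')^2 + (V-\lambda)u^2\Big) = -2u'\big(u'' - (V-\lambda) u\big) + V'(t) u^2 = V'(t) u(t,\alpha)^2.
$$
Integrating from $0$ to $+\infty$, the left-hand side becomes a pure boundary term. The contribution at $+\infty$ vanishes thanks to the exponential decay of $u$ and $u'$ recalled in Proposition~\ref{p:def-lambda}, item~(\ref{regularity}); the contribution at $t=0$ gives exactly $u'(0,\alpha)^2 + (\lambda(\alpha) - \alpha^2) u(0,\alpha)^2$, using $V(0)=\alpha^2$. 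This is~\eqref{eq:pohozaev-plan}.

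The argument is essentially bookkeeping and presents no serious obstacle; the only nontrivial observation is that the apparently ad hoc shift $t\mapsto t-\sqrt{2\alpha}$ in~\eqref{eq:ipf2} is designed precisely so that, after absorbing the Feynman-Hellmann correction $-\frac{\sqrt{\alpha}}{\sqrt 2}\lambda'(\alpha)$, the remaining integrand is $V'(t)u^2$, which is a perfect derivative modulo the eigenvalue equation and therefore integrates to the stated boundary terms.
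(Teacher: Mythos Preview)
Your proof is correct and follows essentially the same approach as the paper. Both reduce the statement via Feynman--Hellmann to the Pohozaev-type identity $\int_0^{+\infty} V'(t)\,u^2\,dt = (\lambda-\alpha^2)u(0)^2 + u'(0)^2$; the only cosmetic difference is that you recognize $V'u^2$ directly as the derivative of $-(u')^2+(V-\lambda)u^2$, whereas the paper integrates $V'u^2$ by parts once and then invokes the eigenvalue equation on the resulting terms---these are two presentations of the same computation.
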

\begin{proof}
Using integration by parts together with $u(\cdot,\alpha) \in \mathcal{S}(\R)$,  according to Proposition~\ref{p:def-lambda}, we have 
\begin{align*}
\int_0^{+\infty} \frac{d}{dt} \left((\frac 12 t^2-\alpha)^2\right) u^2 (t,\alpha)\, dt = - \alpha^2 u^2 (0,\alpha) -  2 \int_0^{+\infty}  (\frac 12 t^2-\alpha)^2 u  (t,\alpha) u'(t,\alpha)^2\, dt .
\end{align*}
The  eigenvalue equation~\eqref{e:eingenvalue-eqn} then implies 
\begin{align}\label{eq:ipf}
& \int_0^{+\infty} \frac{d}{dt} \left((\frac 12 t^2-\alpha)^2\right) u^2 (t,\alpha)\, dt  \nonumber\\
& \quad  = - \alpha^2 u^2 (0,\alpha)  -2 \int_0^{+\infty}  \lambda(\alpha) u  (t,\alpha) u'(t,\alpha)\, dt - 2 \int_0^{+\infty}   u''  (t,\alpha) u'(t,\alpha)\, dt  \nonumber \\
& \quad = (\lambda(\alpha) - \alpha^2) u (0,\alpha)^2 + u'(0,\alpha) ^2\,.
\end{align}
We now compute the left hand-side as
\begin{align*} 
 & \int_0^{+\infty} \frac{d}{dt} \left((\frac 12 t^2-\alpha)^2\right) u^2 (t,\alpha)\, dt 
  =  \int_0^{+\infty}  2t \left(\frac 12 t^2-\alpha \right) u^2 (t,\alpha)\, dt \\
 & \quad  =  \int_0^{+\infty}  2(t- \sqrt{2\alpha}  )\left(\frac 12 t^2-\alpha \right) u^2 (t,\alpha)\, dt  + 2 \sqrt{2\alpha} \int_0^{+\infty} \left(\frac 12 t^2-\alpha \right) u^2 (t,\alpha)\, dt  
 \end{align*}
Using the Feynman-Hellmann formula~\eqref{FeHei}, this rewrites
\begin{align*} 
  \int_0^{+\infty} \frac{d}{dt} \left((\frac 12 t^2-\alpha)^2\right) u^2 (t,\alpha)\, dt 
   = 2 \int_0^{+\infty}  (t- \sqrt{2\alpha}  )\left(\frac 12 t^2-\alpha \right) u^2 (t,\alpha)\, dt  - \frac{\sqrt{\alpha}}{\sqrt{2}} \lambda'(\alpha).
 \end{align*}
Combined with~\eqref{eq:ipf}, this proves~\eqref{eq:ipf2} and concludes the proof of the lemma. 
\end{proof}

As an immediate first  application we have the following proposition.
\begin{prop}\label{lem:abound}
Assume that $j$ is odd and that $\alpha_{j,c}$ is a critical point of 
$\lambda_j(\alpha)$. Then
\begin{equation}\label{eq:abound}
\alpha_{j,c}^2  < \lambda_j (\alpha_{j,c})\,.
\end{equation}
\end{prop}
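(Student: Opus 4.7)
My plan is to apply the integration by parts formula \eqref{eq:ipf2} at the critical point and exploit parity to eliminate two of its four terms, leaving an inequality directly equivalent to \eqref{eq:abound}.

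First, I would record three reductions. Since $j$ is odd, Proposition~\ref{p:def-lambda}(4) ensures $u_j(\cdot,\alpha)$ is even, hence $u_j'(0,\alpha) = 0$ for every $\alpha$; in particular the boundary term $u'(0,\alpha_{j,c})^2$ in \eqref{eq:ipf2} drops out. Second, since $\alpha_{j,c}$ is a critical point, $\lambda_j'(\alpha_{j,c}) = 0$, which kills the $\frac{\sqrt{\alpha}}{\sqrt{2}} \lambda'(\alpha)$ term. Third, by Corollary~\ref{c:decreasing-neg}, $\alpha_{j,c} > 0$, so $\sqrt{2 \alpha_{j,c}}$ is a well-defined real number and the factorization
\[
(t - \sqrt{2\alpha})\left(\tfrac12 t^2 - \alpha\right) = \tfrac12 (t - \sqrt{2\alpha})^2 (t + \sqrt{2\alpha})
\]
is valid. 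For $t \geq 0$ and $\alpha > 0$ both factors on the right are nonnegative, so the integrand on the left-hand side of \eqref{eq:ipf2} is pointwise nonnegative.

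With these reductions, \eqref{eq:ipf2} at $\alpha = \alpha_{j,c}$ becomes
\[
\int_0^{+\infty} (t - \sqrt{2\alpha_{j,c}})^2 (t + \sqrt{2\alpha_{j,c}})\, u_j^2(t,\alpha_{j,c})\, dt = \bigl(\lambda_j(\alpha_{j,c}) - \alpha_{j,c}^2\bigr)\, u_j(0,\alpha_{j,c})^2.
\]
The integrand on the left vanishes only at $t = \sqrt{2\alpha_{j,c}}$ among $t \geq 0$; since $u_j$ is real-analytic (Proposition~\ref{p:def-lambda}(3)) and not identically zero, the integral is strictly positive. It remains to check that $u_j(0, \alpha_{j,c}) \neq 0$, which I would justify by the ODE uniqueness argument: if both $u_j(0,\alpha_{j,c}) = 0$ and $u_j'(0,\alpha_{j,c}) = 0$, then by Cauchy--Lipschitz applied to the linear second-order equation $-u'' + (\tfrac12 t^2 - \alpha)^2 u = \lambda u$, one would have $u_j \equiv 0$, contradicting normalization. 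Therefore $u_j(0,\alpha_{j,c})^2 > 0$, and dividing through yields $\lambda_j(\alpha_{j,c}) - \alpha_{j,c}^2 > 0$, i.e.\ \eqref{eq:abound}.

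There is no substantial obstacle in this argument; the only ingredient to be slightly careful about is the non-vanishing of $u_j(0,\alpha_{j,c})$, which is precisely where the parity hypothesis ($j$ odd) is essential. For even $j$ the function $u_j$ is odd, so $u_j(0,\alpha) \equiv 0$ automatically and the right-hand side above would vanish regardless of the sign of $\lambda_j(\alpha_{j,c}) - \alpha_{j,c}^2$, explaining why the statement is restricted to odd $j$.
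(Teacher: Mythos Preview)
Your proof is correct and follows essentially the same route as the paper: apply \eqref{eq:ipf2} at the critical point, use parity to kill $u_j'(0,\alpha_{j,c})$, factor the integrand as $\tfrac12(t-\sqrt{2\alpha})^2(t+\sqrt{2\alpha})$, and conclude. Your version is in fact slightly more careful than the paper's, since you explicitly verify $u_j(0,\alpha_{j,c})\neq 0$ via ODE uniqueness (the paper leaves this step implicit).
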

\begin{proof}
When $j$ is odd, we recall from Proposition~\ref{p:def-lambda} that $u_j(\cdot, \alpha)$ is even, so that $u_j'(\cdot, \alpha)=0$. We deduce from the universal formula~\eqref{eq:ipf2} that if in addition $\lambda_j'(\alpha_{j,c})=0$, then
\begin{align*}
 \big(\lambda_j(\alpha_{j,c}) - \alpha_{j,c}^2\big) u_j (0,\alpha_{j,c})^2& = 2\int_0^{+\infty} ( t - \sqrt{2 \alpha_{j,c}})  \left(\frac 12 t^2-\alpha_{j,c}\right) u_j^2 (t,\alpha_{j,c})\, dt \quad , j \mbox{ odd }\\
&  =    \int_0^{+\infty} ( t + \sqrt{2 \alpha_{j,c}})( t - \sqrt{2 \alpha_{j,c}})^2  u_j^2 (t,\alpha_{j,c})\, dt .
\end{align*}
Observing that the right hand-side is positive ($u_j$ does not vanish identically on an open set, otherwise would vanish identically on $\R$) concludes the proof of the proposition.
\end{proof}
 
 \subsection{The second derivative}
For the analysis of the second derivative, we also need the following formula established in \cite{HK2008a} and probably much older. As in the proof of the
Feynman-Hellmann
 formula, we start from~:
\begin{equation}\label{e:Qlambda}
\left(\mathfrak h_{\mathcal M}(\alpha) -\lambda_j(\alpha)\right)\frac{\partial
u_j(\cdot, \alpha)}{\partial\alpha}= \left[2 \left(\frac{t^{2}}{2}-
\alpha \right) +\lambda_j'(\alpha)\right] u_j(\cdot,\alpha)\,.
\end{equation}
Differentiating once more and taking the scalar product with $u_j$ we obtain
\begin{equation}\label{eq:2.15}
\lambda_j''(\alpha) = 2  - 4  \int (\frac 12 t^{2} -\alpha)
u_j(t,\alpha)\,\partial_\alpha u_j(t,\alpha)   \,dt\,,
\end{equation}
where we have used that $u_j$ is normalized. This implies
\begin{equation}\label{eq:2.15aa}
\lambda_j''(\alpha) = 2  - 2   \int  t^{2} 
u_j(t,\alpha)\,\partial_\alpha u_j(t,\alpha)   \,dt =  2  -  \frac{d}{d\alpha} \big(\int t^2 u_j(t,\alpha) ^2  \,dt\big) \,.
\end{equation}
Note finally, that we can rewrite the last inequality in a form where the normalization condition is not necessarily assumed
\begin{equation}\label{eq:2.15b}
\lambda_j''(\alpha) = 2  - 2   \int  t^{2} 
u_j(t,\alpha)\,\partial_\alpha u_j(t,\alpha)   \,dt =  2  - \frac{d}{d\alpha} \Big( \big(\int t^2 u_j(t,\alpha) ^2  \,dt\big)  \big(\int  u_j(t,\alpha) ^2 dt \big)^{-1}\Big)  \,.
\end{equation}

\subsection{No critical point in the regime $\lambda_j(\alpha_j)\gg \alpha_j^2$}
\label{s:regime-1}
We prove in this preliminary section that there is no critical point in the regime  $\frac{\alpha_j^2}{\lambda_j(\alpha_j)} \to 0$. The latter are thus necessarily located in the regime $\lambda_j(\alpha_j)\lesssim \alpha_j^2$, which we investigate in the next sections. 
\begin{prop}
\label{l:regime-Einfty}
Let $(\alpha_j)_{j \in \N}$ a real sequence, and assume $\lambda_j(\alpha_j) \to + \infty$ and $\frac{\alpha_j^2}{\lambda_j(\alpha_j)} \to 0$, as $j \to +\infty$. Then we have 
\begin{equation}
\label{e:regime-interm}
 \frac{ \lambda_j' (\alpha_j)}{\alpha_j \lambda_j (\alpha_j)^{1/2}} \to  - K_1 \int_{-\sqrt{2}}^{\sqrt{2}}  \frac{s^2}{\sqrt{1-s^4/4}} ds  , \quad K_1  = \left( \int_{-\sqrt{2}}^{\sqrt{2}} \frac{ds}{\sqrt{1-s^4/4}} \right)^{-1} ,
\end{equation}
as $j \to +\infty$.
\end{prop}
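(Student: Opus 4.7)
The plan is to combine the Feynman--Hellmann identity~\eqref{FeHei} with a dilation that puts the operator into a semiclassical form dominated by the quartic potential, and then to read the limit off the (essentially explicit) semiclassical measure of the rescaled eigenfunctions.

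First, I would rescale via $\eta_j := \lambda_j(\alpha_j)^{1/4}$ and set $v_j := T_{\eta_j} u_j(\cdot,\alpha_j)$, which remains $L^2$-normalized. A change of variables gives $\int_\R t^2 u_j(t,\alpha_j)^2\,dt = \lambda_j(\alpha_j)^{1/2}\int_\R s^2 v_j(s)^2\,ds$, so that~\eqref{FeHei} reads
\begin{equation*}
\lambda_j'(\alpha_j) = -\lambda_j(\alpha_j)^{1/2}\int_\R s^2 v_j(s)^2\,ds + 2\alpha_j.
\end{equation*}
Conjugating the eigenvalue equation by $T_{\eta_j}$ and dividing by $\lambda_j(\alpha_j)$ transforms it into $(P^{\eps_j} + R_j)v_j = v_j$, with $P^{\eps_j} = -\eps_j^2 \partial_s^2 + s^4/4$, $\eps_j = \lambda_j(\alpha_j)^{-3/4} \to 0$, and perturbation $R_j = -\alpha_j \lambda_j(\alpha_j)^{-1/2} s^2 + \alpha_j^2 \lambda_j(\alpha_j)^{-1}$, whose coefficients both tend to zero by the hypotheses.

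Next, I would compute the limit of $\int s^2 v_j^2\,ds$ through semiclassical measures. Since $P^\eps$ is a confining one-well semiclassical Schr\"odinger operator and energy $1$ is regular (the classical curve $\{p^2+s^4/4=1\}$ is a single smooth periodic orbit), the results of~\cite{LL:22} identify the semiclassical measure of $|v_j|^2\,ds$ as the position marginal of the normalized Liouville measure on $\{p^2+s^4/4=1\}$, namely $\mu(ds) = K_1 \mathbf{1}_{[-\sqrt 2,\sqrt 2]}(s)(1 - s^4/4)^{-1/2}\,ds$, with $K_1$ as in the statement. The perturbation $R_j$ is then handled by rewriting the full conjugated symbol as $p^2 + (s^2/2 - \alpha_j \lambda_j(\alpha_j)^{-1/2})^2$, which converges to $p^2 + s^4/4$ uniformly on compact sets and leaves the limit measure unchanged.

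To pass to the limit in $\int s^2 v_j^2\,ds$ despite $s^2$ being unbounded, I would establish a uniform tightness bound: the identity~\eqref{e:rough-rough-eig} combined with $\alpha_j^2 \ll \lambda_j$ yields $\int t^4 u_j^2\,dt \leq C\lambda_j$, hence $\int s^4 v_j^2\,ds \leq C$, and therefore $\int_{|s|\geq R} s^2 v_j^2\,ds \leq C R^{-2}$ uniformly in $j$. Truncation and weak-$*$ convergence then give $\int s^2 v_j^2\,ds \to K_1 \int_{-\sqrt 2}^{\sqrt 2} s^2 (1-s^4/4)^{-1/2}\,ds$. Substituting back into the Feynman--Hellmann formula and using $\alpha_j/\lambda_j(\alpha_j)^{1/2} \to 0$ produces the limit claimed in~\eqref{e:regime-interm}. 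The main obstacle is the semiclassical measure step: one must rigorously apply the framework of~\cite{LL:22} at the regular energy $1$ of $P^{\eps_j}$, and check that the small but unbounded perturbation $R_j$ (which carries a growing weight $s^2$) does not alter the identification of the limit as the Liouville measure.
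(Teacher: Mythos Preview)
Your approach is essentially identical to the paper's: both rescale by $\lambda_j^{1/4}$ to reach the semiclassical operator $-\eps_j^2\partial_s^2 + (s^2/2-\delta_j)^2$ at energy $1$ with $\eps_j=\lambda_j^{-3/4}$ and $\delta_j = \alpha_j\lambda_j^{-1/2}\to 0$, identify the limit of $|v_j|^2\,ds$ as the Liouville measure on $\{\xi^2+s^4/4=1\}$ via~\cite{LL:22}, and then insert this into the Feynman--Hellmann formula. Your handling of the unbounded test function $s^2$ through the uniform $s^4$-moment bound and truncation is a slightly more self-contained alternative to the paper's appeal to Agmon-type decay (Proposition~\ref{local}).
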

Note that in this regime, $\alpha$ does not necessarily converge to $+ \infty$. Note also that $\min_{\alpha \in \R}\lambda_j(\alpha) \to +\infty$ as $j \to +\infty$, so that the assumption $\lambda_j(\alpha_j) \to + \infty$ is implied by $j \to+\infty$.

For the next result, recall that $A_{j,c}$ is the set of critical points of $\alpha \mapsto \lambda_j(\alpha)$.

\begin{cor}\label{lemma3.5}
 For any choice of $\alpha_{j,c}$ in $A_{j,c}$ we have
 \begin{equation} \label{lim1}
 \limsup_{j\ar +\infty} \lambda_j(\alpha_{j,c})/\alpha_{j,c}^2 < +\infty\,.
 \end{equation}
\end{cor}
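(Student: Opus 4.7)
The plan is to argue by contradiction: suppose $\limsup_{j\to+\infty}\lambda_j(\alpha_{j,c})/\alpha_{j,c}^2 = +\infty$ and extract a subsequence, still indexed by $j$, along which $\lambda_j(\alpha_{j,c})/\alpha_{j,c}^2 \to +\infty$. The goal is to show that this subsequence falls into the hypotheses of Proposition~\ref{l:regime-Einfty}, whose conclusion \eqref{e:regime-interm} will then clash with the fact that $\lambda'_j(\alpha_{j,c})=0$.

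First I would verify that the three required conditions are met along this subsequence. By Corollary~\ref{c:decreasing-neg}, every critical point of $\lambda_j$ lies in $(0,+\infty)$, hence $\alpha_{j,c}>0$ and dividing by $\alpha_{j,c}$ is legitimate. Next, $\lambda_j(\alpha_{j,c})\to+\infty$: indeed, $\min_{\alpha\in\R}\lambda_j(\alpha)\to+\infty$ as $j\to+\infty$ (this is the remark following Proposition~\ref{l:regime-Einfty}), and the minimum is attained at a critical point by \eqref{uniqmin}, so $\lambda_j(\alpha_{j,c})\geq \min_{\alpha\in\R}\lambda_j(\alpha)\to+\infty$. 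Finally, the assumption $\lambda_j(\alpha_{j,c})/\alpha_{j,c}^2\to+\infty$ rewrites exactly as $\alpha_{j,c}^2/\lambda_j(\alpha_{j,c})\to 0$.

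Thus Proposition~\ref{l:regime-Einfty} applies and yields
$$
\frac{\lambda_j'(\alpha_{j,c})}{\alpha_{j,c}\,\lambda_j(\alpha_{j,c})^{1/2}}\ \longrightarrow\ -\,K_1\int_{-\sqrt{2}}^{\sqrt{2}}\frac{s^2}{\sqrt{1-s^4/4}}\,ds,
$$
a strictly negative constant (the integrand is positive and $K_1>0$). On the other hand, since $\alpha_{j,c}\in A_{j,c}$, one has $\lambda_j'(\alpha_{j,c})=0$, so the left-hand side is identically $0$ for every $j$ in the subsequence. This is the desired contradiction, and the corollary follows.

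No genuine obstacle is expected: the proof is essentially a direct contrapositive application of Proposition~\ref{l:regime-Einfty}. The only delicate point is checking that the subsequence we extract actually meets the proposition's hypothesis $\lambda_j(\alpha_{j,c})\to+\infty$ (which could in principle fail if $\alpha_{j,c}$ were bounded and $\lambda_j(\alpha_{j,c})$ too), but this is handled by the uniform blow-up of the minima noted right after Proposition~\ref{l:regime-Einfty}.
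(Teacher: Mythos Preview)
Your proof is correct and follows essentially the same approach as the paper: argue by contradiction, extract a subsequence along which $\lambda_j(\alpha_{j,c})/\alpha_{j,c}^2\to+\infty$, apply Proposition~\ref{l:regime-Einfty}, and observe that the left-hand side of~\eqref{e:regime-interm} vanishes at critical points while the right-hand side is strictly negative. You are simply more explicit than the paper in verifying the hypotheses of Proposition~\ref{l:regime-Einfty} (positivity of $\alpha_{j,c}$ and $\lambda_j(\alpha_{j,c})\to+\infty$), which is fine.
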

\begin{proof}[Proof of Corollary~\ref{lemma3.5}]
Suppose that there exists a subsequence $j_k$ such that $ \lambda_{j_k}(\alpha_{j_k,c})/\alpha_{j_k,c}^2$ tends to $+\infty$ and $\alpha_{j_k ,c} \in A_{j_k,c}$. Then, applying Proposition~\ref{l:regime-Einfty},  the left hand side of~\eqref{e:regime-interm} equals $0$ since $\alpha_{j_k,c}$ is critical, whereas the right hand side is negative. This yields a contradiction.
\end{proof}

\begin{proof}[Proof of Proposition~\ref{l:regime-Einfty}]
We drop the index $j$ for readability. 
We start from the eigenvalue equation~\eqref{e:eigen} with $\lambda=\lambda(\alpha)$, in which we set $w = T_{\lambda^{1/4}}u$, where the dilation operator $T_\eta$ is defined in~\eqref{e:Tnu}. We obtain
\begin{align*}
\left( -\frac{1}{\lambda^{1/2}} \frac{d^2}{ds^2} + \left(\lambda^{1/2} \frac{s^2}{2}-\alpha\right)^2 \right) w = \lambda w\, . 
\end{align*}
 Dividing by $\lambda$, we obtain a semiclassical problem with semiclassical parameter $\eps:=\lambda^{-3/4}\to 0$ (by assumption), namely 
 \begin{align*}
\left( -\eps^2 \frac{d^2}{ds^2} + \left(\frac{s^2}{2}-\delta \right)^2 \right) w =  w , \quad \text{with } \delta =  \frac{\alpha}{\lambda^{1/2}} \to 0  \text{ (by assumtion)}.
\end{align*}
  Let $\mu$ be a semiclassical measure at scale $\eps_k$, associated to a sequence $(w_k,\eps_k, \delta_k)$ of solutions to this equation (see e.g.~\cite{LL:22} for a definition). The energy estimate 
 $$
  \|\eps_k w_k' \|_{L^2(\R)}^2  + \| ( s^2/2 -\delta_k )  w_k   \|_{L^2(\R)}^2 = \| w_k \|_{L^2(\R)}^2 =1
 $$
 shows on the one hand that $\|\eps_k w_k' \|_{L^2(\R)}$ is bounded, whence $\limsup_{k \to +\infty}  \left\| \mathcal{F}(w_k) (\xi)\right\|_{L^2(\eps_k|\xi|\geq R)} \to 0$ as $R\to + \infty$ (that is to say, the sequence $w_k$ is $\eps_k$-oscillating) and on the other hand that for $k$ sufficiently large so that $\delta_k\leq 1$,
 $$
 \frac12\|  s^2  w_k   \|_{L^2(\R)}  \leq  \| ( s^2/2 -\delta_k ) w_k   \|_{L^2(\R)} +\| w_k   \|_{L^2(\R)}  \leq 2 , 
 $$
 whence $\limsup_{k \to +\infty}  \left\|  w_k \right\|_{L^2(|x|\geq R)} \to 0$ as $R\to + \infty$ (that is to say, $w_k$ is compact at infinity).
 As a consequence (see e.g.~\cite{LL:22}) the semiclassical measure $\mu$ is a probability measure on $\R^2$ and its projection on the $x$-space,  $\pi_* \mu$ is a probability measure on $\R$.
 Then, the semiclassical pseudodifferential calculus shows (see e.g.~\cite{Zwo,LL:22}) that $\supp(\mu) \subset \{(s,\xi) \in \R^2,p(s,\xi) =1 \}$ with $p(s,\xi) =  \xi^2 + \frac{s^4}{4}$, and that $\mu$ is $H_p$-invariant, namely $H_p\mu= 0$. As a consequence, $\mu$ is the unique invariant probability measure supported by $\{(s,\xi) \in \R^2,p(x,\xi) =1 \}$, given explicitly as follows (see~\cite[Theorem~1.6]{LL:22}): for all $a \in C^\infty_c(\R^2)$, 
 $$
 \langle \mu , a \rangle = \frac{K_1}{2} \sum_{\pm} \int_{-\sqrt{2}}^{\sqrt{2}} a (s, \pm \sqrt{1-s^4/4}) \frac{ds}{\sqrt{1-s^4/4}} , \quad K_1  = \left( \int_{-\sqrt{2}}^{\sqrt{2}} \frac{ds}{\sqrt{1-s^4/4}} \right)^{-1} .
 $$ 
 Combined with Lemma~\ref{local} below (stating in a similar context that eigenfunctions decay at infinity faster than any polynomial), this implies that for all $a \in C^\infty(\R)$ having at most polynomial growth at infinity, 
 $$
 \int_\R a(s) |w_k(s)|^2 ds \to K_1 \int_{-\sqrt{2}}^{\sqrt{2}}  \frac{a (s) }{\sqrt{1-s^4/4}} ds\, .
 $$
Since this convergence holds for any a sequence $(w_k,\eps_k, \delta_k)$ of solutions, it always holds for the full sequence $(w,\eps, \delta)$.

 Recalling that $u =T_{\lambda^{-1/4}} w$, we now deduce from the first variation formula~\eqref{FeHei} (and using that eigenfunctions are assumed normalized) that  
 $$
 \lambda'(\alpha) = 2\alpha  - \int_\R t^2 |u_k|^2(t) dt  =  \alpha \left(2 - \lambda^{1/2}\int_\R s^2 |w_k|^2(s) ds  \right)  .
 $$
Recalling that $\lambda = \lambda(\alpha) \to + \infty$  by assumption, we finally deduce that 
 $$
 \lim\frac{\lambda'(\alpha)}{\alpha \lambda(\alpha)^{1/2}} = \langle \pi_* \mu , s^2 \rangle 
 =  - K_1 \int_{-\sqrt{2}}^{\sqrt{2}}  \frac{s^2}{\sqrt{1-s^4/4}} ds  .
 $$
 Since this holds for any subsequence, $(u_k, \lambda_k, \alpha_k)$, this holds for any triple $(u, \lambda,\alpha)$.
\end{proof}

\section{Semiclassical measures in the regime $\lambda_j(\alpha_j)\lesssim \alpha_j^2$ and proof of Theorem~\ref{th1.4}} 
\label{s:semiclassique}
\subsection{Semiclassical rescaling}

Recalling the definition of the dilation operator $T_\eta$ in~\eqref{e:Tnu}, and choosing $\eta=\sqrt{\alpha}$ for the scaling parameter, we obtain
 \begin{equation} \label{eq:rescal}
T_{\sqrt{\alpha}} \, \mathfrak h _{\mathcal M}(\alpha) \, T_{\sqrt{\alpha}}^{-1} = -  \frac 1 \alpha \frac{d^2}{ds^2} + \alpha^2 (s^2/2 -1)^2= \alpha^2 \left(-h^2 \frac{d^2}{ds^2} + (s^2/2 -1)^2\right) = \alpha^2 P_h , 
\end{equation}
 with 
 \begin{equation}\label{eq:scl}
 h = \alpha^{-3/2}, \quad P_h :=-h^2 \frac{d^2}{ds^2} + (s^2/2 -1)^2 .
 \end{equation}
Hence the analysis as $\alpha \rightarrow +\infty$ becomes an analysis as $h \ar 0$.
Recall that we consider the real, $L^2$ normalized eigenfunction $u$ (resp. $u_k$) associated to the eigenvalue $\lambda$ (resp. $\lambda_k$) that is to say solution to 
\begin{align}
\label{e:eigen}
\HM u = \lambda u , \quad u \in D(\HM) , \quad \|u\|_{L^2(\R)}=1 . 
\end{align}
After the semiclassical rescaling~\eqref{eq:rescal}, we set
\begin{equation}
\label{e:vh-ualpha}
v(\cdot,h):=T_{\sqrt{\alpha}}\,u(\cdot , \alpha),\quad \text{resp.}\quad v_k(\cdot,h):=T_{\sqrt{\alpha}}\, u_k(\cdot,\alpha) ,
\end{equation}
and (recalling that $\alpha= h^{-2/3}$), 
\begin{equation}\label{eq:corr1}
E(h) :=\frac{\lambda(\alpha)}{\alpha^2} = h^{4/3} \lambda(\alpha), \quad \text{resp.}\quad E_k(h) :=\frac{\lambda_k(\alpha)}{\alpha^2} = h^{4/3} \lambda_k(\alpha) , 
\end{equation}
which, according to~\eqref{e:eigen} now solve the semiclassical eigenvalue equation
\begin{align}
\label{e:eigen-semi}
P_h v = E(h) v\, , \quad v \in D(P_h)\, , \quad \|v\|_{L^2(\R)}=1 \,,
\end{align}
where we recall that the operator $P_h$ is defined in~\eqref{eq:scl}.

The first variation formula~\eqref{FeHei} for the eigenvalue equation becomes 
\begin{align}
\label{e:lambda'h}
\lambda'(\alpha) = 2\alpha - \|x u(\cdot , \alpha )\|^2_{L^2}   =h^{-2/3} \left( 2 - \|x v(\cdot , h )\|^2_{L^2}  \right) .
\end{align}
Localizing critical points of $\lambda$ is thus equivalent  to solving $2 - \|x v(\cdot , h )\|^2_{L^2} =0$, where $v(\cdot, h)$ solves~\eqref{e:eigen-semi}.
We finally reformulate the second variation formula~\eqref{eq:2.15aa} in the semiclassical setting.
\begin{lemma}
\label{l:derivee-seconde-h}
With $\alpha= h^{-2/3}$ and $v(\cdot,h)$ as in~\eqref{e:vh-ualpha}, we have 
\begin{equation}\label{eq:2.15a}
\lambda''(\alpha) =    (1 -  \frac32 h\, \d_h) \,  \left( \int_\R  (2-s^2)   |v(s,h)|^2 ds\right)\,.
\end{equation}
\end{lemma}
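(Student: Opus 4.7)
The plan is to start from the identity~\eqref{eq:2.15aa}, namely
$$
\lambda''(\alpha) = 2 - \frac{d}{d\alpha}\left(\int_\R t^2 u(t,\alpha)^2\,dt\right),
$$
and translate the right hand side into the semiclassical variables $(s,h)$ via the unitary rescaling $v(\cdot,h)=T_{\sqrt{\alpha}}u(\cdot,\alpha)$ defined in~\eqref{e:vh-ualpha}.

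First I would perform the change of variables $t=\sqrt{\alpha}\, s$ in the integral $\int t^2 u(t,\alpha)^2 dt$. Since $u(t,\alpha)=\alpha^{-1/4}v(t/\sqrt{\alpha},h)$, the substitution produces a factor $\alpha$ in front of the moment of $v^2$, yielding
$$
\int_\R t^2 u(t,\alpha)^2\,dt = \alpha \int_\R s^2 v(s,h)^2\,ds.
$$
Since $T_{\sqrt{\alpha}}$ is unitary, $\|v(\cdot,h)\|_{L^2(\R)}=1$, which I will use to rewrite $\int s^2 v^2 ds = 2 - \int (2-s^2) v^2 ds$. Setting $J(h):=\int_\R (2-s^2)v(s,h)^2 ds$, this gives $\int t^2 u^2 dt = \alpha(2-J(h))$, so that
$$
\lambda''(\alpha) = 2 - \frac{d}{d\alpha}\bigl[\alpha(2-J(h))\bigr] = -2 + \frac{d}{d\alpha}\bigl[\alpha J(h)\bigr] + 2 - 2 = \frac{d}{d\alpha}\bigl[\alpha J(h(\alpha))\bigr] - 2 \cdot 0,
$$
where I have used that $\frac{d}{d\alpha}(2\alpha)=2$ cancels with the initial $2$. (I will clean up this constant algebra in the actual writeup.)

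Next, I would apply the chain rule using $h=\alpha^{-3/2}$, so that $\frac{dh}{d\alpha}=-\tfrac{3}{2}\alpha^{-5/2}=-\tfrac{3}{2}\alpha^{-1}h$. Thus
$$
\frac{d}{d\alpha}\bigl[\alpha J(h(\alpha))\bigr] = J(h) + \alpha J'(h)\frac{dh}{d\alpha} = J(h) - \frac{3}{2}h\,\partial_h J(h) = \Bigl(1-\frac{3}{2}h\,\partial_h\Bigr)J(h).
$$
Combining with the previous step and substituting back $J(h)=\int_\R(2-s^2)v(s,h)^2 ds$ produces exactly~\eqref{eq:2.15a}.

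There is no real obstacle here, only bookkeeping: the main point is to handle the Jacobian of the dilation $T_{\sqrt{\alpha}}$ and the $\alpha$-dependence of $h$ consistently, and to exploit the $L^2$-normalization of $v(\cdot,h)$ to absorb the constant $2$ coming from~\eqref{eq:2.15aa} into the combination $2-s^2$. The regularity needed to differentiate $v(\cdot,h)$ with respect to $h$ (or equivalently $u(\cdot,\alpha)$ with respect to $\alpha$) is guaranteed by the Feynman--Hellmann proposition stated earlier in Section~\ref{ss2.1}.
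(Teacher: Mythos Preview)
Your proposal is correct and follows essentially the same route as the paper: both start from~\eqref{eq:2.15aa}, perform the change of variables $t=\sqrt{\alpha}\,s$ to obtain $\int t^2 u^2\,dt=\alpha\int s^2 v^2\,ds$, apply the chain rule via $\tfrac{dh}{d\alpha}=-\tfrac{3}{2}\alpha^{-1}h$, and use the $L^2$-normalization of $v(\cdot,h)$ to rewrite the result in terms of $\int(2-s^2)v^2\,ds$. The only cosmetic difference is that you invoke normalization before differentiating (rewriting $\int s^2 v^2=2-J(h)$ first), whereas the paper differentiates $\alpha\int s^2 v^2\,ds$ directly and only at the end uses $\int v^2=1$; the messy constant algebra you flag is exactly this, and once cleaned up the two computations are line-for-line equivalent.
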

\begin{proof}
We recall from~\eqref{e:vh-ualpha} with $h=\alpha^{-3/2}$ that 
\begin{align}
\label{e:u-v}
u(t,\alpha) = (T_{\alpha^{-1/2}}v)(t,h) = \alpha^{-1/4} v(\alpha^{-1/2}t,\alpha^{-3/2}).
\end{align}
From~\eqref{eq:2.15aa}, we deduce,  
\begin{align*}
\lambda''(\alpha) & = 2 - \d_\alpha \left( \int_\R x^2 \alpha^{-1/2}|v(\alpha^{-1/2}x,\alpha^{-3/2})|^2 dx\right)
 =  2 - \d_\alpha \left( \alpha \int_\R  s^2 |v(s,\alpha^{-3/2})|^2 ds\right) \\
  &  =  2 - \left( \int_\R s^2 |v(s,\alpha^{-3/2})|^2 ds\right) - \alpha \d_\alpha \left( \int_\R  s^2 |v(s,\alpha^{-3/2})|^2 ds\right) \\
    &  =  2 -  \int_\R s^2 |v(s,\alpha^{-3/2})|^2 ds  + \frac32 \alpha^{-3/2} \d_h \left( \int_\R  s^2 |v(s,\alpha^{-3/2})|^2 ds\right) \\
        &  =  2 -  \int_\R s^2 |v(s,h)|^2 ds  + \frac32 h \d_h \left( \int_\R  s^2 |v(s,h)|^2 ds\right) ,
\end{align*}
where we have used that $\d_\alpha (f(\alpha^{-3/2})) = -\frac32 \alpha^{-5/2}f'(\alpha^{-3/2})$ in the penultimate equality with $f(h)= \int_\R  s^2 |v(s,h)|^2ds$.
Finally,~\eqref{eq:2.15a} follows from using the normalization of $v(\cdot,h)$.
\end{proof}
\begin{remark}
\label{r:eigenvalues-semiclass-1}
Since we have $\lambda(\alpha) = \alpha^2 E(\alpha^{-3/2})$, we also have the following direct relations between the variations of the eigenvalues of $\mathfrak h _{\mathcal M}(\alpha)$ and those of the semiclassical operator $P_h$ (in the present regime):
\begin{align*}
\frac{\lambda(\alpha)}{\alpha^2} & =  E(h) , \quad h=\alpha^{-3/2} , \nonumber\\
\frac{\lambda'(\alpha)}{\alpha} &= 2 E(h) -\frac32hE'(h) , \quad h=\alpha^{-3/2} , \\  
\lambda''(\alpha) &= 2 E(h) - \frac{9}{4}hE'(h)+ \frac14h^2E''(h) , \quad h=\alpha^{-3/2} .  
\end{align*}
\end{remark}
 
\subsection{Double well analysis}
The operator $P_h$ defined in~\eqref{eq:scl} has the form $P_h:=-h^2  \frac{d^2}{ds^2}  + V(s)$ with $$V(s) = (s^2/2 -1)^2$$ and the standard multidimensional semiclassical analysis (see for example Helffer-Robert \cite{HeRo}, Helffer-Sj\"ostrand \cite{HeSj}
 and references therein) can be used. The potential is a symmetric double well potential and the two minima occur at $\pm \sqrt{2}$ with energy $0$. The quadratic approximation at the bottom leads to the following result~\cite{HeSj}. 
\begin{lemma}[Bottom of the spectrum]
\label{l:bottom}
For all $\beta\in\R_+\setminus\sqrt{2}(2\mathbb{N}+1)$, there are~$N_\beta \in \N , h_0>0$ such that 
$$
  \Sp(P_h)  \cap (-\infty , \beta h)    
  = \big\{E_n(h), n \in \{1,\dots , N_\beta\} \big\} , \quad \text{ uniformly for }  h \in (0,h_0) ,
$$
with $0< E_n(h)<E_{n+1}(h) < \beta h$ for all $n \in \{1,\dots , N_\beta-1\}$ and $h \in (0,h_0)$. Moreover, as $h \to 0^+$, we have
  \begin{align}\label{eq:asbah}
  &E_{2j+1} (h) \sim  \sqrt{2} \, ( 2j+1)  \, h \,,  \\
\label{eq:asbbh}
 &  E_{2j+2}(h) -  E_{2j+1}(h) = \mathcal O (h^{+\infty})\, ,
   \end{align}
   uniformly  for all $j \in \N$ such that $2j+2\leq N_\beta$.
\end{lemma}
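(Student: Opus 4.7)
The plan is to apply the standard double-well semiclassical analysis to the symmetric potential $V(s) = (s^2/2-1)^2$, which vanishes to second order at the two symmetric minima $s = \pm\sqrt{2}$ with $V''(\pm\sqrt{2}) = 4$. The harmonic approximation at each well is $-h^2\d_s^2 + 2(s \mp \sqrt{2})^2$, whose spectrum consists of the numbers $(2n+1)\sqrt{2}\, h$ for $n \in \N$, and this will dictate the asymptotics~\eqref{eq:asbah}. The pairing structure~\eqref{eq:asbbh} will come from the $\Z/2\Z$ symmetry combined with exponential decay of eigenfunctions in the classically forbidden region between the two wells.

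First I would exploit the invariance of $P_h$ under the reflection $s\mapsto -s$. Decomposing $L^2(\R) = L^2_{\mathrm{even}}\oplus L^2_{\mathrm{odd}}$, we obtain $P_h = P_h^N\oplus P_h^D$, where $P_h^N$ (resp.\ $P_h^D$) is the Neumann (resp.\ Dirichlet) realization of $-h^2\d_s^2+V(s)$ on $(0,+\infty)$. Each of these operators features a single potential well at $s=\sqrt{2}$. On the upper-bound side, I would construct quasimodes from Hermite functions of the rescaled harmonic oscillator $-h^2\d_s^2+2(s-\sqrt{2})^2$ localized near $\sqrt{2}$ by a smooth cutoff; because their natural width is $h^{1/2}$, they are compatible with either boundary condition at $0$ up to an $\mathcal{O}(h^\infty)$ error, and they sit at energy $(2n+1)\sqrt{2}\,h + \mathcal{O}(h^{3/2})$.

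Second, on the lower-bound side, I would apply Agmon-type estimates (see \cite{HeSj,DiSj}): any $L^2$-normalized eigenfunction of $P_h^N$ or $P_h^D$ with eigenvalue $\leq \beta h$ decays exponentially away from $\sqrt{2}$ with a rate governed by the Agmon distance $d_A(s,\sqrt{2})=\int_{\sqrt{2}}^{s}\sqrt{V(\sigma)}\, d\sigma$. Comparing these eigenvalues with those of the harmonic oscillator via the min-max principle and the quasimode construction yields a one-to-one correspondence with the levels $(2n+1)\sqrt{2}\,h$ up to $\mathcal{O}(h^{3/2})$. The assumption $\beta\in\R_+\setminus \sqrt{2}(2\N+1)$ guarantees that no harmonic oscillator level equals $\beta$, so the counting function of $P_h$ below $\beta h$ stabilizes at some $N_\beta$ for small $h$.

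Finally, for~\eqref{eq:asbbh}, the key observation is that $P_h^N$ and $P_h^D$ have the same single-well potential on $(0,+\infty)$ and differ only by the boundary condition at $0$, i.e.\ in a region where every eigenfunction of energy $\leq \beta h$ is exponentially small, of order $e^{-c/h}$ by Agmon. A standard perturbation-of-boundary-condition argument (or a direct comparison of quadratic forms combined with interpolation inequalities at $0$) then shows that the $n$-th eigenvalues of $P_h^N$ and $P_h^D$ differ by $\mathcal{O}(h^\infty)$. Interleaving their spectra and noting that the Neumann eigenvalue is bounded above by the corresponding Dirichlet one gives the pairing $E_{2j+1} < E_{2j+2}$ with $E_{2j+2}-E_{2j+1} = \mathcal{O}(h^\infty)$. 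The main obstacle is really this last tunneling estimate, but it is entirely classical and can be imported verbatim from~\cite{HeSj,DiSj}; everything else reduces to harmonic approximation and Agmon estimates for a single well.
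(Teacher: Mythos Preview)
Your outline is correct and follows precisely the standard double-well analysis of \cite{HeSj,DiSj}, which is all the paper does here: the lemma is stated as a direct consequence of \cite{HeSj} (with the remark that \eqref{eq:asbbh} is a weak form of the tunneling estimate) and no independent proof is given. Your reduction to the Neumann/Dirichlet pair on $(0,+\infty)$, harmonic approximation with Agmon localization, and the boundary-perturbation argument for the pairing are exactly the ingredients of that reference, so there is nothing to add.
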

Note that~\eqref{eq:asbbh} is a weak form of the tunneling analysis. The asymptotics \eqref{eq:asba} and \eqref{eq:asbb} are straightforward consequences of~\eqref{eq:asbah}-\eqref{eq:asbbh} together with~\eqref{eq:corr1}.

 Outside the minima, the potential $V$ admits a critical point at $s=0$ corresponding to a local maximum with $V(0)=1$. Except at this energy $1$, one can describe the whole spectrum by using Bohr-Sommerfeld formulas which are specific to the dimension one.
 
We now recall the localization properties of the eigenfunctions in the so-called classical region determined by their  corresponding eigenvalue. 
 For a given energy $E$, we define the classically allowed region by 
 $$
 K_E := V^{-1} ((-\infty,E]) .
 $$
 
\begin{prop}\label{local}
Let $E \in \R$. 
For any sequence $(E_{j(h)} (h), v_{j(h)}(h))$ of eigenpairs of $P_h$ such that $$\limsup_{h\ar 0} E_{j(h)}(h) \leq E$$ and $\|v_{j(h)}(h)\|_{L^2(\R)}=1$, and any closed interval $I$ such that
 $I \cap K_E  =\emptyset$, we have for any $k$ $$\| v_{j(h)}(h)\|_{B^k(I)}=\mathcal O (h^\infty),$$ where $$B^k (I)=\{ u\in L^2(I), s^\ell u^{(m)} (s) \in L^2(I)\,,\forall (\ell,m)  \mbox{ s.t. } \ell + m \leq k\}\,.$$
 \end{prop}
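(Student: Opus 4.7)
The proposition is a classical Agmon-type localization result: eigenfunctions associated to energies $\leq E$ decay at any semiclassically-negligible rate in the classically forbidden region $\{V > E\}$. I would proceed in three stages: a weighted $L^2$ estimate, an elliptic bootstrap to derivatives via the eigenvalue equation, and handling of polynomial weights at infinity.

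\emph{Step 1: $L^2$-Agmon estimate.} Since $I$ is closed, $I \cap K_E = \emptyset$ and $\limsup_{h \to 0} E_{j(h)}(h) \leq E$, fix $\delta > 0$ small enough that $V \geq E + 2\delta$ on an open neighborhood of $I$ and $E_{j(h)}(h) \leq E + \delta/2$ for $h$ small. Construct a nonnegative Lipschitz weight $\phi$ on $\R$ vanishing on $\{V \leq E + \delta\}$ and satisfying $|\phi'(s)|^2 \leq (V(s) - E - \delta)_+$ almost everywhere, obtained as a truncation of the Agmon distance to $\{V \leq E + \delta\}$ in the metric $(V - E - \delta)_+\, ds^2$; in particular $\phi \geq c > 0$ on $I$. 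Setting $w_h := e^{\phi/h} v_{j(h)}$ and invoking the standard Agmon identity
\[
\Re \langle e^{\phi/h}(P_h - E_{j(h)}(h))v_{j(h)}, w_h \rangle_{L^2(\R)} = \|h \partial_s w_h\|_{L^2(\R)}^2 + \int_\R \big(V - E_{j(h)}(h) - |\phi'|^2\big)|w_h|^2 \, ds,
\]
the left-hand side vanishes by the eigenvalue equation. Since $V - E_{j(h)}(h) - |\phi'|^2 \geq \delta/2$ on $\{\phi > 0\}$, and since $\{\phi = 0\} \subset \{V \leq E + \delta\}$ is a bounded set on which $|v_{j(h)}|$ is controlled by $\|v_{j(h)}\|_{L^2}=1$, this forces $\|w_h\|_{L^2(\R)} = O(1)$, and hence $\|v_{j(h)}\|_{L^2(I)} = O(e^{-c/h}) = O(h^\infty)$.

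\emph{Step 2: derivatives and polynomial weights.} Pick a slightly enlarged closed interval $I'$ still disjoint from $K_E$ with $I$ in its interior. Standard elliptic bootstrapping on $h^2 v_{j(h)}'' = (V - E_{j(h)}(h))v_{j(h)}$, combined with a smooth cutoff supported in $I'$ and equal to $1$ on $I$, gives inductively $h^{2m}\|v_{j(h)}^{(m)}\|_{L^2(I)} \leq C_m \|v_{j(h)}\|_{L^2(I')}$ for every $m \in \N$. Applying Step~1 on $I'$ then yields $\|v_{j(h)}^{(m)}\|_{L^2(I)} = O(h^\infty)$ for every $m$. If $I$ is bounded, polynomial factors $s^\ell$ are bounded on $I$ and we are done. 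If $I$ is unbounded, then because $V(s) = (s^2/2 - 1)^2 \to +\infty$ like $s^4$, the Agmon weight $\phi(s)$ grows like $|s|^3$, so $e^{-\phi(s)/h}$ absorbs any polynomial $s^\ell$, yielding the full $B^k(I)$ bound.

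\emph{Main obstacle.} The only step requiring genuine care is the construction of the Lipschitz weight $\phi$: it must simultaneously vanish near $K_E$, satisfy the eikonal inequality, and grow sufficiently at infinity. Since $V$ is a polynomial tending to $+\infty$ and $I$ lies at positive Agmon distance from $K_E$, this reduces to a classical ODE construction, and the whole argument is essentially textbook semiclassical analysis in the spirit of Helffer--Sj\"ostrand.
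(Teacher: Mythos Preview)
Your proposal is correct and is precisely the Agmon-estimate approach that the paper invokes: the paper does not give an argument but simply refers to the classical Agmon estimates of Helffer--Sj\"ostrand and Helffer, noting that they actually yield exponential (not just $O(h^\infty)$) decay in the classically forbidden region. Your Steps~1--2 are exactly a sketch of that standard machinery, and your handling of the unbounded case via the cubic growth of the Agmon weight is the right idea (though in a fully detailed write-up one would truncate $\phi$ to justify the weighted identity and use $(1-\varepsilon)\phi$ to extract the polynomial weights cleanly).
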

 This is a consequence of classical Agmon estimates, see~\cite{HeSj} or~\cite{He88}. The latter actually yield an exponential decay of eigenfunctions away from the classically allowed region $K_E$, but this  is not needed in our analysis.
  
\subsection{Semiclassical measures}\label{ss3.3}
We now only consider the semiclassical regime $h \to 0^+$ and consider the limit measures for the densities $v(x,h)^2dx$.
Taking advantage of the parity of the potential $V =\left(\frac{x^2}{2}-1\right)^2$, we define for any given energy $E \geq 0$,
\begin{align}
\label{e:def-xpm}
x_+(E) = \sqrt{2+2 \sqrt{E}} \text{ if } E \in [0,+\infty), \quad \text{ and }\quad  x_-(E) = \left\{ \begin{array}{ll}  \sqrt{2-2 \sqrt{E}} & \text{if } E\in [0,1], \\
0 & \text{if } E \geq 1 .
\end{array}
\right.
\end{align}
Notice that $x_\pm$ is defined so that for all $E\geq0$,
$$
[x_-(E) ,x_+(E)] = \{x\geq 0, V(x)\leq E\} = K_E \cap \R_+ 
$$
is the classically allowed region intersected with $\R^+$. That is to say,
$$
K_E = [-x_+(E),x_+(E)] \quad \text{if }E\geq 1 , \quad K_E = [-x_+(E) ,-x_-(E)]  \cup [x_-(E) ,x_+(E)] \quad \text{if }E\leq 1 .
$$
The densities $v(x,h)^2dx$ satisfy the following asymptotic repartition. 
\begin{prop}
\label{l:semi-meas}
Assume $v$ solves~\eqref{e:eigen-semi} with $E(h) \to E$ and $h\to 0^+$, then $v(x,h)^2dx \rightharpoonup \mathfrak{m}_E$ in the sense of measures in $\R$ where $\mathfrak{m}_E$ is the nonnegative Radon measure given by
\begin{align}
\mathfrak{m}_E & = \frac12 (\delta_{-\sqrt{2}} + \delta_{\sqrt{2}}) \quad  \text{ if } E=0 , \label{mu-E=0} \\
\mathfrak{m}_E & =  \frac{C(E) }{2} \frac{ \mathds{1}_{[-x_+(E) ,-x_-(E)]}(x) dx}{\sqrt{E-V(x)}} + \frac{C(E) }{2} \frac{ \mathds{1}_{[x_-(E) ,x_+(E)]}(x) dx}{\sqrt{E-V(x)}}, \quad  \text{ if } E \in (0,1) , \label{mu-E=0-1}\\
\mathfrak{m}_E & = \delta_0 \quad  \text{ if } E= 1  ,\label{mu-E=1}\\
\mathfrak{m}_E & =  \frac{C(E) }{2}   \frac{ \mathds{1}_{[-x_+(E) ,x_+(E)]}(x) dx}{\sqrt{E-V(x)}}, \quad  \text{ if } E >1 \,,\label{mu-E=1-infty}
\end{align}
where 
\begin{align}
\label{eq:def-CE}
C(E)  = \left( \int_{x_-(E)}^{x_+(E)}  (E-V(x))^{-\frac12} dx\right)^{-1} , \quad E\in (0,1) \cup (1,\infty).
\end{align}
\end{prop}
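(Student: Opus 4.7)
The plan is to parallel the strategy already used in Proposition~\ref{l:regime-Einfty}: extract a semiclassical measure $\mu$ on $T^*\R$ from the sequence $(v(\cdot,h))$, establish that $\mu$ is an $H_p$-invariant probability measure supported on the level set $\{p=E\}$ of the symbol $p(x,\xi)=\xi^2+V(x)$, and then identify it by classifying such invariant measures at each energy $E \in \R_+$.

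For the extraction step, I would first establish compactness at infinity and $h$-oscillation. Taking the $L^2$ inner product of~\eqref{e:eigen-semi} against $v$ yields the energy identity
\begin{equation*}
\|h\d_x v(\cdot,h)\|_{L^2(\R)}^2 + \|V^{1/2}v(\cdot,h)\|_{L^2(\R)}^2 = E(h) = \mathcal{O}(1).
\end{equation*}
Since $V(x)\sim x^4/4$ at infinity, the second term uniformly controls $\|x^2 v(\cdot,h)\|_{L^2(\R)}$, giving tightness, while the first provides $h$-oscillation. This guarantees that any weak-$*$ limit of the Wigner transform of $v(\cdot,h)$ is a probability measure $\mu$ on $T^*\R$ whose pushforward $\pi_*\mu$ by $\pi(x,\xi)=x$ is the weak limit of $|v(\cdot,h)|^2 dx$. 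Standard semiclassical pseudodifferential calculus applied to~\eqref{e:eigen-semi} then forces $\supp \mu \subset \{p=E\}$ and $H_p \mu = 0$, exactly as used in Proposition~\ref{l:regime-Einfty}. Finally, since $P_h$ has simple spectrum (one-dimensional Sturm-Liouville) and $V$ is even, each eigenfunction $v(\cdot,h)$ has definite parity, so $|v(\cdot,h)|^2$ is even and $\pi_*\mu$ is invariant under $x\mapsto -x$.

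The heart of the argument is the classification of $H_p$-invariant probability measures on $\{p=E\}$. For $E=0$, the level set reduces to the two non-degenerate wells $\{(\pm\sqrt{2},0)\}$, and $x\mapsto -x$ symmetry imposes equal masses $1/2$, yielding~\eqref{mu-E=0}. For $E\in (0,1)$, the level set is the disjoint union of two smooth closed orbits $\gamma_\pm$ (one around each well), on each of which the unique $H_p$-invariant probability measure is the normalized Liouville measure; the symmetry splits $\mu$ evenly across $\gamma_\pm$, and a direct computation using $dt = dx/(2|\xi|) = dx/(2\sqrt{E-V(x)})$ along each loop together with the period formula produces the constant $C(E)$ of~\eqref{eq:def-CE} and the density~\eqref{mu-E=0-1}. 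For $E>1$, $\{p=E\}$ is a single closed orbit encircling both wells and carries a unique invariant probability measure, the normalized Liouville measure, which projects to~\eqref{mu-E=1-infty}.

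The delicate case will be $E=1$, where the level set is the separatrix consisting of the hyperbolic saddle $(0,0)$ together with a figure-eight of heteroclinic orbits returning to it. The key observation is that any non-trivial heteroclinic orbit is parametrized by the full real line under the flow (with infinite time of approach to $(0,0)$ at both ends), so any finite $H_p$-invariant measure supported on such an orbit must vanish identically. Consequently $\mu$ concentrates at the fixed point, $\mu=\delta_{(0,0)}$, and $\pi_*\mu=\delta_0$ as asserted in~\eqref{mu-E=1}. This figure-eight/separatrix case is the main technical obstacle; in all four cases the limiting measure is uniquely determined, so the convergence holds along the full sequence $h\to 0^+$, not merely a subsequence.
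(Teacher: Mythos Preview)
Your proposal is correct and follows essentially the same route as the paper: extract a semiclassical measure, use the energy identity for tightness and $h$-oscillation so that $\mu$ is a probability, obtain $\supp\mu\subset\{p=E\}$ and $H_p\mu=0$, then classify invariant probability measures on each energy shell; for $E\in(0,1)$ the paper equivalently restricts to the half-line (Dirichlet/Neumann) and cites single-well results, which amounts to your parity/symmetry splitting, and for $E=1$ the argument is identical (the non-closed homoclinic arcs carry no finite invariant measure, forcing $\mu=\delta_{(0,0)}$). One terminological quibble: the separatrix orbits are homoclinic to $(0,0)$ rather than heteroclinic, but your dynamical reasoning is the right one.
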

Note that the expression for $E\in(0,1)$ is also valid for $E>1$. The continuity of the map $E\mapsto \mathfrak{m}_E$  at $E=1$ for the weak-* topology of measures is investigated in Proposition~\ref{e:continuity-mE} below.
\begin{proof}[Proof of Proposition~\ref{l:semi-meas}]
Note first that all eigenfunctions being either odd or even, the measures $v(x,h)^2dx$ are all even and so are their limits. 
Next, once restricted to the half real line (thus solution to a Dirichlet or a Neumann problem on $\R^+$), the case $E\neq 1$ is proved e.g. in~\cite{HMR} (Theorem 4.1) (before the introduction of the theory of semiclassical measures)  or more recently in \cite[Theorem~1.6]{LL:22} (see also~\cite[pp 139-143]{GS:94} or \cite[Section~6~pp~190--198]{Olver:book} for different approaches on a similar problem). 

Finally, the case $E=1$ follows from~\cite[Section~5]{CdVPa}. For the sake of completeness, we give a short proof inspired from~\cite[Section~2.2, Proof of Theorem~1.6]{LL:22} in the case $E=1$. 
In case  $E(h) \to 1$, any semiclassical measure $\mu$ (see e.g.~\cite{LL:22} for a definition and a proof of these properties) is a probability measure on $\R^2$, supported by $p^{-1}(1):=\{(x,\xi) \in \R^2 , p(x,\xi)= 1 \}$ where $p(x,\xi)=\xi^2 +\big(\frac{x^2}{2}-1\big)^2$, and moreover invariant by the Hamilton flow of $p$ inside $p^{-1}(1)$, i.e. $0=H_p\mu = \left(2\xi \d_x -2x\big(\frac{x^2}{2}-1\big)\d_\xi \right)\mu$. Now the set $p^{-1}(1)$ is partitioned into three $H_p$-invariant disjoint sets:
$$
p^{-1}(1) = \{(0,0)\}\sqcup \mathcal{C}_+ \sqcup \mathcal{C}_- , \quad \text{with} \quad   \mathcal{C}_\pm := \{(x,\xi) \in \R^2 ,\pm x >0 , p(x,\xi)= 1 \} .
$$
As a consequence, $\mu = \mu_- + \alpha \delta_{(0,0)} + \mu_-$ where $\alpha \in [0,1]$ and $\mu_\pm$ is an invariant finite measure supported by $\mathcal{C}_\pm$. But since $\mathcal{C}_\pm$ is not compact, the only invariant finite measure it carries is $\mu_\pm=0$. Since $\mu$ is a probability measure, we deduce $\mu = \delta_{(0,0)}$ and $\mathfrak{m}_1 = \pi_* \mu = \delta_0$ (see~\cite{LL:22} for a proof of the first equality). Uniqueness of the limit measure shows that the full sequence converges.
\end{proof}

\begin{remark}
Note that in the single well problem~\cite{LL:22}, it is enough that $v(x,h)$ solves~\eqref{e:eigen-semi} approximately, that is to say, with a $o(h)$ remainder, in order to characterize the limit measure $\mu$ and $\mathfrak{m}_E$. In the double well regime, we actually use here in an essential way that $v(x,h)$ is a genuine eigenfunction, i.e. solves~\eqref{e:eigen-semi} exactly (it would actually work if $v(x,h)$ would solve~\eqref{e:eigen-semi} approximately with a $O(e^{-S_1/h})$ remainder, with $S_1$ sufficiently large) due to the tunnelling effect between the two wells, see~\cite{HeSj}.
\end{remark}

\begin{cor}\label{cor3.4}
Let $E\geq 0$, assume that $\alpha \to +\infty$ and $\frac{\lambda (\alpha)}{\alpha^2} \to E$, then  
\begin{equation}
\label{e:lim--e}
\alpha^{-1} \lambda'(\alpha) \to \Phi(E) \,,
\end{equation}
with 
\begin{equation}
\label{e:def-Phi} \Phi(E) := \left\{ \begin{array}{ll}
0, & \text{if } E = 0\,  ,\\
C(E)  F(E) , & \text{if } E\in (0,1) \cup (1,\infty) \,, \\
2 , & \text{if } E=1\,  .
\end{array}
\right.
\end{equation}
where, for $E\in (0,1) \cup (1,\infty)$,  $C(E) $ is defined in~\eqref{eq:def-CE} and
\begin{equation}
\label{e:def-F(E)}
F(E): =  \int_{x_-(E)}^{x_+(E)} \frac{2-x^2}{\sqrt{E-V(x)}}dx=\int_{ \sqrt{(2-2 \sqrt{E})_+}}^{\sqrt{2+2 \sqrt{E}}}\frac{2- x^2}{\sqrt{E-(x^2/2-1)^2}} dx 
, \quad E\in (0,1) \cup (1,\infty)  .
\end{equation}
where $(z)_+=\max\{z,0\}$.
\end{cor}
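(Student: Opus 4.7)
\textbf{Proof plan for Corollary~\ref{cor3.4}.}
The strategy is to translate the problem into the semiclassical setting via the rescaling~\eqref{eq:rescal} and then read off the limit of $\int_\R x^2 v(x,h)^2\, dx$ from the semiclassical measure $\mathfrak{m}_E$ provided by Proposition~\ref{l:semi-meas}. Under the assumption $\alpha \to +\infty$ with $\lambda(\alpha)/\alpha^2 \to E$, setting $h = \alpha^{-3/2}$ and $v(\cdot,h) = T_{\sqrt{\alpha}} u(\cdot,\alpha)$ puts us exactly in the framework of Proposition~\ref{l:semi-meas} with $E(h) \to E$. The semiclassical reformulation~\eqref{e:lambda'h} of the Feynman--Hellmann formula, combined with $\alpha^{-1} = h^{2/3}$, gives
\begin{equation*}
\alpha^{-1} \lambda'(\alpha) = 2 - \int_\R x^2 v(x,h)^2\, dx ,
\end{equation*}
so it suffices to show $\int_\R x^2 v^2\, dx \to 2 - \Phi(E)$ as $h \to 0^+$.

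To deduce this from Proposition~\ref{l:semi-meas}, the main obstacle is that $x^2$ is unbounded, hence not directly a legitimate test function for weak convergence of measures. I would handle this by establishing a uniform fourth-moment bound: multiplying~\eqref{e:eigen-semi} by $v$ and integrating gives $h^2 \|v'\|_{L^2}^2 + \int_\R V(x) v^2\, dx = E(h)$, and since $V(x) = (x^2/2-1)^2 \sim x^4/4$ at infinity, this yields $\int_\R x^4 v^2\, dx \leq C$ uniformly in $h$. The tail estimate $\int_{|x|\geq R} x^2 v^2\, dx \leq C R^{-2}$, combined with the weak convergence $v^2\, dx \rightharpoonup \mathfrak{m}_E$ and the compactness of $\supp \mathfrak{m}_E \subset K_E$, then yields $\int_\R x^2 v^2\, dx \to \int_\R x^2\, d\mathfrak{m}_E$ via a standard cutoff argument.

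It then remains to compute $\int_\R x^2\, d\mathfrak{m}_E$ case by case from~\eqref{mu-E=0}--\eqref{mu-E=1-infty}. For $E = 0$, $\mathfrak{m}_0 = \tfrac12(\delta_{-\sqrt{2}} + \delta_{\sqrt{2}})$ gives $\int x^2\, d\mathfrak{m}_0 = 2$, whence $2 - \int x^2 d\mathfrak{m}_0 = 0 = \Phi(0)$. For $E = 1$, $\mathfrak{m}_1 = \delta_0$ gives $\int x^2\, d\mathfrak{m}_1 = 0$, whence $2 - \int x^2 d\mathfrak{m}_1 = 2 = \Phi(1)$. For $E \in (0,1) \cup (1,+\infty)$, the evenness of $\mathfrak{m}_E$ together with~\eqref{mu-E=0-1}--\eqref{mu-E=1-infty} yields $\int_\R x^2\, d\mathfrak{m}_E = C(E) \int_{x_-(E)}^{x_+(E)} x^2 (E-V(x))^{-1/2}\, dx$, while the definition~\eqref{eq:def-CE} of $C(E)$ gives $2 = 2 C(E) \int_{x_-(E)}^{x_+(E)} (E-V(x))^{-1/2}\, dx$. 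Subtracting,
\begin{equation*}
2 - \int_\R x^2\, d\mathfrak{m}_E = C(E) \int_{x_-(E)}^{x_+(E)} \frac{2 - x^2}{\sqrt{E - V(x)}}\, dx = C(E) F(E) = \Phi(E),
\end{equation*}
which completes the identification and concludes the proof. Apart from the tightness argument for the unbounded test function $x^2$, all remaining steps are a direct consequence of Proposition~\ref{l:semi-meas} and elementary manipulations of the explicit formulas for $\mathfrak{m}_E$.
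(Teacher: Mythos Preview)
Your proof is correct and follows essentially the same route as the paper: rewrite $\alpha^{-1}\lambda'(\alpha)=2-\int x^2 v^2\,dx$ via~\eqref{e:lambda'h}, pass to the limit using Proposition~\ref{l:semi-meas}, and compute $\int x^2\,d\mathfrak{m}_E$ case by case. The one place where you are actually \emph{more} careful than the paper is in justifying the convergence $\int x^2 v^2\,dx\to\int x^2\,d\mathfrak{m}_E$ for the unbounded test function $x^2$: the paper's proof simply writes ``using Proposition~\ref{l:semi-meas}'' without comment, implicitly relying on the Agmon-type decay of Proposition~\ref{local}, whereas you give a self-contained tightness argument via the fourth-moment bound $\int V v^2\,dx\le E(h)$ coming from the energy identity. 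Both approaches work; yours is slightly more elementary.
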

Note that the function $F$ in~\eqref{e:def-F-00} in Theorem~\ref{th1.4} is the restriction to $(1,+\infty)$ of the function defined in~\eqref{e:def-F(E)}.
The properties of the limit object in~\eqref{e:lim--e}, and in particular of the function $F(E)$ are studied in the next section.
Note that this corollary is not sufficient for proving that there is no critical point near the energy $E=0$.
\begin{proof}
Recalling~\eqref{e:lambda'h},  
as $h \to 0$ and $E(h) = h^{4/3}\lambda(\alpha) \to E$, we have, using Proposition~\ref{l:semi-meas}, 
$$
h^{2/3}\lambda'(\alpha) = 2 - \|x v(\cdot , h )\|^2_{L^2} \to 2 - \int_\R x^2 d\mathfrak{m}_E(x)  = \int_\R (2-x^2) d\mathfrak{m}_E(x) . 
$$
That is to say, according to~\eqref{mu-E=0}--\eqref{mu-E=1-infty} respectively, 
\begin{align*}
h^{2/3}\lambda'(\alpha) & \to 0  \quad  \text{ if } E=0 ,  \\
h^{2/3}\lambda'(\alpha) & \to 2 -  C(E)  \int_{x_-(E)}^{x_+(E)} \frac{x^2 dx}{\sqrt{E-V(x)}}=C(E)  \left(  \int_{x_-(E)}^{x_+(E)} \frac{2-x^2}{\sqrt{E-V(x)}}dx\right), \quad  \text{ if } E \in (0,1) , \\
h^{2/3}\lambda'(\alpha) & \to 2 \quad  \text{ if } E= 1  , \\
h^{2/3}\lambda'(\alpha) & \to 2 -  C(E)  \int_{0}^{x_+(E)} \frac{x^2 dx}{\sqrt{E-V(x)}} = C(E)  \left(  \int_{0}^{x_+(E)} \frac{2-x^2}{\sqrt{E-V(x)}}dx\right), \quad  \text{ if } E >1  . 
\end{align*}
\end{proof}

\subsection{Study of $\mathfrak{m}_E$ and $F(E)$ near $E=1$} \label{ss3.5}
Our goal in this section is to prove continuity of the family $\mathfrak{m}_E$ and the function $C(E)F(E)$ through $E=1$.
As a preliminary, we study properties  as $E \to 1^\pm$ of the integral
$$
\mathsf{F}_\varphi(E): =  \int_{x_-(E)}^{x_+(E)} \frac{\varphi(x)}{\sqrt{E-V(x)}}dx=\int_{\sqrt{(2-2 \sqrt{E})_+}}^{\sqrt{2+2 \sqrt{E}}}\frac{\varphi(x)}{\sqrt{E-(x^2/2-1)^2}} dx ,
$$ 
defined for $\varphi \in C^0(\R)$ and $E \in (0,1) \cup (1,+ \infty)$.
\begin{lemma}
\label{l:lemma-Fphi}
There is $C>0$ such that for all $\delta \in (0,1)$, there is $C_\delta>0$ such that for all $\varphi \in C^1(\R)$, 
$$
\left|\mathsf{F}_\varphi(E) -\varphi(0)\mathsf{F}_1^{\delta,+}(E)\right| \leq C \delta\|\varphi'\|_{L^\infty(0,1)} +C_\delta \|\varphi\|_{L^\infty(0,3)},\quad  \text{ for all } E\in (1,9/4) ,
$$ 
where 
 \begin{align}
 \label{e:asympt+}
0 \leq  \mathsf{F}_1^{\delta,-}(E) -\int_0^{\delta/\sqrt{2\eps}}\frac{dy}{\sqrt{y^2+1}}  \leq C \delta^2 \int_0^{\delta/\sqrt{2\eps}}\frac{dy}{\sqrt{y^2+1}}, \quad \sqrt{E}=1+\eps ,
 \end{align}
and 
\begin{align*}
\left|\mathsf{F}_\varphi(E) -\varphi(0)\mathsf{F}_1^{\delta,-}(E)\right|  \leq C \delta\|\varphi'\|_{L^\infty(0,1)} +C_\delta \|\varphi\|_{L^\infty(0,3)},\quad  \text{ for all } E\in (1/4,1) ,
\end{align*}
where 
 \begin{align}
 \label{e:asympt-}
0 \leq  \mathsf{F}_1^{\delta,-}(E) -\frac{\sqrt{2}}{\sqrt{2-\eps}}\int_1^{\delta/\sqrt{2\eps}}\frac{dy}{\sqrt{y^2-1}}  \leq C \delta^2 \int_1^{\delta/\sqrt{2\eps}}\frac{dy}{\sqrt{y^2-1}} , \quad \sqrt{E}=1-\eps .
 \end{align}
\end{lemma}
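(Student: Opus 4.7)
The plan is to localize the analysis near $x=0$, where the singular contribution as $E \to 1$ is concentrated. I would first split $\mathsf{F}_\varphi(E) = I_{\mathrm{in}}(E) + I_{\mathrm{out}}(E)$ at an intermediate point $x = \delta$, with $I_{\mathrm{in}}$ carrying the integral on $[x_-(E), \delta]$ (empty if $x_-(E) > \delta$) and $I_{\mathrm{out}}$ the integral on $[\delta, x_+(E)]$. The outer integrand $\varphi(x)/\sqrt{E - V(x)}$ has only an integrable square-root singularity at $x = x_+(E)$ (since $V'(x_+(E)) \neq 0$ for $E$ near $1$ because $x_+(1) = 2$ and $V'(2) = 4$) and is otherwise non-singular on $[\delta, x_+(E)]$, which gives $|I_{\mathrm{out}}(E)| \leq C_\delta \|\varphi\|_{L^\infty(0, 3)}$ uniformly for $E \in (1/4, 9/4)$.

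Next, I would treat the inner piece by writing $\varphi(x) = \varphi(0) + (\varphi(x) - \varphi(0))$ and using $|\varphi(x) - \varphi(0)| \leq x \|\varphi'\|_{L^\infty(0,1)}$ via the mean value theorem. The key tool here is the factorization
\[
E - V(x) = \left(\sqrt{E} + 1 - \frac{x^2}{2}\right)\left(\sqrt{E} - 1 + \frac{x^2}{2}\right),
\]
in which the first factor is bounded below by $1$ for $\delta \leq 1$ and $E \geq 1/4$. Substituting $u = \sqrt E - 1 + x^2/2$ then gives an explicit antiderivative, yielding $\int_{x_-(E)}^\delta x/\sqrt{E-V(x)}\, dx \leq \sqrt{2}\,\delta$ uniformly, so the overall error is $C\delta \|\varphi'\|_{L^\infty(0,1)}$.

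It then remains to produce the asymptotics of $\mathsf{F}_1^{\delta, \pm}(E) := \int_{x_-(E)}^\delta \frac{dx}{\sqrt{E - V(x)}}$ claimed in~\eqref{e:asympt+}--\eqref{e:asympt-}. Setting $\sqrt{E} = 1 \pm \eps$ and performing the change of variables $x = \sqrt{2\eps}\, y$, the factorization yields
\[
\mathsf{F}_1^{\delta,+}(E) = \int_0^{\delta/\sqrt{2\eps}} \frac{dy}{\sqrt{(1+y^2)(1+(\eps/2)(1-y^2))}}, \quad \mathsf{F}_1^{\delta,-}(E) = \int_1^{\delta/\sqrt{2\eps}} \frac{dy}{\sqrt{(y^2-1)(1-(\eps/2)(1+y^2))}}.
\]
I would then Taylor-expand the correction factors $(1 \pm (\eps/2)(1 \mp y^2))^{-1/2}$ around a suitable constant (around $1$ for the $+$ case, and around $(1-\eps/2)^{-1/2}$ for the $-$ case), exploiting $\eps y^2 \leq \delta^2/2$ on the integration domain to control the remainder by $O(\delta^2)$. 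In the $-$ case, the pointwise bound $(\eps/2)(1+y^2) \geq \eps/2$, valid for $y \geq 1$, yields the claimed lower bound immediately.

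The main obstacle will be the lower bound in the $+$ case: the correction $(1 + (\eps/2)(1-y^2))^{-1/2} - 1$ is negative on $[0,1]$ and positive on $[1, \delta/\sqrt{2\eps}]$, so the non-negativity of $\mathsf{F}_1^{\delta, +}(E) - \int_0^{\delta/\sqrt{2\eps}} dy/\sqrt{1+y^2}$ is a global rather than pointwise property of the integrand. I would approach this by combining the first-order Taylor expansion with the explicit identity $\int_0^a (y^2 - 1)/\sqrt{1 + y^2}\, dy = \frac{1}{2} a \sqrt{1+a^2} - \frac{3}{2} \operatorname{arcsinh}(a)$, which controls the sign and order of magnitude of the leading term in $\eps$, together with a careful remainder estimate.
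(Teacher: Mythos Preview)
Your approach is essentially identical to the paper's: split $\mathsf{F}_\varphi$ at $x=\delta$, bound the outer piece by $C_\delta\|\varphi\|_{L^\infty(0,3)}$ using that the only singularity on $[\delta,x_+(E)]$ is the integrable square-root one at $x_+(E)$, factor $E-V(x)=(2+\eps-x^2/2)(x^2/2+\eps)$ on the inner piece (with $\sqrt E=1\pm\eps$), and pass to $x=\sqrt{2\eps}\,y$. The paper handles the $\varphi(x)-\varphi(0)$ term by the same mean-value estimate and the same observation that the ``regular'' factor $(2+\eps-\eps y^2)^{-1/2}$ is bounded above and below on $[0,\delta/\sqrt{2\eps}]$ since $\eps y^2\le\delta^2/2$.

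Your caution about the lower bound in \eqref{e:asympt+} is well placed, and in fact slightly \emph{more} careful than the paper. The paper simply records the sandwich
\[
\frac{\sqrt{2}}{\sqrt{2+\eps}}\int_0^{\delta/\sqrt{2\eps}}\frac{dy}{\sqrt{y^2+1}}
\;\le\;\mathsf{F}_1^{\delta,+}(E)\;\le\;
\frac{\sqrt{2}}{\sqrt{2+\eps-\delta^2/2}}\int_0^{\delta/\sqrt{2\eps}}\frac{dy}{\sqrt{y^2+1}}
\]
and declares that \eqref{e:asympt+} follows. Strictly speaking this gives $\mathsf{F}_1^{\delta,+}(E)\ge (1-C\eps)\,I$ rather than $\ge I$, so the literal lower bound ``$0\le$'' in \eqref{e:asympt+} is not a pointwise consequence of this sandwich, exactly as you observed. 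For the applications in the paper (continuity of $\mathfrak m_E$ at $E=1$ and $\mathsf{F}_\varphi(E)\sim\varphi(0)\,\mathsf{F}_1(E)$), only two-sided comparability of $\mathsf{F}_1^{\delta,\pm}(E)$ with the elementary integral is needed, and the sandwich delivers that. Your Taylor-expansion argument with the explicit primitive of $(y^2-1)/\sqrt{1+y^2}$ would genuinely recover the sharper sign statement, but it is more work than the paper actually invests or requires.
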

Note that Lemma~\ref{l:lemma-Fphi} implies that if  $\varphi(0) \neq 0$, then 
\begin{align}
\label{e:to+-infty}
\mathsf{F}_\varphi(E) \sim  \varphi(0) \frac12 \log\left(\frac{1}{|\sqrt{E}-1|} \right) \sim  \varphi(0) \frac12 \log\left(\frac{1}{|E -1|} \right)  \quad \text{as } E\to 1^\pm .  
\end{align}

As a consequence, we get the following proposition. We denote by $\mathcal{M}_c(\R)$ the set of compactly supported Radon measures on $\R$.  
\begin{prop}
\label{e:continuity-mE}
With $\mathfrak{m}_E$ defined by~\eqref{mu-E=0}--\eqref{mu-E=1-infty}, the map $[1,+\infty) \to \mathcal{M}_c(\R)$, $E \mapsto \mathfrak{m}_E$ is weakly-* continuous, that is to say: for all $\varphi \in C^0(\R)$ and all $E_0\in [0,+\infty)$, we have 
$$
\langle \mathfrak{m}_E , \varphi\rangle \to \langle \mathfrak{m}_{E_0} , \varphi\rangle , \quad \text{ as }E \to E_0 \quad(\text{resp. } E\to 0^+\text{ if }E_0=0) .
$$
\end{prop}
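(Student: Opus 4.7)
The plan is to reduce to even test functions $\varphi$ and then split into three cases: $E_0\in(0,1)\cup(1,+\infty)$, $E_0=1$, and $E_0=0$. Since $V$ is even, every $\mathfrak{m}_E$ with $E\geq 0$ is symmetric under $x\mapsto-x$ (including $\mathfrak{m}_0$ and $\mathfrak{m}_1$), so writing $\varphi=\varphi_e+\varphi_o$ with $\varphi_e$ even and $\varphi_o$ odd gives $\langle\mathfrak{m}_E,\varphi\rangle=\langle\mathfrak{m}_E,\varphi_e\rangle$, and one may assume $\varphi$ is even. Under this assumption, for all $E\in(0,1)\cup(1,+\infty)$, formulas~\eqref{mu-E=0-1}--\eqref{mu-E=1-infty} together with~\eqref{eq:def-CE} yield the compact pairing
\begin{equation}
\label{e:pairing-compact}
\langle\mathfrak{m}_E,\varphi\rangle=\frac{\mathsf{F}_\varphi(E)}{\mathsf{F}_1(E)}.
\end{equation}

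For $E_0\in(0,1)\cup(1,+\infty)$, continuity of~\eqref{e:pairing-compact} is a routine dominated convergence argument: after the change of variable $x=\frac{x_+(E)+x_-(E)}{2}+\frac{x_+(E)-x_-(E)}{2}\sin\theta$, $\theta\in[-\pi/2,\pi/2]$, the integral $\mathsf{F}_\varphi(E)$ becomes an integral over the fixed interval $[-\pi/2,\pi/2]$ whose integrand is smooth and uniformly bounded for $E$ in a small neighbourhood of $E_0$ (using that $V'(x_\pm(E_0))\neq 0$, which holds because $E_0\notin\{0,1\}$). This yields continuity of $E\mapsto\mathsf{F}_\varphi(E)$ and of $E\mapsto\mathsf{F}_1(E)>0$, hence of their ratio.

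The main obstacle is the case $E_0=1$, where both $\mathsf{F}_\varphi(E)$ and $\mathsf{F}_1(E)$ diverge logarithmically at the same rate. Since all $\supp(\mathfrak{m}_E)$ for $E$ near $1$ lie in a common compact set $K\subset\R$ and all $\mathfrak{m}_E$ are probability measures, by convolution approximation one may further assume $\varphi\in C^1(\R)$ (still even). Applying Lemma~\ref{l:lemma-Fphi} with a fixed small $\delta>0$ to both $\varphi$ and the constant function $1$ gives, uniformly as $E\to 1^\pm$,
\begin{equation*}
\mathsf{F}_\varphi(E)=\varphi(0)\,\mathsf{F}_1^{\delta,\pm}(E)+O_\delta(1),\qquad \mathsf{F}_1(E)=\mathsf{F}_1^{\delta,\pm}(E)+O_\delta(1),
\end{equation*}
where the remainders are bounded independently of $E$. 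The asymptotics~\eqref{e:asympt+}--\eqref{e:asympt-} show that $\mathsf{F}_1^{\delta,\pm}(E)\to+\infty$ as $E\to 1^\pm$, so dividing numerator and denominator of~\eqref{e:pairing-compact} by $\mathsf{F}_1^{\delta,\pm}(E)$ and passing to the limit produces $\langle\mathfrak{m}_E,\varphi\rangle\to\varphi(0)=\langle\mathfrak{m}_1,\varphi\rangle$. This is precisely what Lemma~\ref{l:lemma-Fphi} has been designed for.

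Finally, for $E_0=0$ with $E\to 0^+$, formula~\eqref{mu-E=0-1} combined with symmetry shows that each of $[-x_+(E),-x_-(E)]$ and $[x_-(E),x_+(E)]$ carries mass $1/2$ of $\mathfrak{m}_E$, and both intervals shrink respectively to $\{-\sqrt{2}\}$ and $\{\sqrt{2}\}$ since $x_\pm(E)\to\sqrt{2}$. Writing $\varphi(x)=\varphi(\sqrt{2})+r_E(x)$ on $[x_-(E),x_+(E)]$, with $\|r_E\|_{L^\infty([x_-(E),x_+(E)])}$ controlled by the modulus of continuity of $\varphi$ at $\sqrt{2}$ on this shrinking interval, one obtains $\int_{x_-(E)}^{x_+(E)}\varphi\,d\mathfrak{m}_E\to\tfrac12\varphi(\sqrt{2})$; the symmetric contribution from the left piece then yields $\langle\mathfrak{m}_E,\varphi\rangle\to\tfrac12(\varphi(\sqrt{2})+\varphi(-\sqrt{2}))=\langle\mathfrak{m}_0,\varphi\rangle$, which concludes the proof.
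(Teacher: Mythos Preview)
Your proof is correct and follows essentially the same route as the paper's (symmetry reduction, the ratio $\mathsf{F}_\varphi/\mathsf{F}_1$, Lemma~\ref{l:lemma-Fphi} at $E_0=1$ after a $C^1$ approximation, and a shrinking-support argument at $E_0=0$ where the paper phrases the same idea as a compactness/uniqueness-of-limit argument). One harmless slip: for $E_0>1$ you invoke $V'(x_-(E_0))\neq0$, but $x_-(E_0)=0$ and $V'(0)=0$; this does not matter since $E-V(0)=E-1>0$, so the integrand is regular at the left endpoint and your change-of-variables argument goes through unchanged.
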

Note that this property is not strictly needed for our analysis  but completes the picture nicely.  Its slightly weaker Corollary~\ref{c:cor-cont-E} below is however needed.\\
\begin{proof}[Proof of Proposition~\ref{e:continuity-mE} from Lemma~\ref{l:lemma-Fphi}]~

First, for $E \in (0,1)\cup(1,+\infty)$, we notice that $C(E)= \frac{1}{\mathsf{F}_1(E)}$.
The measure $\mathfrak{m}_E$ in~\eqref{mu-E=0}--\eqref{mu-E=1-infty} is even and absolutely continuous, so that for any $\varphi \in C^0(\R)$
$$
\langle \mathfrak{m}_E , \varphi \rangle = \int_{\R^+}\varphi \ d\mathfrak{m}_E + \int_{\R^+} \check\varphi\  d\mathfrak{m}_E =  \frac{1}{2\mathsf{F}_1(E)} \left( \mathsf{F}_\varphi(E) + \mathsf{F}_{\check\varphi}(E)\right) ,
$$
where $\check\varphi(x)=\varphi(-x)$. This implies the continuity statement for $E \in (0,1)\cup(1,+\infty)$. 

Second, we prove the continuity statement at $E=1$. Given this expression, it suffices to show that $\frac{\mathsf{F}_\varphi(E)}{\mathsf{F}_1(E)} \to \varphi(0)$ as $E\to 1^\pm$.
 To this aim, we assume first that $\varphi \in C^1(\R)$. According to Lemma~\ref{l:lemma-Fphi} (taken for any fixed $\delta\in (0,1)$), we have 
$$
 \frac{\mathsf{F}_\varphi(E)}{\mathsf{F}_1(E)} =  \frac{\varphi(0)\mathsf{F}_1(E) + O(1)\|\varphi\|_{C^1(0,3)}}{\mathsf{F}_1(E)} \to \varphi(0) , 
$$
as $E\to 1^\pm$, where we have used that $\mathsf{F}_1(E)\to +\infty$ as a consequence of~\eqref{e:asympt+}-\eqref{e:asympt-}. \\ This implies that $\langle \mathfrak{m}_E , \varphi \rangle \to \varphi(0)$ for all $\varphi \in C^1(\R)$.
Finally, if $\varphi \in C^0(\R)$, there is $\varphi_n\in C^1(\R)$ such that $\| \varphi_n - \varphi\|_{L^\infty(-3,3)}\to 0$, and we write 
\begin{align*}
\left| \langle \mathfrak{m}_E , \varphi \rangle -\varphi(0) \right|
&  \leq 
\left| \langle \mathfrak{m}_E , \varphi  - \varphi_n \rangle \right| +
\left| \langle \mathfrak{m}_E , \varphi_n \rangle -\varphi_n (0) \right| 
+ |\varphi_n (0) - \varphi(0)| \\
& \leq 2 \| \varphi_n - \varphi\|_{L^\infty(-3,3)} + \left| \langle \mathfrak{m}_E , \varphi_n \rangle -\varphi_n (0) \right|  , 
\end{align*}
where we have used that $\mathfrak{m}_E$ is a probability measure.\\
 Given $\epsilon>0$, we fix $n \in \N$ such that the first term is $\leq \epsilon$ and then take $E$ close enough to $1$ so that the second term is $\leq \epsilon$, and for such $E$ we have $\left| \langle \mathfrak{m}_E , \varphi \rangle -\varphi(0) \right| \leq 2 \epsilon$. We have thus proved the continuity statement at $E=1$ for any $\varphi \in C^0(\R)$.

Third, we prove the continuity statement at $E=0^+$. To this aim, it suffices to remark that, given any sequence $E_n\to 0^+$, we may extract from the sequence $\nu_n := 2\mathds{1}_{\R^+}\mathfrak{m}_{E_n} \in \mathcal{M}_c(\R^+)$ of probability measure a converging subsequence in $\mathcal{M}_c(\R^+)$. Since $\supp \nu_n = [x_-(E_n),x_+(E_n)]$, the limit measure is a probability measure supported in $\bigcap_{N\in \N}[x_-(E_n),x_+(E_n)] = \{\sqrt{2}\}$. Therefore the unique possible limit is $\delta_{\sqrt{2}}$, and the whole sequence $\nu_n$ converges to $\delta_{\sqrt{2}}$. This implies that $2\mathds{1}_{\R^+}\mathfrak{m}_{E} \rightharpoonup \delta_{\sqrt{2}}$ as $E\to 0^+$, and, since $\mathfrak{m}_E$ is even, that $\mathfrak{m}_E \rightharpoonup \frac{1}{2}\left(\delta_{-\sqrt{2}} + \delta_{\sqrt{2}} \right)$.

This concludes the proof of the proposition. 
\end{proof}

\begin{cor}
\label{c:cor-cont-E}
With the functions $C$ defined in~\eqref{eq:def-CE} and $F$ in~\eqref{e:def-F(E)}, the function $\Phi$ defined in~\eqref{e:def-Phi} is continuous on $[0,+\infty)$.
\end{cor}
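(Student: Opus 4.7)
My plan is to unify the piecewise definition of $\Phi$ by showing that
\begin{equation*}
\Phi(E) = \int_\R (2-x^2)\, d\mathfrak{m}_E(x) \qquad \text{for every } E \geq 0,
\end{equation*}
and then deduce continuity directly from the weak-$*$ continuity of $E \mapsto \mathfrak{m}_E$ established in Proposition~\ref{e:continuity-mE}.

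First, I would verify this identity case by case. For $E \in (0,1) \cup (1,\infty)$, the function $2-x^2$ and the potential $V$ are even, so the two integrals coming from the $[-x_+,-x_-]$ and $[x_-,x_+]$ components of $\mathfrak{m}_E$ in~\eqref{mu-E=0-1}--\eqref{mu-E=1-infty} are equal, and using $C(E) = 1/\mathsf{F}_1(E)$ one obtains $\int_\R(2-x^2)\, d\mathfrak{m}_E = C(E) F(E) = \Phi(E)$. For the boundary cases, $\int(2-x^2)\, d\delta_0 = 2 = \Phi(1)$ and $\int(2-x^2)\cdot \tfrac12(\delta_{-\sqrt 2}+\delta_{\sqrt 2}) = 0 = \Phi(0)$. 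This is a short routine calculation.

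Second, to pass from weak-$*$ continuity of $\mathfrak{m}_E$ (a priori tested against $C^0_c(\R)$ functions) to continuity of $E \mapsto \int(2-x^2)\, d\mathfrak{m}_E$, I would use a localization argument. Fix $E_0 \geq 0$ and choose $R > x_+(E_0+1)$. Since $x_+$ is nondecreasing, $\supp(\mathfrak{m}_E) \subset [-R,R]$ uniformly for $E \in [0, E_0+1]$. Pick $\chi \in C^0_c(\R)$ with $\chi \equiv 1$ on $[-R,R]$; then $\chi(x)(2-x^2) \in C^0_c(\R)$ and
\begin{equation*}
\Phi(E) = \int_\R \chi(x)(2-x^2)\, d\mathfrak{m}_E(x) \qquad \text{for all } E \in [0, E_0+1].
\end{equation*}
Continuity of $\Phi$ at $E_0$ then follows from Proposition~\ref{e:continuity-mE} applied to this compactly supported test function.

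The only (minor) obstacle is the fact that $2-x^2$ itself is not compactly supported, so Proposition~\ref{e:continuity-mE} cannot be invoked on it directly. The uniform localization of the supports of the $\mathfrak{m}_E$ on compact intervals of $E$, which is an immediate consequence of the monotonicity of $x_+$, resolves this issue via the cutoff $\chi$.
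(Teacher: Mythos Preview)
Your proof is correct and follows essentially the same route as the paper's: identify $\Phi(E)=\langle \mathfrak{m}_E, 2-x^2\rangle$ for all $E\geq 0$ and then invoke Proposition~\ref{e:continuity-mE}. The one difference is that your cutoff argument is unnecessary here: Proposition~\ref{e:continuity-mE} is already stated for arbitrary $\varphi\in C^0(\R)$ (not just $C^0_c(\R)$), which is legitimate since all the $\mathfrak{m}_E$ have compact support, so the paper applies it directly to $\varphi(x)=2-x^2$.
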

\begin{proof}
This follows directly from the fact that $C(E)= \frac{1}{\mathsf{F}_1(E)}$ and $F(E)=\mathsf{F}_{2-x^2}(E)$, so that $$\Phi(E)= C(E)F(E)=  \frac{\mathsf{F}_{2-x^2}(E)}{\mathsf{F}_{1}(E)} =\langle \mathfrak{m}_E , 2-x^2 \rangle,$$ 
to which Proposition~\ref{e:continuity-mE} applies.
\end{proof}
We finally prove Lemma~\ref{l:lemma-Fphi}.
\begin{proof}[Proof of Lemma~\ref{l:lemma-Fphi}]
First, for $E>1$, we set $\sqrt{E}=1+\eps >0$ and have
$$
\mathsf{F}_\varphi(E) =\int_{0}^{\sqrt{2+2 \sqrt{E}}}\frac{\varphi(x)}{\sqrt{E-(x^2/2-1)^2}} dx
=\int_{0}^{\sqrt{4+2\eps}}\frac{\varphi(x)}{\sqrt{(1+\eps)^2-(x^2/2-1)^2}} dx .
$$
For $\delta \in (0,1)$ independent of $\eps$, we write $\mathsf{F}_\varphi(E)  = \mathsf{G}_\varphi^{\delta,+}(E) + \mathsf{F}_\varphi^{\delta,+}(E)$ with 
$$
\mathsf{G}_\varphi^{\delta,+}(E) : =  \int_{\delta}^{\sqrt{4+2\eps}} \frac{\varphi(x)}{\sqrt{(1+\eps)^2-(x^2/2-1)^2}} dx = \int_{\delta}^{\sqrt{4+2\eps}} \frac{\varphi(x)}{\sqrt{2\eps + \eps^2 -x^4/4 + x^2 }} dx  .
$$
In particular,
$$\left| \mathsf{G}_\varphi^{\delta,+}(E) \right| \leq  \|\varphi\|_{L^\infty(0,3)} \mathsf{G}_1^{\delta,+}(E)$$
where 
$$
\mathsf{G}_1^{\delta,+}(E)
\to\int_{\delta}^{2} \frac{1}{\sqrt{ x^2-x^4/4}} dx  =  \int_{\delta}^{2} \frac{2}{x\sqrt{(2+x)(2-x)}} dx <+\infty ,
$$
as $\eps \to 0^+$. As a consequence, for any $\delta \in (0,1)$, there is $C_\delta>0$ such that for all $\eps \in (0,1/2)$, all $\varphi \in C^0(\R)$, 
$$
 \left| \mathsf{G}_\varphi^{\delta,+}(E)\right| \leq C_\delta \|\varphi\|_{L^\infty(0,3)} .
 $$
 It thus remains to consider 
 \begin{align*}
\mathsf{F}_\varphi^{\delta,+}(E) & :=  \int_{0}^{\delta} \frac{\varphi(x)}{\sqrt{(1+\eps)^2-(x^2/2-1)^2}} dx =  \int_{0}^{\delta} \frac{\varphi(x)}{\sqrt{(2+\eps-x^2/2)(x^2/2+\eps)}} dx \\
&  = \sqrt{2} \int_0^{\delta/\sqrt{2\eps}}\frac{\varphi(\sqrt{2\eps}y)}{\sqrt{2+\eps-\eps y^2} \sqrt{y^2+1}} dy ,
\end{align*}
where we have set $x=\sqrt{2\eps} y$. We also notice that 
$$
y \in [0,  \delta/\sqrt{2\eps}] \quad  \implies \quad   \frac{1}{\sqrt{2+\eps}} \leq \frac{1}{\sqrt{2+\eps-\eps y^2}} \leq \frac{1}{\sqrt{2+\eps- \delta^2/2}} .
$$
As a consequence, we have 
 \begin{align*}
\left|\mathsf{F}_\varphi^{\delta,+}(E) -\varphi(0)\mathsf{F}_1^{\delta,+}(E)\right|
&  =\sqrt{2} \left|\int_0^{\delta/\sqrt{2\eps}}\frac{\varphi(\sqrt{2\eps}y)-\varphi(0)}{\sqrt{2+\eps-\eps y^2} \sqrt{y^2+1}}dy\right| \\
& \leq \sqrt{2}\sqrt{2\eps}\|\varphi'\|_{L^\infty(0,1)} \int_1^{\delta/\sqrt{2\eps}}\frac{dy}{\sqrt{2+\eps-\eps y^2} \sqrt{y^2+1}}  \leq C \delta\|\varphi'\|_{L^\infty(0,1)} ,
\end{align*}
uniformly for $\eps \in (0,1/2)$.
Finally, we have 
 \begin{align*}
\frac{\sqrt{2}}{\sqrt{2+\eps}}\int_0^{\delta/\sqrt{2\eps}}\frac{dy}{\sqrt{y^2+1}}  \leq  \mathsf{F}_1^{\delta,+}(E)  \leq \frac{\sqrt{2}}{\sqrt{2+\eps- \delta^2/2}} \int_1^{\delta/\sqrt{2\eps}}\frac{dy}{\sqrt{y^2+1}} ,
 \end{align*}
from which~\eqref{e:asympt+} follows.

We now consider the case $E<1$. Setting $\sqrt{E}=1-\eps$ with $\eps>0$, we have 
$$
\mathsf{F}_\varphi(E) =\int_{\sqrt{(2-2 \sqrt{E})}}^{\sqrt{2+2 \sqrt{E}}}\frac{\varphi(x)}{\sqrt{E-(x^2/2-1)^2}} dx = \int_{\sqrt{2\eps}}^{\sqrt{4-2\eps}} \frac{\varphi(x)}{\sqrt{(1-\eps)^2-(x^2/2-1)^2}} dx  .
$$
For $\delta \in (0,1)$ independent of $\eps$, we have $\mathsf{F}_\varphi(E)  = \mathsf{G}_\varphi^{\delta,-}(E) + \mathsf{F}_\varphi^{\delta,-}(E)$ with 
$$
\mathsf{G}_\varphi^{\delta,-}(E) : =  \int_{\delta}^{\sqrt{4-2\eps}} \frac{\varphi(x)}{\sqrt{(1-\eps)^2-(x^2/2-1)^2}} dx = \int_{\delta}^{\sqrt{4-2\eps}} \frac{\varphi(x)}{\sqrt{-2\eps + \eps^2 -x^4/4 + x^2 }} dx  .
$$
In particular,
$$\left| \mathsf{G}_\varphi^{\delta,-}(E) \right| \leq  \|\varphi\|_{L^\infty(0,3)} \mathsf{G}_1^{\delta,-}(E)$$
where 
$$
\mathsf{G}_1^{\delta,-}(E)
\to\int_{\delta}^{2} \frac{1}{\sqrt{ x^2-x^4/4}} dx  =  \int_{\delta}^{2} \frac{2}{x\sqrt{(2+x)(2-x)}} dx <+\infty ,
$$
as $\eps \to 0^+$. As a consequence, for any $\delta \in (0,1)$, there is $C_\delta>0$ such that for all $\eps \in (0,1/2)$, all $\varphi \in C^0(\R)$, 
$$
 \left| \mathsf{G}_\varphi^{\delta,-}(E)\right| \leq C_\delta \|\varphi\|_{L^\infty(0,3)} .
 $$
 It thus remains to consider 
 \begin{align*}
\mathsf{F}_\varphi^{\delta,-}(E) & :=  \int_{\sqrt{2\eps}}^{\delta} \frac{\varphi(x)}{\sqrt{(1-\eps)^2-(x^2/2-1)^2}} dx =  \int_{\sqrt{2\eps}}^{\delta} \frac{\varphi(x)}{\sqrt{(2-\eps-x^2/2)(x^2/2-\eps)}} dx \\
&  = \sqrt{2} \int_1^{\delta/\sqrt{2\eps}}\frac{\varphi(\sqrt{2\eps}y)}{\sqrt{2-\eps-\eps y^2} \sqrt{y^2-1}} dy ,
\end{align*}
where we have set $x=\sqrt{2\eps} y$. We also notice that 
$$
y \in [1,  \delta/\sqrt{2\eps}] \quad  \implies \quad \frac{1}{\sqrt{2-\eps}} \leq \frac{1}{\sqrt{2-\eps-\eps y^2}} \leq \frac{1}{\sqrt{2-\eps- \delta^2/2}}.
$$
As a consequence, we have 
 \begin{align*}
\left|\mathsf{F}_\varphi^{\delta,-}(E) -\varphi(0)\mathsf{F}_1^{\delta,-}(E)\right|
&  =\sqrt{2} \left|\int_1^{\delta/\sqrt{2\eps}}\frac{\varphi(\sqrt{2\eps}y)-\varphi(0)}{\sqrt{2-\eps-\eps y^2} \sqrt{y^2-1}}dy\right| \\
& \leq \sqrt{2}\sqrt{2\eps}\|\varphi'\|_{L^\infty(0,1)} \int_1^{\delta/\sqrt{2\eps}}\frac{dy}{\sqrt{2-\eps-\eps y^2} \sqrt{y^2-1}}  \leq C \delta\|\varphi'\|_{L^\infty(0,1)} ,
\end{align*}
uniformly for $\eps \in (0,1/2)$.
Finally, we have 
 \begin{align*}
\frac{\sqrt{2}}{\sqrt{2-\eps}}\int_1^{\delta/\sqrt{2\eps}}\frac{dy}{\sqrt{y^2-1}}  \leq  \mathsf{F}_1^{\delta,-}(E)  \leq \frac{\sqrt{2}}{\sqrt{2-\eps- \delta^2/2}} \int_1^{\delta/\sqrt{2\eps}}\frac{dy}{\sqrt{y^2-1}} ,
 \end{align*}
from which~\eqref{e:asympt-} follows.
\end{proof}

\subsection{Analysis of $F(E)$}
To prove Theorem~\ref{th1.4}, we now study properties of the function $F$ defined in~\eqref{e:def-F(E)}. We first rewrite this quantity in a more customary form.
\begin{lemma}
\label{l:F-theta}
We have 
  \begin{align*}
 F (E)&= -2^{1/2} E^{1/4} \theta_+(E^{-1/2}) = -2^{1/2} \eta^{-1/2} \theta_+(\eta)   , \quad \text{for } \eta = E^{-1/2} \in (0,1), \\
 F(E)&= -2^{1/2} E^{1/4} \theta_-(E^{-1/2}) = -2^{1/2} \eta^{-1/2} \theta_-(\eta)  , \quad \text{for } \eta = E^{-1/2} \in (1,+\infty) ,
 \end{align*}
 with
 \begin{align}\label{e:def-theta+}
\theta_+(\eta) &: =\int_{-\eta}^{1} \tau  (1- \tau ^2) ^{-1/2} (  \tau  +\eta)^{-1/2}\, d\tau , \quad \text{for } \eta = E^{-1/2} \in (0,1), \\
\label{e:def-theta-}
\theta_-(\eta)&:=   \int_{-1}^{1} \tau  (1- \tau ^2) ^{-1/2} (  \tau  +\eta)^{-1/2}\, d\tau , \quad \text{for } \eta = E^{-1/2} \in (1,+\infty) .
\end{align}
\end{lemma}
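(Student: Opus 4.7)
The plan is to perform two successive changes of variables in the integral defining $F(E)$ and directly recognize the result as $\theta_\pm(\eta)$ up to explicit prefactors.

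First, starting from
\[
F(E) = \int_{x_-(E)}^{x_+(E)} \frac{2-x^2}{\sqrt{E-V(x)}}\, dx, \qquad V(x) = (x^2/2-1)^2,
\]
I would substitute $\tau = x^2/2-1$, so that $d\tau = x\, dx$ and $dx = d\tau/\sqrt{2(\tau+1)}$. The numerator becomes $2-x^2 = -2\tau$, the denominator becomes $\sqrt{E-\tau^2}$, and the endpoints transform as follows: $x=x_+(E)=\sqrt{2+2\sqrt{E}}$ gives $\tau = \sqrt{E}$, $x=0$ gives $\tau=-1$, and $x=x_-(E)=\sqrt{2-2\sqrt{E}}$ (valid only for $E\leq 1$) gives $\tau=-\sqrt{E}$. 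This yields
\[
F(E) = -\sqrt{2}\int_{\tau_-(E)}^{\sqrt{E}} \frac{\tau}{\sqrt{E-\tau^2}\,\sqrt{\tau+1}}\, d\tau,
\]
where $\tau_-(E) = -1$ if $E>1$ and $\tau_-(E) = -\sqrt{E}$ if $E<1$.

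Next, I would perform the rescaling $\tau = \sqrt{E}\,s$, so $d\tau = \sqrt{E}\,ds$, $\sqrt{E-\tau^2} = \sqrt{E}\sqrt{1-s^2}$, and
\[
\sqrt{\tau+1} = \sqrt{\sqrt{E}\bigl(s + E^{-1/2}\bigr)} = E^{1/4}\sqrt{s+\eta}, \qquad \eta := E^{-1/2}.
\]
A short cancellation of $\sqrt{E}$ factors then gives
\[
F(E) = -\sqrt{2}\,E^{1/4}\int_{s_-}^{1} \frac{s}{\sqrt{1-s^2}\,\sqrt{s+\eta}}\, ds,
\]
with $s_- = -\eta$ in the case $E>1$ (i.e.\ $\eta \in (0,1)$) and $s_- = -1$ in the case $E<1$ (i.e.\ $\eta>1$). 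Comparing with the definitions \eqref{e:def-theta+}--\eqref{e:def-theta-} of $\theta_+$ and $\theta_-$ and using $E^{1/4} = \eta^{-1/2}$ immediately gives the two claimed identities $F(E) = -\sqrt{2}\,\eta^{-1/2}\theta_\pm(\eta)$.

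There is no conceptual obstacle here: the argument is a routine change of variables. The only care needed is in correctly tracking how the classical turning points $x_\pm(E)$ map under $\tau = x^2/2-1$ in the two regimes $E<1$ and $E>1$ (which is exactly what distinguishes $\theta_-$ from $\theta_+$), and in verifying the sign coming from $2-x^2 = -2\tau$ and the normalization $\sqrt{2}$ arising from $dx = d\tau/\sqrt{2(\tau+1)}$.
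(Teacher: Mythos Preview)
Your proof is correct and follows essentially the same approach as the paper: the paper performs the change of variable $\sigma = x^2/2-1$ followed by the rescaling $\sigma = E^{1/2}\tau$, which is exactly your pair of substitutions up to renaming of the intermediate variables. Your tracking of the endpoints and constants matches the paper's computation.
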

This reduces the study of $F$ to that of $\theta_+$ and $\theta_-$.
\begin{proof}
After a change of variable $\sigma = x^2/2 -1$ (that is $x=\sqrt{2(\sigma +1)}$, $dx= ({2(\sigma +1)})^{-1/2}\, d\sigma$) we obtain from~\eqref{e:def-F(E)} that 
$$
F(E)= -2  \int_{(1-E^{1/2})_+-1}^{E^{1/2}} \sigma  (E- \sigma^2) ^{-1/2} ({2(\sigma +1)})^{-1/2}\, d\sigma\,.
$$
The change of variable $\sigma= E^{1/2} \tau$ then leads to 
\begin{align*}
F(E)= -2  E^{1/2} \int_{ (E^{-1/2} -1)_+ -E^{-1/2} }^{1} \tau  (1- \tau ^2) ^{-1/2} ({2(E^{1/2} \tau  +1)})^{-1/2}\, d\tau ,
\end{align*}
that is to say
 \begin{align*} 
 F (E)&= -2^{1/2}  E^{1/2} \int_{-E^{-1/2} }^{1} \tau  (1- \tau ^2) ^{-1/2} (E^{1/2} \tau  +1)^{-1/2}\, d\tau , \quad \text{for } E \in (1,+\infty) , \\
 F(E)&= -2^{1/2} E^{1/2} \int_{-1}^{1} \tau  (1- \tau ^2) ^{-1/2} (E^{1/2} \tau  +1)^{-1/2}\, d\tau , \quad \text{for } E \in (0,1) . 
 \end{align*}
 Setting $\eta = E^{-1/2}$, this rewrites 
  \begin{align*}  
 F (E)&= -2^{1/2} \eta^{-1/2} \int_{-\eta}^{1} \tau  (1- \tau ^2) ^{-1/2} (  \tau  +\eta)^{-1/2}\, d\tau , \quad \text{for } \eta = E^{-1/2} \in (0,1), \\
 F(E)&= -2^{1/2} \eta^{-1/2}  \int_{-1}^{1} \tau  (1- \tau ^2) ^{-1/2} (  \tau  +\eta)^{-1/2}\, d\tau , \quad \text{for } \eta = E^{-1/2} \in (1,+\infty) , 
 \end{align*}
 which is the sought result.
 \end{proof}
 
\begin{figure}[h]
\begin{center}
\includegraphics[width=15pc, height=15pc]{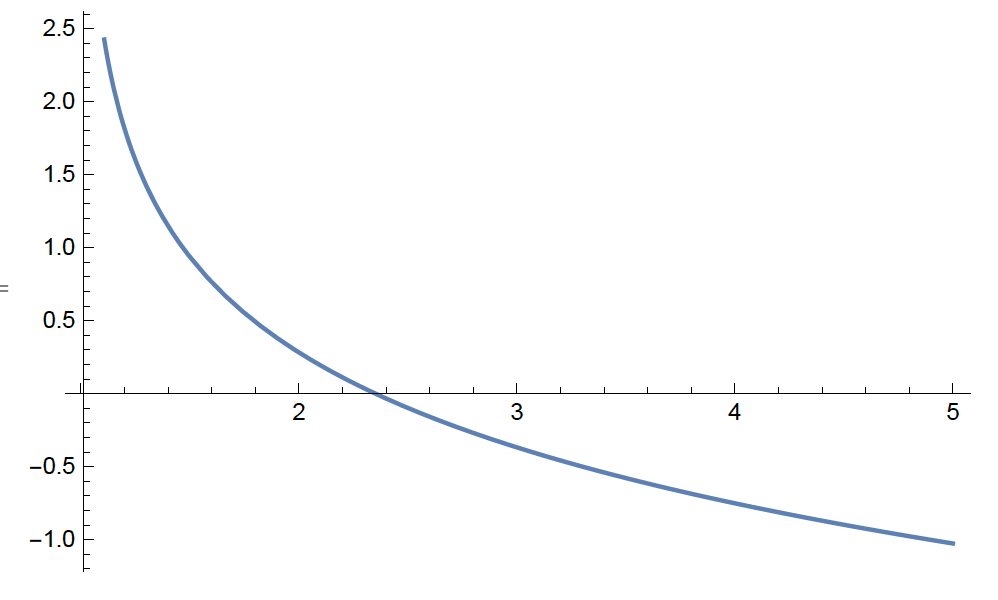}
\end{center}
\caption{Graph of $E\mapsto F(E)$ computed with Matematica for $E\in (1.01,5)$. The change of sign occurs for $E_c\approx 2.35$.} \label{f:0}
\end{figure}
We may now analyze properties of the function $F$ in~\eqref{e:def-F(E)}. The goal of this section is to prove the existence and uniqueness of $E_c$ where $F$ vanishes, which is part of the statement of Theorem~\ref{th1.4}.
\begin{prop}\label{prop4.1}
The following statements hold:
\begin{enumerate}
\item \label{e:F-increasing} The function $F$ is of class $C^1$ and we have $F'<0$ on $(1,+\infty)$.
\item \label{i:limitFinfty} $\lim_{E\ar +\infty} F(E) = -\infty$.
\item  \label{i:limitF1} $\lim_{E\ar 1^+} F(E) =+ \infty$ and $\lim_{E\ar 1^-} F(E) =+ \infty$.
 \item \label{i:F-uniq-0} $F$ admits on $(1,\infty)$ a unique zero $E_c \in (1,\infty)$ and $F'(E_c) < 0$. Numerically $E_c \approx  2.35$.
 \item \label{i:F>0} $F(E)>0$ for all $E \in (0,1)$.
 \end{enumerate}
 \end{prop}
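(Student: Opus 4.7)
The plan is to treat the five items in turn. Items~(\ref{i:limitFinfty}), (\ref{i:limitF1}), (\ref{i:F>0}) will follow rapidly from the $\theta_\pm$ representations in Lemma~\ref{l:F-theta} and the local analysis near $E=1$ furnished by Lemma~\ref{l:lemma-Fphi}, and item~(\ref{i:F-uniq-0}) is then a consequence of (\ref{e:F-increasing})--(\ref{i:limitF1}) by the intermediate value theorem. All the real work goes into item~(\ref{e:F-increasing}).

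For item~(\ref{e:F-increasing}), my strategy is to reduce the analysis of $F$ to that of the classical action
\begin{equation*}
G(E):=\int_0^{x_+(E)}\sqrt{E-V(x)}\,dx, \qquad E>1,
\end{equation*}
(using $x_-(E)=0$) via the identity $F(E)=4E\,G'(E)-3\,G(E)$. To prove this identity I would integrate by parts with the test function $g(x)=x$ --- whose boundary terms vanish since $x_-=0$ and $V(x_+)=E$ --- and then use the polynomial form $V=x^4/4-x^2+1$ together with $\int V(E-V)^{-1/2}dx=2EG'-G$ to eliminate the moments $\int x^k(E-V)^{-1/2}dx$ for $k=2,4$. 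To then compute $F'$ cleanly I would pass to a fixed integration domain via the successive substitutions $x^2-2=2\sqrt{E}\,u$ and $u=-\eta+(1+\eta)\sin^2\phi$ with $\eta=E^{-1/2}\in(0,1)$, producing
\begin{equation*}
G'(E) = \frac{E^{-1/4}}{\sqrt{2}}\, J(\eta), \qquad J(\eta):=\int_0^{\pi/2}\frac{d\phi}{\sqrt{1-\eta+(1+\eta)\sin^2\phi}}.
\end{equation*}
Since the quantity under the square root is bounded below by $1-\eta>0$, $J$ is smooth on $(0,1)$, hence $F\in C^\infty((1,+\infty))$. A direct computation (using $d\eta/dE=-\eta^3/2$) then yields the clean identity
\begin{equation*}
F'(E) = G'(E) + 4E\,G''(E) = -\sqrt{2}\,E^{-3/4}\,J'(\eta),
\end{equation*}
and differentiating $J$ under the integral sign gives the manifestly positive expression $J'(\eta)=\tfrac12\int_0^{\pi/2}\cos^2\phi\,[1-\eta+(1+\eta)\sin^2\phi]^{-3/2}d\phi>0$, so $F'(E)<0$. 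The main obstacle, to my mind, is engineering these two cancellations simultaneously: the algebraic reduction $F=4EG'-3G$ on one hand, and the clean sign of $J'$ after the $\phi$-substitution on the other.

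The remaining items go through quickly. For~(\ref{i:limitFinfty}), as $\eta\to 0^+$ I would split $\theta_+(\eta)=\int_{-\eta}^0+\int_0^1$: a crude bound gives $O(\eta^{3/2})$ for the first piece, and dominated convergence (majorant $\tau^{1/2}(1-\tau^2)^{-1/2}$, using $(\tau+\eta)^{-1/2}\leq\tau^{-1/2}$ for $\tau>0$) gives a positive limit $c_0:=\int_0^1\tau^{1/2}(1-\tau^2)^{-1/2}d\tau$ in the second, so $F(E)\sim-\sqrt{2}c_0\,E^{1/4}\to-\infty$. Item~(\ref{i:limitF1}) is Lemma~\ref{l:lemma-Fphi} applied to $\varphi(x)=2-x^2$ (so $\varphi(0)=2\neq 0$) combined with the logarithmic blow-up~\eqref{e:to+-infty}. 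For~(\ref{i:F>0}), the odd/even decomposition
\begin{equation*}
\theta_-(\eta)=\int_0^1 \tau(1-\tau^2)^{-1/2}\Big[(\tau+\eta)^{-1/2}-(\eta-\tau)^{-1/2}\Big]d\tau
\end{equation*}
has a strictly negative integrand when $\eta>1$ (since $0<\eta-\tau<\tau+\eta$), hence $F(E)=-\sqrt{2}\eta^{-1/2}\theta_-(\eta)>0$ on $(0,1)$. Finally~(\ref{i:F-uniq-0}) is immediate: strict monotonicity combined with $F(1^+)=+\infty$ and $F(+\infty)=-\infty$ yields a unique zero $E_c\in(1,+\infty)$ by the intermediate value theorem, $F'(E_c)<0$ is already part of~(\ref{e:F-increasing}), and $E_c\approx 2.35$ is read off Figure~\ref{f:0}.
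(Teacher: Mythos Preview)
Your proof is correct. Items~(\ref{i:limitFinfty}), (\ref{i:limitF1}), (\ref{i:F-uniq-0}), (\ref{i:F>0}) follow essentially the paper's own arguments (the $\theta_\pm$ representations of Lemma~\ref{l:F-theta}, the odd/even split of $\theta_-$, and the invocation of Lemma~\ref{l:lemma-Fphi} with $\varphi(x)=2-x^2$ near $E=1$). The genuine difference lies in item~(\ref{e:F-increasing}). The paper splits $\theta_+=\theta_1+\theta_2$ with $\theta_1=\int_0^1$ and $\theta_2=\int_{-\eta}^0$; direct differentiation gives $\theta_1'<0$, and for $\theta_2$ an integration by parts yields $\theta_2(\eta)=-2\int_{-\eta}^0(1-\tau^2)^{-3/2}(\tau+\eta)^{1/2}\,d\tau$, whence $\theta_2'<0$; from $\theta_+'<0$ and $F=-\sqrt{2}\,E^{1/4}\theta_+(E^{-1/2})$ the paper then concludes $F'<0$. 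Your route instead passes through the classical action $G(E)=\int_0^{x_+}\sqrt{E-V}$ via the identity $F=4EG'-3G$ and two substitutions to a fixed interval, producing the closed formula $F'(E)=-\sqrt{2}\,E^{-3/4}J'(\eta)$ with $J'(\eta)=\tfrac12\int_0^{\pi/2}\cos^2\phi\,[1-\eta+(1+\eta)\sin^2\phi]^{-3/2}\,d\phi>0$. Your endgame is cleaner: the sign of $F'$ is read off a manifestly positive integrand, whereas the paper's final step from ``$\theta_+$ decreasing'' to ``$F'<0$'' tacitly requires $\frac{d}{d\eta}[\eta^{-1/2}\theta_+(\eta)]<0$, which is not automatic from $\theta_+'<0$ alone in the region where $\theta_+<0$ (the prefactor $\eta^{-1/2}$ also varies). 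Conversely, the paper's approach is more elementary and stays within the $\theta_\pm$ framework already set up, avoiding the action integral and the elliptic-type substitution.
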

Note that in Item~\ref{i:limitF1}, the divergence is logarithmic according to~\eqref{e:to+-infty} (taken for $F(E)=\mathsf{F}_{2-x^2}(E)$ where $\varphi(x)=2-x^2$).

 \begin{proof}
 We use Lemma~\ref{l:F-theta} to reduce the analysis of the variation of $F$ on $(1,\infty)$ to that of $\theta_+$ defined in~\eqref{e:def-theta+}, on $(0,1)$. 
We decompose $\theta_+$ as $\theta_+ =\theta_1 +\theta_2$ with
 $$
  \theta_1 (\eta):=  \int_{0}^{1} \tau  (1- \tau ^2) ^{-1/2} ({ \tau  +\eta})^{-1/2}\, d\tau , \quad \eta \in (0,1) ,
 $$
 and 
 \begin{equation}\label{theta2}
  \theta_2 (\eta):=  \int_{-\eta }^{0} \tau  (1- \tau ^2) ^{-1/2} ({ \tau  +\eta})^{-1/2}\, d\tau, \quad \eta \in (0,1) .
 \end{equation}
 It is clear that  $\theta_1$ is of class $C^1$ and decreasing. For the analysis of  $\theta_2$, we start by an integration by part which yields
 $$
  \theta_2 (\eta) =  -2 \int_{-\eta }^{0} ( \tau  (1- \tau ^2) ^{-1/2} )'({ \tau  +\eta})^{1/2}\, d\tau = -2   \int_{-\eta }^{0}  (1- \tau ^2) ^{-3/2}  ( \tau  +\eta)^{1/2}\, d\tau \,.
 $$
In this form we see that $\theta_2$ is of class $C^1$ and that its derivative is given by
 $$
   \theta_2' (\eta) = - \int_{-\eta }^{0}  (1- \tau ^2) ^{-3/2}  ( \tau  +\eta)^{-1/2}\, d\tau .
   $$
This yields $\theta_2' (\eta) <0$ for $\eta \in (0,1)$ so that $\theta_2$ is decreasing, and thus so is $\theta_+ =\theta_1 +\theta_2$.
Recalling from Lemma~\ref{l:F-theta} that $F (E) = -2^{1/2} E^{1/4} \theta_+(E^{-1/2})$ for $E \in (1,+\infty)$ we deduce that $F'<0$ on $(1,+\infty)$, which is Item~\ref{e:F-increasing}.

Concerning Item~\ref{i:limitFinfty}, we notice that $\theta_+$ is continuous at $0^+$ with $\theta_+ (0)= \int_{0}^{1} \tau^{1/2}   (1- \tau ^2) ^{-1/2}\, d\tau >0$. As a consequence, we deduce 
that  $F (E) \sim  -2^{1/2} E^{1/4} \theta_+ (0) \to -\infty$ as $E \to + \infty$.

Item~\ref{i:limitF1} is a direct consequence of Lemma~\ref{l:lemma-Fphi} since $F(E)=\mathsf{F}_{2-x^2}(E)$ (see for instance Equation~\eqref{e:to+-infty} with $\varphi(x)=2-x^2$).

Finally, Item~\ref{i:F-uniq-0} is a direct consequence of  Items~\ref{e:F-increasing}, \ref{i:limitFinfty} and~\ref{i:limitF1} (numerically $E_c \sim  2.35$).

 To examinate the function $F$ on the interval $(0,1)$, we rather study according to Lemma~\ref{l:F-theta} the function
  $$
  \theta_- (\eta) =  \int_{-1 }^{1} \tau  (1- \tau ^2) ^{-1/2} ({ \tau  +\eta})^{-1/2}\, d\tau , \quad \text{ for } \eta \in (1,+\infty).
 $$
 Splitting the integral and changing variables, we rewrite it as 
  $$
   \theta_- (\eta)=  \int_{0 }^{1} \tau  (1- \tau ^2) ^{-1/2} \left( ({ \eta+\tau})^{-1/2}-  ({   \eta-\tau })^{-1/2} \right)\, d\tau .
  $$
Noticing that $({ \eta+\tau})^{-1/2} <  ({   \eta-\tau })^{-1/2}$, this implies that $\theta_- (\eta)<0$ for $\eta \in (1,\infty)$, and from  Lemma~\ref{l:F-theta} we deduce that $F>0$ on $(0,1)$, whence Item~\ref{i:F>0}.
 \end{proof}

 Note that so far, we have proved in Corollary~\ref{cor3.4} that 
 $$
\alpha \to + \infty , \quad  \frac{\lambda(\alpha)}{\alpha^2} \to E \in [0,+\infty) \quad \implies \quad \frac{\lambda'(\alpha)}{\alpha} \to \Phi(E)= C(E)F(E)  
=\langle \mathfrak{m}_E , 2-x^2 \rangle.
 $$
 This is a useful piece of information as long as $\Phi(E)\neq 0$. We have already seen that $\Phi(E_c)=0$.
We have seen that $\Phi(0)=0$ so that the description of the measure $\mathfrak{m}_E$ is not enough to prove that $\lambda'$ does not vanish in this regime. In order to prove Theorem~\ref{th1.4}, an additional analysis is hence required near $E=0$, which is performed in the next section.

\subsection{Analysis below the energy $1$}\label{ss4.4}
The main goal of this subsection is to prove that $\lambda'(\alpha)$ does not vanish in the regime $\frac{\lambda(\alpha)}{\alpha^2} \to 0$. We prove a slightly more precise result.
\begin{prop}
\label{p:bottom}
Assume that $\alpha \to +\infty$ and $\frac{\lambda(\alpha)}{\alpha^2} \to E \in [0,1)$.  
Then, for all $\delta>0$, there is $\alpha_0 >0$ such that for all $\alpha \geq \alpha_0$, we have
$$
\lambda'(\alpha) \geq \left( \frac{4}{3\sqrt{2+2 \sqrt{E}} +\sqrt{2}}  - \delta \right)\alpha^{-{1/2}} .
$$
\end{prop}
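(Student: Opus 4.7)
The plan is to combine the integration by parts formula \eqref{eq:ipf2} with the virial identity from \eqref{e:variation-eta} in the semiclassical rescaling of Section~\ref{s:semiclassique}, and to close the resulting self--referential inequality using the lower bound $\lambda(\alpha)\geq \lambda_1(\alpha)\sim\sqrt{2\alpha}$ given by \eqref{eq:asba}.

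First, using the algebraic identity $2(t-\sqrt{2\alpha})(t^{2}/2-\alpha)=(t-\sqrt{2\alpha})^2(t+\sqrt{2\alpha})$, the formula \eqref{eq:ipf2} becomes $\tfrac{\sqrt\alpha}{\sqrt2}\lambda'(\alpha)=\int_0^\infty(t-\sqrt{2\alpha})^2(t+\sqrt{2\alpha})u^2\,dt-(\lambda-\alpha^2)u(0)^2-u'(0)^2$. Passing to the rescaled variable $v=T_{\sqrt\alpha}u$ with $h=\alpha^{-3/2}$, $E(h)=\lambda/\alpha^2$, this turns into
\[
\tfrac{\sqrt\alpha}{\sqrt2}\lambda'(\alpha)=\alpha^{3/2}\!\int_0^\infty(s-\sqrt2)^2(s+\sqrt2)v^2\,ds+\alpha^{3/2}(1-E(h))v(0,h)^2-\alpha^{-3/2}v'(0,h)^2.
\]
Since $E<1$, the origin $s=0$ lies in the strict forbidden region $\{V>E\}$ (recall $V(0)=1$), so Proposition~\ref{local} together with Sobolev embedding gives $v(0,h),\,v'(0,h)=\mathcal O(h^\infty)$; combined with $\alpha^{3/2}=h^{-1}$ both boundary terms are $\mathcal O(h^\infty)$, and discarding the nonnegative one yields $\sqrt\alpha\,\lambda'(\alpha)\geq \sqrt2\,\alpha^{3/2}\!\int_0^\infty(s-\sqrt2)^2(s+\sqrt2)v^2\,ds+\mathcal O(h^\infty)$.

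The crucial second step rests on the pointwise identity $(s-\sqrt2)^2(s+\sqrt2)=4V(s)/(s+\sqrt2)$ for $s\geq0$. Taking $M=x_+(E)+\delta$ one has $(s+\sqrt2)^{-1}\geq(M+\sqrt2)^{-1}$ on $[0,M]$, and a further use of Proposition~\ref{local} on $[M,+\infty)\subset\{V>E\}$ (supplemented by the standard superexponential decay of eigenfunctions of $P_h$ so as to absorb the polynomial weight $V$ up to infinity) yields $\int_M^{+\infty}V v^2/(s+\sqrt2)\,ds=\mathcal O(h^\infty)$. The virial identity \eqref{e:variation-eta} rescaled reads $\alpha^2\int_\R V v^2\,ds=(\alpha\lambda'(\alpha)+\lambda(\alpha))/3$, and parity of $v^2$ halves it to $\int_0^\infty V v^2\,ds=(E(h)+\lambda'(\alpha)/\alpha)/6$. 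Substituting produces the self-referential inequality
\[
L\geq\frac{2\sqrt2\bigl(L+\lambda(\alpha)/\sqrt\alpha\bigr)}{3(x_+(E)+\sqrt2+\delta)}-\mathcal O(h^\infty),\qquad L:=\sqrt\alpha\,\lambda'(\alpha).
\]

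Finally, solving for $L$ gives $L\geq 2\sqrt2\,(\lambda(\alpha)/\sqrt\alpha)/(3x_+(E)+\sqrt2+3\delta)-\mathcal O(h^\infty)$, and inserting the crude lower bound $\lambda(\alpha)/\sqrt\alpha\geq \lambda_1(\alpha)/\sqrt\alpha\geq\sqrt2(1-o(1))$ from \eqref{eq:asba} recovers $L\geq 4(1-o(1))/(3x_+(E)+\sqrt2+3\delta)$. For any prescribed tolerance in the proposition, one then chooses $\delta$ small enough and $\alpha_0$ large enough to conclude. The only point that needs care is the tail estimate on $[M,+\infty)$ in the second step, since Proposition~\ref{local} as stated only controls $v$ in $B^k$-norms on bounded intervals of the forbidden region; the quartic growth of $V$ however ensures the standard Agmon super-exponential decay of $v$, which readily absorbs the polynomial weight. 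The remainder is algebra and bookkeeping of orders in $h=\alpha^{-3/2}$.
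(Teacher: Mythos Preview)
Your proof is correct and follows essentially the same route as the paper's: combine~\eqref{eq:ipf2} with the virial identity~\eqref{e:variation-eta} in the semiclassical variables, kill the boundary terms at $s=0$ via Proposition~\ref{local} since $E<1$, use $(s-\sqrt2)^2(s+\sqrt2)=4V(s)/(s+\sqrt2)$ together with localization near $K_E$ to produce a self-referential inequality for $\lambda'(\alpha)$, and close it with the bottom-of-the-well bound $\lambda\geq\lambda_1\sim\sqrt{2\alpha}$. Your one stated concern about the tail on $[M,+\infty)$ is in fact already covered by Proposition~\ref{local} as written (it applies to any closed interval disjoint from $K_E$, and the $B^k$-norms there include the polynomial weights needed to control $Vv^2$), so no extra Agmon argument is required.
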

In particular, there is no critical point in this regime. Note that the case $\frac{\lambda(\alpha)}{\alpha^2} \to 0$ was not covered yet, and Proposition~\ref{p:bottom} yields
$$
\alpha \to +\infty,  \quad \frac{\lambda(\alpha)}{\alpha^2} \to  0 \quad \implies \quad \lambda'(\alpha) \geq \left( \frac{1}{\sqrt{2}}  - \delta \right)\alpha^{-{1/2}}  .
$$
The vanishing rate of the derivative as $(2\alpha)^{-1/2}$ is consistent (and optimal) since it corresponds to that of the formal derivation of the low lying eigenvalue~\eqref{eq:asba}--\eqref{eq:asbah}.
Note that this asymptotic behavior is worse than in the case $\frac{\lambda(\alpha)}{\alpha^2} \to E \in (0,1)$ (in which case $\lambda'(E) \sim \Phi(E) \alpha$ as $\alpha \to +\infty$ according to Corollary~\ref{cor3.4}).

\begin{proof}
We start from Equation~\eqref{eq:ipf2}, which we reformulate in the semiclassical setting. 
Recalling that $v(\cdot,h)$ is defined in~\eqref{e:u-v}, we have $u(0,\alpha) = \alpha^{-1/4}v(0,\alpha^{-3/2})$ and  $\d_t u(0,\alpha) = \alpha^{-3/4} \d_x v(0,\alpha^{-3/2})$. Equation~\eqref{eq:ipf2} thus rewrites as 
\begin{multline*}
 2 \int_0^{+\infty} (t- \sqrt{2\alpha}) (t^2/ 2 - \alpha) \alpha^{-1/2} v(\alpha^{-1/2}t,\alpha^{-2/3})^2\, dt - \frac{\sqrt{\alpha}}{\sqrt{2}}  \lambda'(\alpha) \\
 =  (\lambda(\alpha) - \alpha^2) \alpha^{-1/2}v(0,\alpha^{-3/2})^2 +  \alpha^{-3/2} \d_x v(0,\alpha^{-3/2})^2 .
\end{multline*}
Changing variable $x=\alpha^{-1/2}t$ in the integral and recalling that $h=\alpha^{-3/2}$, we have obtained
\begin{align*}
2h^{-1}\int_0^\infty(x-\sqrt{2})(x^2/2-1) v(x,h)^2 dx  - \frac{h^{-1/3}}{\sqrt{2}} \lambda'(\alpha) =(\lambda(\alpha) - h^{-4/3}) h^{1/3}v(0,h)^2 +h \d_x v(0,h)^2 .
\end{align*}
We now recall that $v(\cdot,h)$ solves the semiclassical eigenvalue equation~\eqref{e:eigen-semi}, associated to a family of eigenvalues $E(h) :=\frac{\lambda(\alpha)}{\alpha^2} = h^{4/3} \lambda(\alpha)\to E <1$. 
Hence the point $x=0$ stands in the classically forbidden region $\R \setminus K_E$ at energy $E$.
According to Proposition~\ref{local} combined with rough Sobolev embeddings, we thus have 
$$
h \d_x v(0,h)^2 = O(h^\infty) , \quad (\lambda(\alpha) - h^{-4/3}) h^{1/3}v(0,h)^2 = O(h^\infty) .
$$
Hence, we have obtained, as $h =\alpha^{-3/2} \to 0^+$, 
\begin{align}
\lambda'(\alpha) & = \sqrt{2}h^{-2/3}\int_0^\infty(x-\sqrt{2})(x^2-2) v(x,h)^2 dx  +   O(h^\infty) \nonumber \\
& = \sqrt{2}h^{-2/3}\int_0^\infty \frac{(x+\sqrt{2})}{(x+\sqrt{2})}(x-\sqrt{2})(x^2-2) v(x,h)^2 dx  +   O(h^\infty) \nonumber \\
& = \sqrt{2}h^{-2/3}\int_0^\infty \frac{1}{(x+\sqrt{2})}(x^2-2)^2 v(x,h)^2 dx  +   O(h^\infty) \nonumber. 
\end{align}
Using again twice the localization in $K_E$ given by Proposition~\ref{local}, we obtain, for any $\eps>0$, 
\begin{align}
\label{e:estimate-1-1}
\lambda'(\alpha) & = 4 \sqrt{2} h^{-2/3}\int_{0}^{x_+(E)+\eps} \frac{1}{(x+\sqrt{2})}(x^2/2-1)^2 v(x,h)^2 dx  + O_\eps(h^\infty) \nonumber \\
& \geq  4 \sqrt{2} h^{-2/3}\frac{1}{(x_+(E)+\eps+\sqrt{2})} \int_{0}^{x_+(E)+\eps} (x^2/2-1)^2 v(x,h)^2 dx  + O_\eps(h^\infty) \nonumber \\
& \geq   \frac{4 \sqrt{2}}{(x_+(E)+\eps+\sqrt{2})} h^{-2/3}\int_0^\infty (x^2/2-1)^2 v(x,h)^2 dx  + O_\eps(h^\infty) .
\end{align}
We now rewrite \eqref{e:variation-eta} in the semiclassical setting using~\eqref{e:u-v} and $x=\alpha^{-1/2}t$ to obtain
\begin{align*}
h^{-2/3} \lambda'(\alpha)  + \lambda(\alpha) 
& = 3 \int_\R \left(\frac 12 t^{2} -\alpha \right)^2 u(\cdot,\alpha)^2 dt  . 
\end{align*}
Using the parity of $v(\cdot , h)^2$ for any eigenfunction, we deduce 
\begin{align*}
\lambda'(\alpha)  + h^{2/3} \lambda(\alpha) = 6 h^{-2/3}  \int_0^\infty (x^2/2-1)^2 v(x,h)^2 dx ,
\end{align*}
Combining this together with~\eqref{e:estimate-1-1} yields
$$
\lambda'(\alpha) \geq  \frac{2 \sqrt{2}}{3(x_+(E)+\eps+\sqrt{2})} \big( \lambda'(\alpha)  + h^{2/3} \lambda(\alpha) \big) + O_\eps(h^\infty) .
$$
We then notice that $\frac{2 \sqrt{2}}{3(x_+(E)+\eps+\sqrt{2})} <1$ since $x_+(E) \geq \sqrt{2} \geq 0$. We deduce that
\begin{align*}
\lambda'(\alpha) \geq \frac{2\sqrt{2}}{3(x_+(E)+\eps)+\sqrt{2}}   h^{2/3} \lambda(\alpha)  + O_\eps(h^\infty) =  \frac{2\sqrt{2}}{3(x_+(E)+\eps)+\sqrt{2}}  h^{-2/3}E(h) + O_\eps(h^\infty),
\end{align*}
after having recalled~\eqref{eq:corr1}. Finally, we always have $E(h)\geq E_1(h)$ where, according to the bottom of the well asymptotics~\eqref{eq:asbah} $E_1(h) \sim \sqrt{2}h$ as $h \to +\infty$. As a consequence, for all $\delta>0$, there exists $h_0>0$ such that for all $h \in (0,h_0)$, $\lambda'(\alpha) \geq \left( \frac{4}{3x_+(E)+\sqrt{2}}  - \delta \right)h^{1/3}$. This concludes the proof of the lemma using that  $h^{1/3}=\alpha^{-1/2}$ and recalling the explicit value of $x_+(E)$ in~\eqref{e:def-xpm}.
\end{proof}

    \subsection{End of the proof of Theorem~\ref{th1.4}}
Existence and uniqueness of $E_c$ were proven in Proposition~\ref{prop4.1}. 
Next,  according to Corollary~\ref{lemma3.5}, there is $E_m>0$ such that $ \alpha_{j,c} \to + \infty$ and $\lambda_j( \alpha_{j,c}) \leq E_m \alpha_{j,c}^2$ for any  $\alpha_{j,c} \in A_{j,c}$.
Up to extracting a subsequence, we may assume that $\frac{\lambda_j( \alpha_{j,c})}{\alpha_{j,c}^2} \to E \in [0,E_m]$ as $j \to + \infty$. 
We have $E \notin [0,1)$ since Proposition~\ref{p:bottom} would then imply $\lambda_j'( \alpha_{j,c})>0$. 
If $E \in (0,E_m]$, then Corollary~\ref{cor3.4} implies that $\alpha_{j,c}^{-1} \lambda_j'( \alpha_{j,c})  \to \Phi(E)$. If $E \neq E_c$, then $\Phi(E)>0$ and thus $\lambda_j'( \alpha_{j,c}) \sim \Phi(E)\alpha_{j,c}$ which contradicts $\lambda_j'( \alpha_{j,c}) =0$. As a consequence, we necessarily have $\frac{\lambda_j( \alpha_{j,c})}{\alpha_{j,c}^2} \to E_c$ (since this holds for any subsequence, this holds for the full sequence), which concludes the proof of Theorem~\ref{th1.4}.

    \section{Analysis beyond semiclassical measures and proof of Theorem~\ref{thcj}}
    \label{s5}   
    \subsection{Introduction}  
 According to the results of Section~\ref{s:semiclassique}, we have now identified the location of critical points of $\lambda_j$. In the semiclassical rescaling of the problem, the latter only arise (in the semiclassical limit) near the energy $E_c>1$. 
 The goal of the present section is to analyze the second derivative of $\lambda_j$ around these critical points, that is to say,  the semiclassical rescaling of the problem,  near the energy $E_c$.
 To this aim, we need to analyze precisely the right hand-side in (the semiclassical version of) the second variation formula~\eqref{eq:2.15a}.
 The latter however involves the derivative with respect to the semiclassical parameter $h$ of quantities like $(a v_h^j ,v_h^j)_{L^2}$ (or of the eigenfunctions themselves), where $a$ is a fixed polynomial and $v_h^j$ is an eigenfunction associated to the eigenvalue $E_j(h) \to E_c$. This piece of information is not encoded in the semiclassical measures studied in Section~\ref{s:semiclassique}.
 
 The main goal of the present section is thus essentially to give an explicit approximation $v_h^{j,\app}$ of the eigenfunction $v_h^j$ (associated to the eigenvalue $E_j(h) \to E_c$), that is differentiable with respect to $h$, and such that $\d_h v_h^{j,\app}$ is close to $\d_h v_h^{j}$ in the semiclassical limit.
   Luckily, the energy $E_c$ is a regular level set of the potential $(x^2/2-1)^2$, and we use this fact all along the proof.\\
   
In Subsection~\ref{s:class-quantities}, we therefore introduce a slightly more general setting of a 1D Schr\"odinger operator at a regular energy level. This allows to focus on the sole relevant properties of the potential $(x^2/2-1)^2$ at the energy $E_c$. We also introduce additional classical quantities which arise in the semiclassical limit.

Our analysis of eigenvalues near $E_c$ will rely on the Bohr-Sommerfeld quantization rules at this energy level, which we review in Subsection~\ref{s:bohr-sommerfeld}, with an emphasis on the labelling of the eigenvalues.

In Subsection~\ref{s:abstract-app}, we develop a rather systematic approach to compare approximate and exact eigenvalues/eigenfunctions. We assume there that approximate eigenvalues/eigenfunctions are constructed, and 
 give sufficient conditions on the approximate eigenfunctions $v_h^{j,\app}$ so that $\d_h v_h^{j,\app}$ is close to $\d_h v_h^{j}$. 

In Subsection~\ref{s:lagrangian-distrib}, we recall a classical way for constructing approximate eigenvalues/eigenfunctions (which eventually leads in particular to the Bohr-Sommerfeld quantization conditions discussed in Section~\ref{s:bohr-sommerfeld}) relying on WKB expansions or semiclassical Lagrangian Distributions.  This approach was initiated  by V.P. Maslov in a rather formal way but a complete mathematical proof can be found in the article by J. Duistermaat~\cite{Du} and in a more formalized way in the article by B. Candelpergher and J.C. Nosmas~\cite{CN}. For the analysis modulo $\mathcal O (h^\infty)$, we also refer to the arguments given in the appendix of Helffer-Robert~\cite{HMR} (itself relying on~\cite{Du} and~\cite{CN}). Here, we need to revisit this construction and control that we can differentiate with respect to the parameters.

 As mentioned to us by Didier Robert, we could also construct approximate eigenfunctions using coherent states instead of Lagrangian distributions.

\subsection{Setting and classical quantities}
\label{s:class-quantities}
Our  approach in this section involves refined semiclassical analysis at a non critical energy (namely $E_c$) for the semiclassical Schr\"odinger operator
 \begin{equation}\label{eq:5.18}
P_h = -h^2 \frac{d^2}{dx^2} +V(x) .
 \end{equation}
Throughout this section, we consider this operator and let $E_0 \in \R$ be an energy level satisfying the following assumption.
\begin{assumption}[Non-degeneracy at energy level $E_0$]
\label{a:nondeg}
The potential $V$ is $C^\infty$, real-valued and even. The classically allowed region at energy $E_0$, $K_{E_0}= V^{-1}((-\infty, E_0])$ is compact, connected and nondegenerate in the sense that $V'(x) \neq 0$ for all $x \in K_{E_0}$.
\end{assumption}
Note that the evenness of the potential is actually not needed for the results presented in this section, but it simplifies notation and proofs slightly.
Our goal is to apply the results of the present section to $V(x)=(x^2/2-1)^2$ and $E_0=E_c \approx 2.35 >1$ which satisfy Assumption~\ref{a:nondeg}. 
Note that if $(V,E_0)$ satisfy Assumption~\ref{a:nondeg}, then, there exists $\epsilon_0>0$ small such that $(V,E)$ also satisfy Assumption~\ref{a:nondeg} for all $E \in I_{E_0}(\epsilon_0)$, where we have denoted by 
\begin{align}
\label{e:I-eps}
I_{E_0}(\epsilon_0) :=(E_0-\epsilon_0, E_0+\epsilon_0)
\end{align}
such an interval of non-degenerate energies. Moreover, we have
$$
K_E = V^{-1}((-\infty, E]) = [-x_+(E), x_+(E)]\quad \text{ for all } E \in I_{E_0}(\epsilon_0) ,
$$
where $x_+(E)$ is the unique $x \in \R^+$ such that $V(x_+(E))=E$.
Note also that under Assumption~\ref{a:nondeg}, $P_h$ is essentially self-adjoint and from now on $P_h$ denotes this realization. 

\bigskip
To conclude these preliminaries, let us now introduce some classical quantities (in the sense that they arise from classical mechanics).
 For $\mu > \inf V$, we first introduce the energy
$$  
\mathcal E (\mu)= \frac{1}{2\pi} \int_{\xi^2 + V \leq \mu} dx d\xi . $$
Under Assumption~\ref{a:nondeg} and with the notation~\eqref{e:I-eps}, for all  $\mu \in I_{E_0}(\epsilon_0)$,   the energy $\mathcal E (\mu)$ is finite and rewrites as 
\begin{align}
\label{e:def-cal-E}
 \mathcal E (\mu)= \frac 1 \pi  \int_{- x_+(\mu)}^{x_+(\mu)} \sqrt{\mu-V(x)} dx\,.
 \end{align}
Still using Assumption~\ref{a:nondeg}, $\mu \mapsto  \mathcal E (\mu)$ is differentiable on $I_{E_0}(\epsilon_0)$ and satisfies 
$$
\mathcal E'(\mu)  =\frac{1}{2\pi} 
\int_{-x_+(\mu)}^{x_+(\mu)} \frac{dx}{\sqrt{\mu-V(x)}} \,.
$$
In particular, $\mathcal E$ is a smooth, strictly increasing function on $I_{E_0}(\epsilon_0)$.
Note that, in the case where  $V(x) = (x^2/2  -1)^2$, we have
\begin{equation}\label{eq:a5.1}
 \pi \mathcal E'(\mu)  = 
  C(\mu)^{-1}\,,
\end{equation}
where $C(\mu) = \left(\int_0^{x_+(\mu)}(\mu-V(x))^{-1/2}dx\right)^{-1}$ was introduced in~\eqref{eq:def-CE}.

\subsection{Bohr Sommerfeld condition and labelling of eigenvalues}
\label{s:bohr-sommerfeld}

\subsubsection{Spectrum and Bohr Sommerfeld condition}

We start with discussing eigenvalues of $P_h$ and their approximations, see e.g.~\cite{HeRo}. Eigenfunctions and their approximations are discussed later on (although these approximations are of course intimately related).

Under Assumption~\ref{a:nondeg} and with the notation~\eqref{e:I-eps}, the spectrum of $P_h$, denoted $\Sp(P_h)$ satisfies 
\begin{align}
\label{e:spectrum-Ej}
\Sp(P_h) \cap (-\infty, E_0+\epsilon_0)  &= \{E_k(h),1 \leq k \leq J(h)\} ,  \\  
\label{e:spectrum-Ej-bis}
\Sp(P_h) \cap I_{E_0}(\epsilon_0)& = \left\{E^j(h) , j \in \mathcal J (h)\right\} , 
\end{align}
where $J(h) \in \N$ and $\mathcal J(h) \subset \N$ is a finite set for all $h \in (0,h_0]$. Moreover, a Sturm-Liouville argument (as in Proposition~\ref{p:def-lambda}) shows that $E^j(h)$ has multiplicity one for all $j \in \mathcal J(h)$, and we thus assume that 
\begin{align*}
&E_k(h) < E_{k+1}(h), \quad \text{ for all }1 \leq k \leq J(h) ;\\
&E^j(h) < E^{j+1}(h), \quad \text{ for all } j \in \mathcal J (h) .
\end{align*}

Under Assumption~\ref{a:nondeg} and with the notation~\eqref{e:I-eps}, for all  $\mu \in I_{E_0}(\epsilon_1)$,  we consider
the pairs $(h,j) \in (0,h_0] \times \N$ such that the Bohr-Sommerfeld quantization condition
\begin{equation}\label{eq:quantiz1}
 \mathcal E (\mu) = (j+\frac 12) h\,.
\end{equation}
As proven for example in~\cite{HeRo} or in the appendix of \cite{HMR} (see also Theorem 6.3 and the following corollaries in \cite{CN}), there exists a family of functions $f_\ell  \in C^\infty(I_{E_0}(\epsilon_1))$ for $\ell \in \N$, such that the following holds.
There is a unique $E(j,h,\mu)\in \Sp(P_h)$ such that the following relation holds, for a sequence of $C^\infty$ functions $f_\ell$
\begin{equation}\label{eq:quantiz2}
E(j,h,\mu) = \mu + \sum_{\ell \geq 1} f_\ell (\mu) h^{\ell}+ \mathcal O (h^\infty) , \text{ uniformly for }(\mu,j,h) \text{ satisying~\eqref{eq:quantiz1}, } j \in \mathcal J(h) .
\end{equation}
Conversely any eigenvalue of $P_h$ in $I_{E_0}(\epsilon_0)$ 
can be obtained for $h$ small enough in this way. 
In other words, for $h\in (0,h_0]$ and for any $j \in \mathcal J(h)$ (that is to say, for any $E^j(h) \in \Sp(P_h) \cap I_{E_0}(\epsilon_0)$, see~\eqref{e:spectrum-Ej}), we have
\begin{equation}\label{formbis}
E^j(h) =  \mu_j(h)  + \sum_{\ell \geq 1} f_\ell (\mu_j(h)) h^{\ell} + \mathcal O(h^\infty) ,
\end{equation}
with  \begin{equation} \label{eq:quantrule1}
 \mu_j(h) = \mathcal E ^{-1} ((j+1/2) h).
 \end{equation} 
Note that, as a direct consequence of~\eqref{formbis}-\eqref{eq:quantrule1} and a Taylor expansion of $\mathcal E^{-1}$, the spectrum of $P_h$ in the energy window $I_{E_0}(\epsilon_0)$ satisfies a gap condition: there is $C,h_0>0$ such that for all $h \in (0,h_0]$ and $j \in \mathcal J(h)$,
 \begin{equation}\label{eq:5.21}
 d\left( E^j(h), \Sp(P_h)\setminus \{E^j(h)\} \right) \geq \frac 1C h.
 \end{equation}

\subsubsection{Labelling of eigenvalues}
\label{s:labelling}
So far, the set $\Sp(P_h) \cap I_{E_0}(\epsilon_0)$ is described in two different ways according to~\eqref{e:spectrum-Ej}--\eqref{e:spectrum-Ej-bis}, using respectively $E_k(h)$ and $E^j(h)$. 
Since eigenvalues are simple, there is for all $(j, h) \in \mathcal{J}(h)\times(0,1)$ a unique $\hat j = \hat j(j,h)$ such that 
 $$E^j(h) =E_{\hat j } (h) , $$ 
where $\hat j$ refers to the ``global labelling'' of the eigenvalue, that is to say $E_{\hat j } (h)$ is the $\hat j$'s eigenvalue of $P_h$. 
Note that in the case when $V$ has a unique non degenerate minimum without any other critical points below $E_0$ (which is not the case we are interested in in our application to $V(x)=(x^2/2-1)^2$), then we have $\hat j =j$, see~\cite{HeRo}. 
Without this assumption, using the asymptotic Weyl formula with remainder (see again~\cite{HeRo}) there are $C_0,h_0>0$ such that 
 \begin{equation}
 \left|\hat j(j,h)  -j \right| \leq C_0, \quad \text{for all }j \in \mathcal{J}(h) , h \in (0,h_0) ,
 \end{equation}
 that is to say, for all those $j$ such that $E^j(h) \in I_{E_0}(\epsilon_0)$.
 More recently, Y. Colin de Verdi\`ere showed in \cite[Theorem~3]{CdV} the following more precise result. 
 \begin{prop}
 \label{p:Ej=Ej}
Under Assumption~\ref{a:nondeg} and with the notation~\eqref{e:I-eps}, 
there exist $h_0>0$ and $\epsilon_1\in (0, \epsilon_0)$  such that  $\hat j  =j$ for any eigenvalue  $E^j(h)
 \in I_{E_0}(\epsilon_1)$ for all $h \in (0,h_0)$.
 \end{prop}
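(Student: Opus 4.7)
The plan has two logical steps: first I would show that the difference $j \mapsto \hat{j}(j,h) - j$ is independent of $j$ on $\mathcal{J}(h)$ (for each fixed small $h$); then I would identify the value of this constant via a semiclassical counting argument. For the first step, the key observation is that the Bohr-Sommerfeld description \eqref{formbis}--\eqref{eq:quantrule1} enumerates \emph{every} eigenvalue of $P_h$ lying in $I_{E_0}(\epsilon_0)$. Hence for consecutive indices $j,\, j+1 \in \mathcal{J}(h)$ no element of $\Sp(P_h)$ can lie strictly between $E^j(h)$ and $E^{j+1}(h)$: any such eigenvalue would automatically belong to $I_{E_0}(\epsilon_0)$ and thus already appear in the Bohr-Sommerfeld list, a contradiction. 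This forces $\hat{j}(j+1,h) = \hat{j}(j,h) + 1$, so that $j \mapsto \hat{j}(j,h) - j$ is constant on $\mathcal{J}(h)$.

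To identify this constant, I would use a sharp form of the one-dimensional semiclassical Weyl law at a non-critical energy, obtained by summing Bohr-Sommerfeld contributions (with Maslov $1/2$'s at the turning points) over each connected component of the classically allowed region $V^{-1}((-\infty,E])$. This yields, for $E \in I_{E_0}(\epsilon_1)$ with $\epsilon_1 \in (0,\epsilon_0)$ small enough,
\begin{equation*}
N(h,E) := \#\{k : E_k(h) \leq E\} = \frac{\mathcal{E}(E)}{h} + c_V + O(h),
\end{equation*}
for a constant $c_V$ determined by the subcritical geometry of $V$. Inserting $\mathcal{E}(E^j(h)) = (j+\tfrac{1}{2})h + O(h^2)$, a consequence of \eqref{eq:quantiz1}--\eqref{eq:quantrule1}, the expansion \eqref{formbis}, and the smoothness of $\mathcal{E}$ on $I_{E_0}(\epsilon_0)$, one obtains
\begin{equation*}
\hat{j}(j,h) = N(h,E^j(h)) = j + \tfrac{1}{2} + c_V + O(h).
\end{equation*}
Since the left-hand side is integer-valued and the remainder is $O(h)$, for $h$ small enough the identity holds without the error term, and the claim $\hat{j}(j,h) = j$ reduces to the identification $c_V = -\tfrac{1}{2}$.

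The main obstacle is precisely this last identification. For the potential $V(x) = (x^2/2-1)^2$ of ultimate interest here, $V^{-1}((-\infty,E])$ has two connected components below the barrier energy $E = 1$ (producing tunneling pairs of eigenvalues) and a single connected component above, reflecting a genuine change of classical topology at $E=1$ (visible also in the blow-up of $\mathcal{E}'$ there, cf.\ Subsection~\ref{ss3.5}). Tracking the half-integer Maslov contributions uniformly through this transition is the delicate step, which is the content of \cite[Theorem~3]{CdV}. An alternative route would be a continuity/deformation argument: one shows that $\hat{j}(j,h) - j$ is invariant under any smooth deformation of $V$ preserving Assumption~\ref{a:nondeg} at $E_0$ and avoiding level crossings inside $I_{E_0}(\epsilon_1)$ (this invariance uses the gap \eqref{eq:5.21} applied uniformly along the deformation), and then reduces to a model potential (e.g.\ a single harmonic well) where $\hat{j} = j$ can be checked by direct computation.
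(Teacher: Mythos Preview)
Your proposal is an honest outline rather than a complete proof: you correctly isolate the hard step (identifying the constant $c_V$, equivalently tracking the Maslov half-integers through the change of classical topology at $E=1$) and then mention, as an ``alternative route,'' a deformation argument to a model potential. That alternative is exactly what the paper does, and it is the only part of the argument that is actually carried through.

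The paper's execution of the deformation is concrete. One sets $W_t(x) = (1-t)V(x) + t\,R_0\,x^2$ with $R_0 = E_0\,x_+(E_0)^{-2}$, so that $W_t(x_+(E_0)) = E_0$ for every $t\in[0,1]$ and Assumption~\ref{a:nondeg} holds uniformly in $t$. One then rescales to $V_t(x) := W_t(a(t)x/a(0))$, where $a(t) = (2\pi)^{-1}\int_{\xi^2+W_t\le E_0}dx\,d\xi$; a direct change of variables gives $\mathcal E_t(E_0) \equiv \mathcal E(E_0)$. Bohr--Sommerfeld then applies to $P_{h,t} = -h^2\partial_x^2 + V_t$ uniformly in $t$ near the fixed energy $E_0$, so for each $j$ with $E^j(h,1)\in I_{E_0}(\epsilon_1)$ the eigenvalue $E^j(h,t)$ stays inside $I_{E_0}(\epsilon_0)$ for all $t\in[0,1]$. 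Simplicity of the spectrum makes $t\mapsto \hat j(j,h,t)-j$ a continuous integer-valued function, hence constant; at $t=1$ the operator is a harmonic oscillator, where $\hat j=j$ is explicit.

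Your first step (constancy of $\hat j(j,h)-j$ in $j$, via the fact that the Bohr--Sommerfeld list exhausts $\Sp(P_h)\cap I_{E_0}(\epsilon_0)$) is correct and tidy, but it is orthogonal to the paper's argument: the paper never uses it, and by itself it does not avoid having to compute the constant. Your Weyl-law route, as you yourself note, would require a counting formula $N(h,E)=\mathcal E(E)/h+c_V+O(h)$ with a well-defined constant term valid \emph{above} the barrier energy $E=1$; obtaining this uniformly across the double-well/single-well transition is precisely the content of \cite{CdV} and not easier than the proposition itself. The deformation argument sidesteps this entirely by never counting across the critical level.
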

In other words, the proposition says that $E^j(h)=E_j(h)$ when $E^j(h)$ is close to $E_0$.

 \begin{proof}  
 We just sketch the main lines of the proof since no detail is given in \cite{CdV}. The idea is to construct a deformation of the potential $V$ onto a harmonic potential for which the spectrum is explicit. 
 We introduce, for $t\in [0,1]$,
  $$
  W_t(x)  = (1-t) V(x) + t R_0 x^2, \quad\text{with} \quad R_0= E_0 \,  x_+(E_0)^{-2} .
  $$
  With this choice, we have $W_0=V, W_1=R_0x^2$ and $W_t(x_+(E_0))= E_0$ for all $t \in [0,1]$. Moreover, $W_t$ satisfies the same Assumption~\ref{a:nondeg}, in which the nondegeneracy is uniform with respect to $t\in [0,1]$. 
  We finally define 
    $$
  a(t) = \frac{1}{2\pi} \int_{\xi^2 + W_t(x) \leq E_0} dx d\xi  =  \frac{1}{\pi} \int_{-x_+(E_0)}^{x_+(E_0)} \sqrt{E_0- W_t(x)} dx ,
  $$
  where $x_+(E_0)$ is for all $t\in[0,1]$ the unique $x\geq 0$ such that $W_t(x_+(E_0))=E_0$, and
  $$
  V_t(x):= W_t (a(t) x/a(0) )\,.
  $$
  With this choice we have $V_0(x)=V(x)$, $V_1(x)=R_0\frac{a(1)^2}{a(0)^2}x^2$, and 
  $$
  \mathcal E_t (E_0):=  \frac{1}{2\pi} \int_{\xi^2 + V_t(x) \leq E_0} dx d\xi 
  $$
  satisfies
\begin{align}
\label{e:E-t-0}
    \mathcal E_t (E_0) & =  \frac{1}{ \pi} \int_{-\frac{a(0)}{a(t)}x_+(E_0)}^{\frac{a(0)}{a(t)}x_+(E_0)} \sqrt{E_0 - V_t(x)} dx  = \frac{1}{ \pi} \int_{-\frac{a(0)}{a(t)}x_+(E_0)}^{\frac{a(0)}{a(t)}x_+(E_0)} \sqrt{E_0 - W_t \left(\frac{a(t)}{a(0)}x\right)} dx \nonumber  \\
    & =  \frac{a(0)}{a(t)}\frac{1}{ \pi} \int_{-x_+(E_0)}^{x_+(E_0)} \sqrt{E_0 - W_t \left(y\right)} dy= a(0)= \mathcal E (E_0)\,.
\end{align}
We now consider the spectra of the family of operators 
$$
P_{h,t} := -h^2\frac{d^2}{dx^2} + V_t(x) ,  \quad t \in [0,1] ,
$$
where $P_{h,0}=P_h$ and $P_{h,1} = -h^2\frac{d^2}{dx^2} + R_0\frac{a(1)^2}{a(0)^2}x^2$. In view of~\eqref{e:spectrum-Ej}--\eqref{e:spectrum-Ej-bis}, we write 
\begin{align*}
\Sp(P_{h,t}) \cap (-\infty, E_0+\epsilon_0)  &= \{E_k(h,t),1 \leq k \leq J(h,t)\} ,  \\  
\Sp(P_{h,t}) \cap I_{E_0}(\epsilon_0)& = \left\{E^j(h,t) , j \in \mathcal J (h,t)\right\} ,
\end{align*}
and since eigenvalues are simple, we may denote by $\hat j(j,h,t)$ the unique integer such that 
 $$E^j(h,t) =E_{\hat j(j,h,t)} (h,t).$$ 
 The simplicity of the spectrum also implies that the maps $t \mapsto E^j(h,t)$ and $t \mapsto E_k(h,t)$ are all continuous $[0,1] \to \R$ for any fixed $j,k,h$. 
   We can then apply the above Bohr-Sommerfeld theory~\eqref{formbis}--\eqref{eq:quantrule1}, uniformly with respect to $t$ for the eigenvalues close to $E_0$, namely 
  $$
  E^j(h,t) = \mu_j(h,t)   + h g_1 (\mu_j(h,t),t) + \mathcal O (h^2) , \quad \text{ with } \quad \mu_j(h,t) = \mathcal E_t ^{-1} ((j+1/2) h) ,
  $$
  for all $j,h,t$ such that $\mu_j(h,t) \in  I_{E_0}(\epsilon_0)$.
Using~\eqref{e:E-t-0}, there exists therefore $\epsilon_1<\epsilon_0$ and $h_0>0$ such that, defining $\mathcal{J}^{\epsilon_1}(h) \subset \mathcal J(h,1)$ by
$$
\Sp(P_{h,1}) \cap I_{E_0}(\epsilon_1) = \left\{E^j(h,1) , j \in \mathcal{J}^{\epsilon_1}(h) \right\} ,
$$
we have
$$
  E^j(h,t) \in I_{E_0}(\epsilon_0) \quad \text{ for all }  h \in (0,h_0) , t \in [0,1] ,j \in \mathcal{J}^{\epsilon_1}(h) . 
$$
 As a consequence, for $h \in (0,h_0)$ and $j \in \mathcal{J}^{\epsilon_1}(h)$, the map 
$$
t \mapsto \hat j(j,h,t) - j , \quad  [0,1] \to  \N
$$
is continuous. Hence we have $\hat j (j,h,t) -j$ constant for $t\in [0,1]$. But for $t=1$ we are considering the harmonic oscillator $P_{h,1}$, for which we know that $\hat j (j,h,1)=j$.
This implies that $\hat j (j,h,0)=j$ for the operator $P_{h,0}=P_h$.
 \end{proof}

   \subsection{Functional analysis: comparison between approximate eigenfunctions and eigenfunctions}
   \label{s:abstract-app}
In this subsection, we compare approximate eigenvalues and eigenfunctions with exact ones, and investigate their derivatives with respect to the semiclassical parameter $h$ (having in mind the formula~\eqref{eq:2.15a} for the second variation).
  The approach here is ``abstract'' in the sense that we do not construct the approximate eigenvalues/eigenfunctions but rather assume their existence.
  A way of constructing the latter approximate eigenvalues/eigenfunctions is reviewed in Subsection~\ref{s:lagrangian-distrib} below.
  \begin{prop}
  \label{l:appeig}
 We consider the operator $P_h$ in~\eqref{eq:5.18} under Assumption~\ref{a:nondeg} and with the notation~\eqref{e:I-eps}. Assume further that, for $C_0>0,k >1,$ and $\epsilon_1\in (0,\epsilon_0)$ we have $E^{\app}(h) \in I_{E_0}(\epsilon_1)$, $v_h^{\app} \in D(P_h)$ (resp. real-valued) and $r_h \in L^2(\R)$ satisfying
 \begin{equation}
 \label{e:asspt-app}
 P_h v_h^{\app}  =E^{\app}(h) v_h^{\app} + r_h, \quad \|v_h^{\app}\|_{L^2(\R)} = 1 , \quad \|r_h\|_{L^2(\R)} \leq C_0h^k  .
 \end{equation}
Then, there exist $C, h_0>0$ and $E(h)\in \Sp(P_h) \cap I_{E_0}(\epsilon_1)$ such that 
\begin{align}
\label{e:eig-close-app}|E(h) -E^{\app}(h)| \leq C_0h^k,
\end{align} and $v_h \in D(P_h)$ (resp. real-valued) such that for all $h \in (0,h_0)$,
 \begin{equation}
 \label{e:asspt-app}
 P_h v_h   =E(h) v_h , \quad \|v_h\|_{L^2(\R)} = 1 , \quad \|v_h - v_h^{\app}\|_{L^2(\R)} \leq Ch^{k-1}  .
 \end{equation}
 \end{prop}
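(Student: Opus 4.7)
The plan is to combine the spectral theorem for self-adjoint operators (which yields the eigenvalue approximation) with the Bohr--Sommerfeld spectral gap~\eqref{eq:5.21} (which yields the eigenfunction approximation), via the standard orthogonal spectral decomposition of $v_h^{\app}$.

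First, since $P_h$ is self-adjoint and $\|(P_h - E^{\app}(h))v_h^{\app}\|_{L^2} = \|r_h\|_{L^2} \le C_0 h^k$ with $\|v_h^{\app}\|_{L^2}=1$, the spectral theorem gives $\mathrm{dist}(E^{\app}(h), \Sp(P_h)) \le C_0 h^k$. Define $E(h) \in \Sp(P_h)$ as (any) eigenvalue closest to $E^{\app}(h)$, so that $|E(h) - E^{\app}(h)| \le C_0 h^k$. Since $k > 1$ and $E^{\app}(h) \in I_{E_0}(\epsilon_1)$, we get $E(h) \in I_{E_0}(\epsilon_1)$ for $h$ small enough (shrinking $\epsilon_1$ infinitesimally if needed), which proves~\eqref{e:eig-close-app}.

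Next, let $(e_n)_{n \in \N^*}$ be an $L^2$-orthonormal basis of real-valued eigenfunctions of $P_h$ with simple eigenvalues $(E_n)$, and write $v_h^{\app} = \sum_n c_n e_n$ with $c_n = (v_h^{\app}, e_n)_{L^2}$. Testing the identity $(P_h - E^{\app}(h)) v_h^{\app} = r_h$ against $e_n$ gives $(E_n - E^{\app}(h))c_n = (r_h,e_n)_{L^2}$. I then bound $|E_n - E^{\app}(h)|$ from below for $E_n \ne E(h)$: if $E_n \in I_{E_0}(\epsilon_0)$, the Bohr--Sommerfeld gap~\eqref{eq:5.21} yields $|E_n - E(h)| \ge h/C_1$, whence $|E_n - E^{\app}(h)| \ge h/C_1 - C_0 h^k \ge h/(2C_1)$ for $h$ small; if $E_n \notin I_{E_0}(\epsilon_0)$, then $|E_n - E^{\app}(h)| \ge \epsilon_0 - \epsilon_1 > 0$. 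In both cases, $|E_n - E^{\app}(h)| \gtrsim h$. Denoting by $\Pi_h$ the orthogonal projector onto $\ker(P_h - E(h))$, Parseval then gives
\[
\|(1-\Pi_h) v_h^{\app}\|_{L^2}^2 \;=\; \sum_{E_n \ne E(h)} |c_n|^2 \;\le\; \frac{(2C_1)^2}{h^2}\sum_n |(r_h,e_n)|^2 \;\le\; \frac{(2C_1 C_0)^2}{h^2}\, h^{2k} \;=\; C_2\, h^{2(k-1)}.
\]

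Finally, I set $v_h := \Pi_h v_h^{\app}/\|\Pi_h v_h^{\app}\|$ (choosing the $+$ sign), which is well-defined for $h$ small since $\|\Pi_h v_h^{\app}\|^2 = 1 - \|(1-\Pi_h)v_h^{\app}\|^2 \ge 1/2$; it is real-valued (as $v_h^{\app}$ and the $e_n$ are), normalized, and solves $P_h v_h = E(h) v_h$. The orthogonality of $\Pi_h v_h^{\app}$ and $(1-\Pi_h)v_h^{\app}$ yields
\[
\|v_h - v_h^{\app}\|_{L^2}^2 = \bigl(1 - \|\Pi_h v_h^{\app}\|\bigr)^2 + \|(1-\Pi_h)v_h^{\app}\|_{L^2}^2,
\]
and using $0 \le 1 - \|\Pi_h v_h^{\app}\| \le 1 - \|\Pi_h v_h^{\app}\|^2 \le C_2 h^{2(k-1)}$ together with the above bound, we conclude $\|v_h - v_h^{\app}\|_{L^2} \le C h^{k-1}$. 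The only nontrivial input is the spectral gap~\eqref{eq:5.21} provided by the Bohr--Sommerfeld analysis; without it, the quotient $\|r_h\|/\mathrm{gap}$ could be far larger than $h^{k-1}$, and this is the only point where the regularity of the energy level $E_0$ genuinely intervenes.
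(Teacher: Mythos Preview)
Your proof is correct and follows essentially the same route as the paper: spectral theorem for the eigenvalue estimate, then the gap~\eqref{eq:5.21} combined with the spectral projector $\Pi_h$ to control $\|(1-\Pi_h)v_h^{\app}\|$, and finally $v_h := \Pi_h v_h^{\app}/\|\Pi_h v_h^{\app}\|$. One small technical remark: you expand $v_h^{\app}$ in an orthonormal basis of eigenfunctions of $P_h$, which tacitly assumes purely discrete spectrum, whereas Assumption~\ref{a:nondeg} only guarantees discreteness in $I_{E_0}(\epsilon_0)$; the paper handles this by working with the full spectral resolution $\int_\R \lambda\, d\mathsf P(\lambda)$ instead, but your case split (inside vs.\ outside $I_{E_0}(\epsilon_0)$) carries over verbatim to that setting.
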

  Notice that, since $P_h$ has real coefficients, we may always assume that $v_h^{\app}$ is real valued (taking otherwise its real part or imaginary part and renormalizing).
 The proof is rather classical but we recall it for the sake of completeness.
 Note also that, according to the gap condition~\eqref{eq:5.21} and the assumption $k>1$, there is at most one eigenvalue of $P_h$ satisfying~\eqref{e:eig-close-app} for $h$ small enough.
  \begin{proof}
 According to~\eqref{e:asspt-app}, we have $\| (P_h-E^{\app}(h)) v_h^{\app}\|_{L^2(\R)} = \| r_h\|_{L^2(\R)}\leq C_0h^k  \|v_h^{\app}\|_{L^2(\R)}$ and thus 
 $\| (P_h-E^{\app}(h))^{-1}\|_{L^2(\R) \to L^2(\R)} \geq (C_0h^k)^{-1}$ (where $\| (P_h-E^{\app}(h))^{-1}\|_{L^2(\R)\to L^2(\R)} \in (0,+\infty]$). Since the operator $P_h$ is selfadjoint, the spectral theorem yields that if $z \in \C \setminus \Sp(P_h)$, we have  $\| (P_h-E^{\app}(h))^{-1}\|_{L^2(\R) \to L^2(\R)} = \frac{1}{d(z,\Sp(P_h))}$. As a consequence, we have obtained that either $E^{\app}(h) \in \Sp(P_h)$ or $\frac{1}{d(E^{\app}(h),\Sp(P_h))}\geq (Ch^k)^{-1}$. In any case, we deduce that 
 $$
 d(E^{\app}(h),\Sp(P_h)) \leq C_0h^k .
 $$
 Given that $E^{\app}(h) \in I_{E_0}(\epsilon_1)$, that $C_0h^k\to0$ and that $\Sp(P_h)\cap I_{E_0}(\epsilon_0)$ consists only in eigenvalues (see~\eqref{e:spectrum-Ej} as consequence of Assumption~\ref{a:nondeg}), there exists $E(h) \in \Sp(P_h)\cap I_{E_0}(\epsilon_0)$ with~\eqref{e:eig-close-app} and $\tilde{v}_h \in D(P_h)$ satisfying $\|\tilde{v}_h\|_{L^2(\R)}=1$ such that 
  \begin{equation} 
P_h \tilde{v}_h  =E(h) \tilde{v}_h .
 \end{equation}
 We may assume that $\tilde{v}_h$ is real-valued since $P_h$ has real coefficients.
 We denote by $\Pi_{E(h)}$ the orthogonal projection in $L^2(\R)$ onto $\ker(P_h - E(h))$, which, on account to the simplicity of the spectrum may be written $\Pi_{E(h)}w = (w,\tilde{v}_h)_{L^2(\R)}\tilde{v}_h$. 
 We have $\Pi_{E(h)}v_h^{\app}$ real-valued if $\tilde{v}_h$ and $v_h^{\app}$ are, and
 $$
 (P_h -E(h))\Pi_{E(h)}v_h^{\app} =0 .
 $$
Moreover, we denote by $P_h = \int_{\R} \lambda d\mathsf{P}(\lambda)$ the spectral resolution of the operator $P_h$ and remark that the projector-valued measure $d\mathsf{P}$ is a linear combination of Dirac masses on $I_{E_0}(\epsilon_0)$ (the spectrum is discrete on $I_{E_0}(\epsilon_0)$). We thus have 
 \begin{align*}
 \left\| (1-\Pi_{E(h)}) v_h^{\app} \right\|_{L^2(\R)}^2 & = \int_{\R\setminus \{E(h)\}}  d \left(\mathsf{P}(\lambda)v_h^{\app}, v_h^{\app}\right)_{L^2(\R)} \\
 & = \int_{\R\setminus \{E(h)\}}  \frac{(\lambda - E(h))^2}{(\lambda - E(h))^2}d \left(\mathsf{P}(\lambda)v_h^{\app}, v_h^{\app}\right)_{L^2(\R)} .
 \end{align*}
 Recalling the gap property~\eqref{eq:5.21} (consequence of  Assumption~\ref{a:nondeg}), we deduce 
  \begin{align} 
  \label{e:toto-1}
 \left\| (1-\Pi_{E(h)}) v_h^{\app} \right\|_{L^2(\R)}^2 & \leq  \frac{1}{d(E(h), \Sp(P_h)\setminus E(h))^2} \int_{\R\setminus \{E(h)\}}  (\lambda - E(h))^2 d \left(\mathsf{P}(\lambda)v_h^{\app}, v_h^{\app}\right)_{L^2(\R)}\nonumber \\
 & \leq \frac{C^2}{h^2} \int_{\R}  (\lambda - E(h))^2 d \left(\mathsf{P}(\lambda)v_h^{\app}, v_h^{\app}\right)_{L^2(\R)} \nonumber  \\
 &\leq  \frac{C^2}{h^2}  \left\| (P_h-E(h)) v_h^{\app} \right\|_{L^2(\R)}^2 \nonumber  \\
 & \leq  \frac{C^2}{h^2} 2 \left\| (P_h-E^{\app}(h)) v_h^{\app} \right\|_{L^2(\R)}^2 +  \frac{C^2}{h^2} 2|E(h)-E^{\app}(h)|^2\left\|v_h^{\app} \right\|_{L^2(\R)}^2\nonumber \\
 & \leq Ch^{2k-2} , 
 \end{align}
 according to~\eqref{e:asspt-app}--\eqref{e:eig-close-app}. As a consequence, we have 
   \begin{align} 
  \label{e:toto-2}
 c_h^2 :=  \|\Pi_{E(h)} v_h^{\app}\|_{L^2(\R)}^2 = \|v_h^{\app}\|_{L^2(\R)}^2 - \|(1-\Pi_{E(h)}) v_h^{\app}\|_{L^2(\R)}^2 \geq 1 - Ch^{2k-2} .
 \end{align}
 We now set $v_h := c_h^{-1}\Pi_{E(h)} v_h^{\app}$, which is a normalized eigenfunction satisfying, according to~\eqref{e:toto-1}--\eqref{e:toto-2},
 \begin{align*}
  \|v_h - v_h^{\app}\|_{L^2(\R)}^2 & =c_h^{-2} \| \Pi_{E(h)} v_h^{\app} -c_h v_h^{\app}\|_{L^2(\R)}^2 \\
  &  \leq  2c_h^{-2} \|\Pi_{E(h)} v_h^{\app} - v_h^{\app}\|_{L^2(\R)}^2 + 2c_h^{-2}(1-c_h ) \| v_h^{\app}\|_{L^2(\R)}^2  \\
  & \leq 4 \frac{ Ch^{2k-2}}{1- Ch^{2k-2}} .
 \end{align*}
This concludes the proof of the lemma.
 \end{proof}

 We have now to prove that, under some additional assumptions, derivatives with respect to $h$ of the approximate eigenvalues/eigenfunctions are close to derivatives with respect to $h$ of the associated real eigenvalues/eigenfunctions.

 \begin{prop}
 \label{p:appeig-asspts}
  We consider the operator $P_h$ in~\eqref{eq:5.18} under Assumption~\ref{a:nondeg} and with the notation~\eqref{e:I-eps}. 
    Let $\epsilon_1 \in (0,\epsilon_0)$ and
  $$I_j :=\{h\in (0,1), \mathcal E ^{-1}( (j+1/2)h) \in I_{E_0}(\epsilon_1)\}, \quad j \in \N^*.$$
  Then, for all $j \in \N^*$, the map 
$$I_j \ni  h \mapsto E^j(h) \in \Sp(P_h)$$ 
belongs to $C^1(I_j)$ and for any solution to 
   \begin{equation} \label{eq:5.19}
P_h v_h^j  =E^j(h) v_h^j, \quad  h\in I_j
 \end{equation}
the map $h \mapsto v_h^j$ belongs to $C^1(I_j; \mathcal{S}(\R))$.

Assume further that we are given $ E^{j,\app} (h) \in C^1(I_j)$ and $v_h^{j,\app} \in C^1(I_j; \mathcal{S}(\R))$ such that 
  \begin{equation} 
  \label{e:vp-proches}
 E^{j,\app} (h)-E^j(h)= \mathcal O (h^\infty) \quad  \mbox{ for } h\in I_j ,
 \end{equation}
 \begin{equation}\label{eq:5.27}
 \| v_h^j-v^{j,\app}_h \|_{L^2(\R)}=\mathcal O (h^\infty) \quad  \mbox{ for } h\in I_j ,
 \end{equation}
 \begin{equation}\label{eq:5.20}
 P_h v_h^{j,\app}  =E^{j,\app} (h) v_h^{j,\app} + r_h^j  \quad  \mbox{ for } h\in I_j ,
 \end{equation}
\begin{equation}\label{eq:5.24}
\| v_h^j\|_{L^2}=1\,, \quad  \|v_h^{j,\app}\|_{L^2}  =1 ,
\end{equation}
 \begin{equation}\label{eq:5.25}
 \| r_h^j\|_{L^2} = \mathcal O (h^\infty) \quad \mbox{ for } h\in I_j ,
 \end{equation}
 \begin{equation}\label{eq:5.28}
 \| \partial_h r_h^j\|_{L^2} = \mathcal O (h^\infty) \quad  \mbox{ for } h\in I_j ,
 \end{equation}
 \begin{equation}\label{eq:5.33}
\text{there exists } k\in \mathbb N \text{ such that }
\| \partial_h v_h^{j,\app}\|_{L^2(\R)} =\mathcal O (h^{-k}) \quad   \mbox{ for } h\in I_j  ,
\end{equation}
   \begin{equation}
   \label{e:asspt-cpct}
   \mbox{there exists a compact set } K \subset \R \mbox{ such that } \supp v_h^{j,\app}  \subset K \quad  \mbox{ for }  h\in I_j , j \in \N^* ,
   \end{equation}
   Then, we have
     \begin{equation}
  \label{e:eignvalue-close}
    (E^{j,\app} )'(h)  - (E^j)'(h) =\mathcal O (h^\infty) \quad  \mbox{ uniformly for } h\in I_j , j \in \N^* .
  \end{equation}
   \begin{equation}
 \label{e:deriv-close}
\|  \partial_h v_h^j -\partial_h v_h^{j,\app} \|_{L^2(\R)}= \mathcal O (h^\infty)  \quad \mbox{ uniformly for } h\in I_j\,.
 \end{equation}
 \end{prop}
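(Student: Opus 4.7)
The plan is to split the proof into three steps: first establish the $C^1$ regularity of $h \mapsto (E^j(h), v_h^j)$, then prove~\eqref{e:eignvalue-close} via an adapted Feynman--Hellmann argument, and finally~\eqref{e:deriv-close} by inverting $P_h - E^j(h)$ on $\{v_h^j\}^\perp$. The regularity is standard: by~\eqref{eq:5.21} the eigenvalue $E^j(h)$ is simple and isolated from the rest of $\Sp(P_h)$ by a gap of size $\geq h/C$, and $P_h$ is a self-adjoint analytic family of type (A) in $h$, so Kato's theory yields an analytic branch of simple eigenvalues together with an analytic rank-one spectral projector; Schwartz regularity in $x$ of the resulting eigenfunction follows from elliptic regularity and Agmon decay (Proposition~\ref{local}).

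For~\eqref{e:eignvalue-close}, differentiating $\|v_h^j\|_{L^2}^2 = 1$ and pairing the $h$-derivative of~\eqref{eq:5.19} with $v_h^j$ gives the Feynman--Hellmann identity $(E^j)'(h) = ((\partial_h P_h) v_h^j, v_h^j)_{L^2}$. The same procedure applied to~\eqref{eq:5.20}--\eqref{eq:5.24}, together with the remainder bounds~\eqref{eq:5.25},~\eqref{eq:5.28} and the crude estimate~\eqref{eq:5.33}, yields
\begin{equation*}
(E^{j,\app})'(h) = ((\partial_h P_h) v_h^{j,\app}, v_h^{j,\app})_{L^2} + (r_h^j, \partial_h v_h^{j,\app})_{L^2} - (\partial_h r_h^j, v_h^{j,\app})_{L^2} = ((\partial_h P_h) v_h^{j,\app}, v_h^{j,\app})_{L^2} + \mathcal O(h^\infty).
\end{equation*}
To compare the two main matrix elements, I would exploit the algebraic identity $\partial_h P_h = -2h \partial_x^2 = \frac{2}{h}(P_h - V)$, which, combined with the two eigenvalue equations, gives
\begin{equation*}
(\partial_h P_h)(v_h^j - v_h^{j,\app}) = \frac{2}{h}\left[ (E^j - E^{j,\app}) v_h^j + (E^{j,\app} - V)(v_h^j - v_h^{j,\app}) - r_h^j \right].
\end{equation*}
Each term inside the bracket is $\mathcal O(h^\infty)$ in $L^2$: on the compact set $K$ of~\eqref{e:asspt-cpct} the potential $V$ is bounded and~\eqref{eq:5.27} applies; outside $K$, $v_h^{j,\app}$ vanishes and Proposition~\ref{local} (together with the polynomial growth of $V$, which holds in our application $V(x)=(x^2/2-1)^2$) yields $\|V v_h^j\|_{L^2(\R\setminus K)} = \mathcal O(h^\infty)$, provided $K$ is taken as a neighborhood of $K_{E_0+\epsilon_1}$. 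The $1/h$ factor is absorbed, and pairing against $v_h^j$ and $v_h^{j,\app}$ gives~\eqref{e:eignvalue-close}.

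For~\eqref{e:deriv-close}, differentiating~\eqref{eq:5.19} and~\eqref{eq:5.20} in $h$ and subtracting shows that $w_h := \partial_h v_h^j - \partial_h v_h^{j,\app}$ solves $(P_h - E^j(h)) w_h = R_h$, where $R_h$ collects the five contributions $(E^j - E^{j,\app}) \partial_h v_h^{j,\app}$, $((E^j)' - (E^{j,\app})') v_h^j$, $(E^{j,\app})' (v_h^j - v_h^{j,\app})$, $-(\partial_h P_h)(v_h^j - v_h^{j,\app})$ and $-\partial_h r_h^j$. These are respectively $\mathcal O(h^\infty)$ in $L^2$ by~\eqref{e:vp-proches} and~\eqref{eq:5.33}, by the previous step, by~\eqref{eq:5.27} together with the rough bound $(E^{j,\app})'(h) = \mathcal O(h^{-1})$ (which follows from an energy estimate giving $h^2\|(v_h^{j,\app})'\|_{L^2}^2 \leq C$), by the bracket estimate above, and by~\eqref{eq:5.28}. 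Differentiating both normalizations in~\eqref{eq:5.24} and using~\eqref{eq:5.27} together with~\eqref{eq:5.33}, one checks that $(w_h, v_h^j)_{L^2} = \mathcal O(h^\infty)$. Writing $w_h = (w_h, v_h^j)_{L^2}\, v_h^j + w_h^\perp$ with $w_h^\perp \in \{v_h^j\}^\perp$ reduces matters to solving $(P_h - E^j(h)) w_h^\perp = R_h$ on the orthogonal complement of $\ker(P_h - E^j(h))$, where the gap~\eqref{eq:5.21} bounds the inverse by $C/h$, hence $\|w_h^\perp\|_{L^2} \leq (C/h) \|R_h\|_{L^2} = \mathcal O(h^\infty)$ and finally $\|w_h\|_{L^2} = \mathcal O(h^\infty)$. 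The main obstacle throughout is the $L^2$-control of $(\partial_h P_h)(v_h^j - v_h^{j,\app})$: since $V$ is unbounded, a direct bound via $\|v_h^j - v_h^{j,\app}\|_{H^2}$ is hopeless, and the crux of the argument is the algebraic substitution $\partial_h P_h = \frac{2}{h}(P_h - V)$, which trades the second derivative for multiplication by $V$, harmless on the compact support of $v_h^{j,\app}$ and tamed on the exponential tail of $v_h^j$ by Proposition~\ref{local}.
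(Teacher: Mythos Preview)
Your proof is correct and follows essentially the same route as the paper's: Feynman--Hellmann for the eigenvalue derivative, control of $(\partial_h P_h)(v_h^j-v_h^{j,\app})$ via the equation and the compact-support/Agmon splitting, and the gap inversion on $\{v_h^j\}^\perp$ for the eigenfunction derivative. The only cosmetic difference is that you package the key estimate through the identity $\partial_h P_h=\tfrac{2}{h}(P_h-V)$ in one stroke, whereas the paper first bounds $\|\partial_x^2 v_h^j\|,\|\partial_x^2 v_h^{j,\app}\|\leq Ch^{-2}$ separately and then proves $V(v_h^j-v_h^{j,\app}),\,\partial_x^2(v_h^j-v_h^{j,\app})=\mathcal O(h^\infty)$; these are equivalent computations (and your remark that $K$ should contain a neighborhood of $K_{E_0+\epsilon_1}$ is handled in the paper by working on $K\cup\tilde K$ with $\tilde K$ such a neighborhood).
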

In the statement of the proposition, we keep the subscript $j$ to remember that, for eigenvalues $E^j(h)$ in the energy window $I_{E_0}(\epsilon_1)$, the product $jh$ belongs to a fixed interval and thus $j \to +\infty$ simultaneously as $h \to 0^+$.
We will however drop the dependence with respect to $j$ in the proof since this statement only concerns a fixed couple of eigenvalue/eigenfunction and approximate eigenvalue/eigenfunction.
  
  Note that the existence of $E^j(h),v_h^j$ such that~\eqref{e:vp-proches} and~\eqref{eq:5.27} are satisfied is actually a consequence of Lemma~\ref{l:appeig} once~\eqref{eq:5.20}--\eqref{eq:5.24}--\eqref{eq:5.25} are satisfied.
  
  Note that in our application we can choose $K$ in~\eqref{e:asspt-cpct} to be any compact neighborhood of $K_{E_0+\epsilon_0} =V^{-1}((-\infty, E_0 +\epsilon_0] )$.

Note finally that this proposition is written assuming remainder estimates in $\mathcal O (h^\infty)$. In applications such $\mathcal O (h^\infty)$ are issued from Borel summation providing differentiable remainders, and in which one can always impose differentiability with respect to parameters.
One could also write an analogue of this proposition with $\mathcal O (h^N)$ remainders, keeping track of the size $N$ but we choose not to do it for the sake of the presentation.

 \begin{proof}
First notice that eigenvalues $E(h)$ are simple from Sturm-Liouville theory and are thus $C^\infty$ with respect to $h\in I_j$. That $h \mapsto v_h$ belongs to $C^1(I_j; \mathcal{S}(\R))$ follows from the following computations.
 In the interval $I_j$, we differentiate~\eqref{eq:5.19} and~\eqref{eq:5.20} with respect to $h$ and obtain 
 \begin{equation} \label{eq:5.29}
 (-h^2 \d_x^2 + V - E(h) ) \partial_h v_h  =E'(h) v_h + 2h \d_x^2 v_h \,,
 \end{equation}
 and
  \begin{equation}\label{eq:5.30}
 (-h^2 \d_x^2 + V-E^{\app} (h)) \partial_h v_h^{\app}  = (E^{\app} )'(h)  v_h^{\app} + \partial_h r_h  + 2h \d_x^2 v_h^{\app} \,.
 \end{equation}
We  first compare $E'(h)$ and $ (E^{\app} )'(h)$ and then compare $\partial_h v_h$ and $\partial_h v_h^{\app}$.
Differentiating~\eqref{eq:5.24}, we obtain
\begin{align}
\label{e:h-orth}
 \left(v_h,\d_hv_h\right)_{L^2(\R)}= 0,\qquad \left(v_h^{\app},\d_hv_h^{\app}\right)_{L^2(\R)}=0.
\end{align}
Taking the scalar product with $v_h$ in \eqref{eq:5.29} and using~\eqref{e:h-orth}, selfadjointness of $P_h$ and~\eqref{eq:5.19}, we obtain
 \begin{equation}\label{eq:5.31}
   E'(h) = - 2h \left( v_h,\d_x^2 v_h\right)_{L^2(\R)}\,,
 \end{equation}
 and taking the scalar product with $v_h^{\app}$ in \eqref{eq:5.30}  and using~\eqref{e:h-orth} selfadjointness of $P_h$ and~\eqref{eq:5.20}, we obtain 
\begin{equation}\label{eq:5.32}
 (E^{\app} )'(h) = - 2h \left( v_h^{\app} ,\d_x^2 v_h^{\app} \right)_{L^2(\R)} - \left(\partial_h r_h , v_h^{\app} \right)_{L^2(\R)} + \left( r_h,\partial_h v_h^{\app}\right)_{L^2(\R)} \,. 
\end{equation}
 Substracting  \eqref{eq:5.31} to \eqref{eq:5.32} we deduce
\begin{align}
\label{e:Eprime-close}
 | (E^{\app} )'(h)  - E'(h)| & \leq 2h \left|  \left( v_h,\d_x^2 v_h\right)_{L^2(\R)} - \left( v_h^{\app} ,\d_x^2 v_h^{\app} \right)_{L^2(\R)} \right| \nonumber\\
 & \quad +  \left| \left(\partial_h r_h , v_h^{\app} \right)_{L^2(\R)}\right|  + \left| \left( r_h,\partial_h v_h^{\app}\right)_{L^2(\R)}\right| .
 \end{align}
 The last two terms in the right hand side are $\mathcal O (h^\infty)$ according to Assumptions~\eqref{eq:5.24},~\eqref{eq:5.25},~\eqref{eq:5.28} and~\eqref{eq:5.33}, respectively on $v_h^{\app}$, $r_h$, $\partial_h r_h$ and $\partial_h v_h^{\app}$. For the first term on the right hand-side of~\eqref{e:Eprime-close}, we write
\begin{align}
\label{e:control-cont-app}
   \left( v_h,\d_x^2 v_h\right)_{L^2(\R)} - \left( v_h^{\app} ,\d_x^2 v_h^{\app} \right)_{L^2(\R)} & =  \left( v_h-v_h^{\app} ,\d_x^2 v_h\right)_{L^2(\R)} + \left(v_h^{\app} , \d_x^2 v_h- \d_x^2 v_h^{\app}\right)_{L^2(\R)} \nonumber \\
   & = \left( v_h-v_h^{\app} ,\d_x^2 v_h + \d_x^2 v_h^{\app}\right)_{L^2(\R)} \,.
   \end{align}
   Then we will obtain again that this term is  $\mathcal O (h^\infty)$ by using~\eqref{eq:5.27}, once we have proved the following semiclassically temperate control of $\d_x^2 v_h$ and $\d_x^2 v_h^{\app}$.
   The first control can be deduced from  \eqref{eq:5.19}  that we rewrite in the form
   $$
   \d_x^2 v_h = h^{-2} V(x) v_h - h^{-2} E(h) v_h .
   $$
Letting $\tilde{K}$ be a compact neighborhood of $K_{E_0+\epsilon_0} =V^{-1}(]-\infty, E_0 +\epsilon_0] )$ and recalling that $E(h) \in I_{E_0}(\epsilon_0)$, we thus have  
     \begin{align*}
   \|\d_x^2 v_h\|_{L^2(\R)} & \leq  h^{-2} \| V v_h \|_{L^2(\R)} + h^{-2} |E(h)| \|v_h\|_{L^2(\R)} \\ 
   &  \leq C h^{-2} \| v_h \|_{L^2(\tilde{K})} + h^{-2} \| V v_h \|_{L^2(\R \setminus \tilde{K})} + C h^{-2}  \|v_h\|_{L^2(\R)} .
   \end{align*}
Using semiclassical control of $v_h$ in the classically forbidden region, see Proposition~\ref{local} together with $L^2$ normalization~\eqref{eq:5.24}, we obtain
\begin{align}
\label{eq:5.34-1}
   \|\d_x^2 v_h\|_{L^2(\R)} & \leq  C h^{-2} .
   \end{align}
   The estimate 
\begin{align}
\label{eq:5.34-2}
   \|\d_x^2 v_h^{\app}\|_{L^2(\R)} & \leq  C h^{-2} , 
   \end{align}
    follows the exact same arguments from the assumptions~\eqref{eq:5.20},~\eqref{eq:5.24}, \eqref{eq:5.25} and~\eqref{e:asspt-cpct}.
    The last two inequalities, combined to~\eqref{e:control-cont-app} and~\eqref{e:Eprime-close} implies~\eqref{e:eignvalue-close}.

 We now turn to the proof of~\eqref{e:deriv-close}. We first prove that 
   \begin{equation}\label{eq:5.37}
   V(x) (v_h-v_h^{\app}) =\mathcal{O}_{L^2(\R)} (h^\infty) \quad  \mbox{ and } \quad  \d_x^2 (v_h-v_h^{\app})  =\mathcal{O}_{L^2(\R)} (h^\infty)\,.
   \end{equation}
   Indeed, we first have (with $\tilde{K}$ a compact neighborhood of $K_{E_0+\epsilon_0}$)
   \begin{align}
   \|V (v_h-v_h^{\app}) \|_{L^2(\R)} & \leq    \|V (v_h-v_h^{\app}) \|_{L^2(K \cup \tilde{K})} + \|V (v_h-v_h^{\app}) \|_{L^2(\R \setminus (K \cup \tilde{K}))} \nonumber \\
   & \leq    C \|v_h-v_h^{\app}\|_{L^2(K \cup \tilde{K})} + \|V v_h \|_{L^2(\R \setminus (K \cup \tilde{K}))}  \nonumber \\
   & \leq    C \|v_h-v_h^{\app}\|_{L^2(\R)} + \mathcal{O}(h^\infty)  = \mathcal{O}(h^\infty) , \label{e:poly-growth}
   \end{align}
   where we have used~\eqref{e:asspt-cpct} in the second line, Proposition~\ref{local} together with~\eqref{eq:5.24} in the last inequality,  and~\eqref{eq:5.27} in the last equality.
   Then, substracting~\eqref{eq:5.20} to~\eqref{eq:5.19}, we obtain
   \begin{align*}
   -h^2\d_x^2 (v_h-v_h^{\app}) & = -V(v_h-v_h^{\app}) + E(h)v_h - E^{\app}(h)v_h^{\app} - r_h  \\
   & = -V(v_h-v_h^{\app}) + (E(h)-E^{\app}(h))v_h + E^{\app}(h)(v_h -v_h^{\app} )- r_h \\
   & = \mathcal{O}_{L^2(\R)}(h^\infty) ,
   \end{align*}
   after having used~\eqref{eq:5.27},~\eqref{e:vp-proches}, $E(h) \in I_{E_0}(\epsilon_0)$, together with the last estimate. This concludes the proof of~\eqref{eq:5.37}.
   Note also that we deduce from \eqref{eq:5.32} together with \eqref{eq:5.24}, \eqref{eq:5.25}, \eqref{eq:5.28}, \eqref{eq:5.33}, and \eqref{eq:5.34-2} the existence of $k \in \N$ such that
   \begin{equation}\label{eq:5.38}
    (E^{j,\app} )'(h)  = \mathcal O (h^{-k} )  \mbox{ uniformly for } h\in I_j\,.
   \end{equation}
   We now rewrite the difference of \eqref{eq:5.30} and \eqref{eq:5.29} under the form
    \begin{align} \label{eq:?}
 (P_h - E(h) ) (\partial_h v_h - \partial_h v_h^{\app})&  = (E(h)-E^{\app} (h)) \d_h v_h^{\app}+ 2h \d_x^2 ( v_h -v_h^{\app})    \nonumber \\
  & \quad + (E'(h)-(E^{\app})'(h) ) v_h  -(E^{\app})'(h) ( v_h^{\app} - v_h) 
   -  \partial_h r_h  \,.
 \end{align}
  Using~\eqref{e:vp-proches},~\eqref{eq:5.24},~\eqref{eq:5.27},~\eqref{eq:5.28},~\eqref{eq:5.37}, and~\eqref{eq:5.38}  this implies 
 \begin{equation}
 \label{e:approxequat}
 (P_h -E(h)) (\partial_h v_h -\partial_h v_h^{\app} ) = \mathcal{O}_{L^2(\R)} (h^\infty)\,.
 \end{equation}
Now recalling~\eqref{e:h-orth}, we deduce from \eqref{eq:5.27} and \eqref{eq:5.33} that 
 $$
\left( (\partial_h v_h -\partial_h v_h^{\app} ),  v_h\right)_{L^2(\R)} = \left(\partial_h v_h^{\app},  v_h\right)_{L^2(\R)}  =   \left(\partial_h v_h^{\app},  v_h -v_h^{\app} \right)_{L^2(\R)} = \mathcal O (h^\infty)\,.
 $$
 We now denote by $\Pi_{E(h)}$ the orthogonal projection in $L^2(\R)$ onto $\ker (P_h-E(h))$, which we may write $\Pi_{E(h)}w = (w,v_h)_{L^2(\R)} v_h$ according to the simplicity of the spectrum.
 On the one hand, the last identity implies $$\left\|\Pi_{E(h)} (\partial_h v_h -\partial_h v_h^{\app} ) \right\|_{L^2(\R)} = \mathcal O (h^\infty) .$$
 On the other hand, proceeding exactly as in~\eqref{e:toto-1} in the proof of Lemma~\ref{l:appeig}, using the approximate equation~\eqref{e:approxequat} together with the gap condition~\eqref{eq:5.21}, we obtain 
 $$\left\|(1 - \Pi_{E(h)}) (\partial_h v_h -\partial_h v_h^{\app} ) \right\|_{L^2(\R)} = \mathcal O (h^\infty) .$$
 Combining the last two identities concludes the proof of~\eqref{e:deriv-close}, and thus that of the proposition.
 \end{proof}

Recalling the expression of $\lambda_j''(\alpha)$ in the semiclassical scaling in Lemma~\ref{l:derivee-seconde-h}, given a $C^\infty$ real-valued function $a$ with polynomial growth, we are now interested in the asymptotic expansion of the quantity
 \begin{equation}
 \label{e:def-M}
 \mathfrak M_a ^j (h) := \int a(x) |v_h^j(x)|^2\,dx\,,\, \mbox{ for } h\in I_j\,,
 \end{equation}
 and its derivative with respect to $h$:
 \begin{equation}
( \mathfrak M_a ^j)' (h)= 2 \Re \int a(x)( \partial_h v_h^j)(x)\, \overline{v_h^j (x)} \,dx\,,\, \mbox{ for } h\in I_j\,,
 \end{equation}
We also define its approximate analogue \begin{equation} \label{e:def-Mapp}
 \mathfrak M_a ^{j,\app} (h) := \int a(x) |v_h^{j,\app}(x)|^2\,dx\,,\, \mbox{ for } h\in I_j\,,
 \end{equation}
 and obtain the following corollary of Proposition~\ref{l:appeig} (applied for all $k \in \N$) and Proposition~\ref{p:appeig-asspts}.
 \begin{cor}
 \label{c:mathfrak-M}
 Under the assumptions of Proposition~\ref{p:appeig-asspts}, we have
 \begin{align}\label{eq:5.45}
  \mathfrak M_a ^j (h) -  \mathfrak M_a ^{j,\app} (h) &=\mathcal O (h^{\infty}) \mbox{ and } \\ 
  ( \mathfrak M_a ^j )'(h) -  (\mathfrak M_a ^{j,\app})' (h)& =\mathcal O (h^\infty),  \mbox{ uniformly for } h\in I_j , \label{e:mathkfrak-M-prime}
  \end{align}
  for any $a \in C^\infty(\R)$ with polynomial growth.
\end{cor}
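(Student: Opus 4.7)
The plan is to reduce both estimates to Cauchy--Schwarz inequalities in which one factor is $\mathcal O(h^\infty)$ by Proposition~\ref{p:appeig-asspts} and the other is controlled by a polynomial in $h^{-1}$. Since both $v_h^j$ and $v_h^{j,\app}$ are real-valued (per Proposition~\ref{l:appeig}; the same normalization goes through here), I would write
\begin{equation*}
\mathfrak M_a^j (h) - \mathfrak M_a^{j,\app} (h) = \int a(x)\bigl(v_h^j-v_h^{j,\app}\bigr)\bigl(v_h^j+v_h^{j,\app}\bigr)\, dx,
\end{equation*}
and, for the derivative,
\begin{equation*}
(\mathfrak M_a^j)'(h) - (\mathfrak M_a^{j,\app})'(h) = 2\int a(x) (\partial_h v_h^j - \partial_h v_h^{j,\app})\, v_h^j\, dx + 2\int a(x)\, \partial_h v_h^{j,\app}\,(v_h^j - v_h^{j,\app})\, dx.
\end{equation*}

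For the first identity, by Cauchy--Schwarz the difference is bounded by $\|v_h^j-v_h^{j,\app}\|_{L^2}\cdot(\|a v_h^j\|_{L^2}+\|a v_h^{j,\app}\|_{L^2})$. The first factor is $\mathcal O(h^\infty)$ by~\eqref{eq:5.27}. For the second, $\|a v_h^{j,\app}\|_{L^2}\le (\sup_K|a|)\,\|v_h^{j,\app}\|_{L^2}\le C_a$ thanks to~\eqref{e:asspt-cpct} and~\eqref{eq:5.24}. The term $\|a v_h^j\|_{L^2}$ is handled by fixing a compact neighborhood $\tilde K$ of $K_{E_0+\epsilon_0}$ and splitting the integral as in the argument surrounding~\eqref{e:poly-growth}: on $\tilde K$ the factor $a$ is bounded, and on $\R\setminus\tilde K$ one uses Proposition~\ref{local} combined with the polynomial growth of $a$ to get an $\mathcal O(h^\infty)$ contribution. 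Hence $\|a v_h^j\|_{L^2}=\mathcal O(1)$ uniformly in $h\in I_j$, and~\eqref{eq:5.45} follows.

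For the derivative, the first integral in the decomposition is bounded by $2\|a v_h^j\|_{L^2}\cdot\|\partial_h v_h^j-\partial_h v_h^{j,\app}\|_{L^2}$, which is $\mathcal O(h^\infty)$ by~\eqref{e:deriv-close} and the bound just obtained. For the second integral, the support assumption~\eqref{e:asspt-cpct} transfers to $\partial_h v_h^{j,\app}$ (differentiating in $h$ preserves the $h$-uniform support in $K$), so $\|a\,\partial_h v_h^{j,\app}\|_{L^2}\le (\sup_K|a|)\,\|\partial_h v_h^{j,\app}\|_{L^2}=\mathcal O(h^{-k})$ by~\eqref{eq:5.33}. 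Combined with $\|v_h^j-v_h^{j,\app}\|_{L^2}=\mathcal O(h^\infty)$ from~\eqref{eq:5.27}, the second integral is also $\mathcal O(h^\infty)$, proving~\eqref{e:mathkfrak-M-prime}.

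The only genuinely non-routine point is the control of $\|a v_h^j\|_{L^2}$ when $a$ has polynomial growth, since the exact eigenfunction $v_h^j$ is \emph{not} compactly supported. This is where Proposition~\ref{local} is essential, playing the same role it played in~\eqref{e:poly-growth}; it ensures that the polynomial weight $a$ cannot spoil the $L^2$ estimate because $v_h^j$ is $\mathcal O(h^\infty)$ in any weighted $L^2$ norm outside the classically allowed region. Apart from this, everything reduces to two Cauchy--Schwarz inequalities fed by the ingredients already prepared in Proposition~\ref{p:appeig-asspts}.
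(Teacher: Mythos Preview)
Your proof is correct and follows essentially the same approach as the paper: the same algebraic decomposition of the difference of derivatives into two terms, the same Cauchy--Schwarz bounds, and the same use of Proposition~\ref{local} to control $\|a v_h^j\|_{L^2}$ despite the polynomial growth of $a$. The only cosmetic difference is that for the second integral in the derivative estimate, you place the weight $a$ on $\partial_h v_h^{j,\app}$ and invoke the compact support~\eqref{e:asspt-cpct}, whereas the paper places $a$ on $(v_h^j - v_h^{j,\app})$ and re-uses the decomposition~\eqref{e:poly-growth}; both are valid.
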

\begin{proof}
The validity of~\eqref{eq:5.45} for $a \in C^\infty(\R)$ with polynomial growth follows from the same decomposition as in~\eqref{e:poly-growth}.
Concerning~\eqref{e:mathkfrak-M-prime}, we decompose 
\begin{align*}
  ( \mathfrak M_a ^j )'(h) -  (\mathfrak M_a^{j,\app})' (h)& = 2 \Re \int a(x)( \partial_h v_h^j)(x)\, \overline{v_h^j (x)}dx - 2\Re \int a(x)( \partial_h v_h^{j,\app})(x)\, \overline{v_h^{j,\app}(x)}dx  \\
  &  = 2\Re \left( a v_h^j , \partial_h v_h^j-\partial_h v_h^{j,\app} \right)_{L^2}+  2\Re \left( \partial_h v_h^{j,\app}  , a( v_h^j- v_h^{j,\app} )\right)_{L^2}
\end{align*}
whence 
\begin{align*}
 \left| ( \mathfrak M_a ^j )'(h) -  (\mathfrak M_a^{j,\app})' (h) \right| 
 \leq  2 \left\| a v_h^j \right\|_{L^2}\left\| \partial_h v_h^j-\partial_h v_h^{j,\app}\right\|_{L^2}+2 \left\| \partial_h v_h^{j,\app} \right\|_{L^2}  \left\| a( v_h^j- v_h^{j,\app} )\right\|_{L^2}
\end{align*}
According to~\eqref{eq:5.33} and the same decomposition as in~\eqref{e:poly-growth}, we have $$\left\| \partial_h v_h^{j,\app} \right\|_{L^2}  \left\| a( v_h^j- v_h^{j,\app} )\right\|_{L^2} =\mathcal{O}(h^\infty).$$
And according to  Proposition~\ref{local} together with~\eqref{e:deriv-close}, we also have 
$$
\left\| a v_h^j \right\|_{L^2}\left\| \partial_h v_h^j-\partial_h v_h^{j,\app}\right\|_{L^2} =\mathcal{O}(h^\infty).$$
The last three statements conclude the proof of~\eqref{e:mathkfrak-M-prime} and of the corollary.
\end{proof}

  \subsection{Semiclassical Lagrangian distributions revisited}
  \label{s:lagrangian-distrib}

In this subsection, we recall a classical way of constructing approximate eigenfunctions for 1D Schr\"odinger operators at a regular energy level $E_0$, through WKB expansions or so-called semiclassical Lagrangian distributions. We briefly review such a construction and check that one can differentiate with respect to parameters (in particular $h$). We then consider product of two such objects, 
    
\subsubsection{Semiclassical Lagrangian distributions}
We only need part of the local theory of Lagrangian distributions. We refer to the first section of \cite{Du} (p. 209-232), to~\cite{CN} or~\cite{Dy} (see also~\cite[Section~25.1]{Hor-4} in the non-semiclassical homogeneous setting).
Given a smooth family of compact Lagrangian submanifolds $\mu \mapsto \Lambda_\mu \subset T^*\R^n=\R^{2n}$, with $\mu$ a parameter in a small interval $\mu \in I_{E_0}(\epsilon_0)=(E_0-\epsilon_0,E_0+\epsilon_0)$, a semiclassical Lagrangian distribution (or Lagrangian function, or Lagrangian state) associated to $\Lambda_\mu$ is a linear combination of integrals in the form (cf (1.2.1) in \cite{Du} or \cite{CN})
\begin{equation}\label{eq:intosc}
I(x,\mu,h) = (2\pi h)^{-k/2} \int_{U} e^{i  \varphi (x,\theta,\mu)/h} a(x,\theta, \mu,h) d\theta \,,
\end{equation}
where \begin{itemize}
\item  $k \in \N$, $U\subset \R^k$ is an open set,
\item  $\varphi \in C^\infty(\R^n\times U\times \R)$ is a real valued phase-function associated to the Lagrangian $\Lambda_\mu$, that is to say, such that, 
on the critical set (depending on $\mu$) $C_{\varphi, \mu} := \{(x,\theta), d_\theta \varphi(x,\theta,\mu)=0\}$, the differentials $d_x(\d_{\theta_1}\varphi) ,\cdots ,d_x(\d_{\theta_k}\varphi)$ are linearly independent, and 
$$\{(x, d_x\varphi(x,\theta, \mu)), (x, \theta)\in C_{\varphi,\mu}\} \subset \Lambda_\mu.$$
\item 
$a$ is in $C^\infty(\R^n\times U \times I_{E_0}(\epsilon_0) \times (0,h_0))$, uniformly compactly supported in $(x,\theta) \in \R^n\times U$ and admitting (as well as its derivatives) an expansion in the form
\begin{equation}
\label{e:expansion}
\d_{(x,\theta,\mu)}^\beta a(x,\theta,\mu,h)\sim \sum_{r=0}^{+\infty} a_{r,\beta}(x,\theta,\mu) h^r\,.
\end{equation}
\end{itemize}
Here, $\mu \in \R$ is an energy parameter and the dependence on $\mu$ is $C^\infty$. Note that the asymptotic expansion for derivatives of $a$ is not always explicit in most references (e.g.~\cite{Du}). Note also that, from the Borel summation process, the expansion~\eqref{e:expansion} is also differentiable with respect to $h$ and $\mu$.
The case $k=0$ in~\eqref{eq:intosc} simply corresponds to $I(x,h,\mu) = e^{i  \varphi (x,\mu)/h} a(x, \mu,h)$, whereas, in general,~\eqref{eq:intosc} has to be understood in the sense of oscillatory integrals, see~\cite{Du} or~\cite[Chapter~1]{GS:94}.
Note finally that, since $\Lambda_\mu$ is assumed to be compact here, any semiclassical Lagrangian distribution $I(x,\mu,h)$ as defined above is actually a smooth function of $(x,\mu,h) \in \R^n \times I_{\epsilon_0}(E_0)\times (0,h_0)$.
Another important feature is that, if $I(x,\mu,h)$ is a semiclassical Lagrangian distribution associated to $\Lambda_\mu$ and $a \in C^\infty(\R^n)$ (or more generally $a \in S^m(T^*\R^n)$), then $a(x)I(x,\mu,h)$ (or more generally $\Op_h(a)I(\cdot ,\mu,h)$ for any semiclassical quantization $\Op_h$, see~\cite{Zwo}) is as well a semiclassical Lagrangian distribution associated to $\Lambda_\mu$.

\subsubsection{Semiclassical Lagrangian distributions approximating eigenfunctions in dimension $1$}
In the present 1D context ($n=1$), under Assumption~\ref{a:nondeg} and with the notation~\eqref{e:I-eps}, the Lagrangian manifold is the energy level 
$$\Lambda_\mu = \{ (x,\xi) \in \R^2 , \xi^2 + V(x) =\mu\} , \quad \mu \in I_{E_0}(\epsilon_0).
$$
We denote $\pi_x : \R_x\times \R_\xi \to \R_x$ and $\pi_\xi : \R_x\times \R_\xi \to \R_\xi$ the canonical projections and notice that $\pi_x(\Lambda_\mu) = [-x_+(\mu),x_+(\mu)]$ (recall that Assumption~\ref{a:nondeg} includes that $V$ is even).
Notice that away from the two points $(\pm x_{+}(\mu), 0) \in \Lambda_\mu$, $\Lambda_\mu$ projects nicely on the $x$ variable and near these two points, $\Lambda_\mu$ projects nicely on the $\xi$ variable. 
Therefore (see e.g.~\cite[Exercise~12.3]{GS:94}), one can show that all semiclassical Lagrangian distributions/functions associated to $\Lambda_\mu$ write 
\begin{align}
& I(x,\mu,h) =  \sum_\pm a_\pm(x,\mu,h)e^{\pm \frac{i}{h}\varphi(x,\mu)} +(2\pi h)^{-1/2} \int_\R b_\pm(x,\xi,\mu,h) e^{\frac{i}{h}(x\xi \pm \psi(\xi))}d\xi \label{WKB-1} \\
& \varphi(x,\mu) = \int_0^x\sqrt{\mu-V(s)}ds , \quad \psi(\xi,\mu) = \int_0^\xi x_+(\mu-\zeta^2) d\zeta ,\label{WKB-2} 
\end{align}
with $\mu \in I_{E_0}(\epsilon_0)$, $\supp_x a_\pm \subset [-x_+(E_0-\epsilon_0), x_+(E_0-\epsilon_0)]$ and $\supp_\xi b_\pm$ being a fixed small set containing $0$. Note that, in these two sets respectively,
$$
\Lambda_\mu \cap (-x_+(E_0-\epsilon_0), x_+(E_0-\epsilon_0)) \cap \{\pm \xi >0\} =\{ (x, \pm \varphi'(x)) \} , \quad \text{ or } \quad \Lambda_\mu =\{ (\pm \psi'(\xi), \xi) \}  
$$
(where the last equality holds only locally).

\medskip
The next important feature we use is that, under the quantization relation \eqref{eq:quantrule1}, i.e. $\mu =\mu(h)=\mu_j(h)$, there exist $ E_j^{\app}(h)= \mu_j(h) + \mathcal{O}(h) \to E_0$ and amplitudes $a_\pm(x,\mu_j(h),h), b_\pm(x,\xi,\mu_j(h),h)$ satisfying~\eqref{e:expansion} such that, setting $$u_j^{\app} (x,h):=  c_h^{-1} I(x, \mu_j(h),h) , \quad c_h := \left\|  I(x, \mu_j(h),h)\right\|_{L^2(\R)} \to c_0 >0,
$$ where $I(x,\mu,h)$ is the Lagrangian function~\eqref{WKB-1} associated to these particular $a_\pm , b_\pm$, we have 
\begin{align}
\label{e:Pu=presq-0} &P_h u_j^{\app}  (x,h) = E_j^{\app}(h) u_j^{\app}  (x,h) + r(x,h) , \quad \text{with } \\
\label{e:Pu=presq-1} & \| u_j^{\app}(\cdot,h)\|_{L^2(\R)}=1, \quad  r(\cdot, h) = \mathcal{O}_{C^\infty}(h^\infty) ,\quad \d_h r(\cdot, h) = \mathcal{O}_{C^\infty}(h^\infty)  .
\end{align}
Note that this function is not real-valued, which is not an issue in the above results.

 \subsubsection{Product of semiclassical Lagrangian distributions}
 In this paragraph, we are interested in the $L^2$ inner product of two general Lagrangian distributions associated with the same Lagrangian submanifold.
 We recall that from Assumption~\ref{a:nondeg}, and Equation~\eqref{eq:a5.1}, we have $\mathcal E '(E_0) = (\pi C(E_0))^{-1}>0$.
\begin{prop} 
\label{e:product-SLD}
Assume that $I_1 (\cdot ,\mu,h) , I_2(\cdot ,\mu,h)$ are two semiclassical Lagrangian distributions associated to the same Lagrangian submanifold $\Lambda_\mu$ and define the quantity
 $$
K(h,\mu) = \left( I_1 (\cdot ,\mu,h) , I_2(\cdot ,\mu,h)\right)_{L^2(\R)} =  \int_\R I_1 (x,\mu,h) \overline{I_2(x,\mu,h)} dx\,.$$
Then, under Assumption~\ref{a:nondeg}, assuming $\mu(h) \to E_0$ and satisfies the quantization condition \eqref{eq:quantrule1}, we have
\begin{equation}
\label{e:dhK}
 h \partial_h ( K(h,\mu(h)) ) \to   k_0'(E_0)\mathcal E ( E_0)  \mathcal E '(E_0)^{-1}  , \quad \text{as } h \to 0^+ ,
\end{equation} 
where $k_0(E) :=\lim_{h \to 0}K(h,\mu(h))$ when $\mu(h)\to E$ under~\eqref{eq:quantrule1} is a smooth function of $E$ near $E=E_0$.
\end{prop}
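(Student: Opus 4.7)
The plan is to prove that $K(h,\mu)$ admits a full asymptotic expansion
\begin{equation*}
K(h,\mu) \sim \sum_{r\geq 0} h^r k_r(\mu),
\end{equation*}
with smooth coefficients $k_r\in C^\infty(I_{E_0}(\epsilon_0))$, jointly differentiable in $(h,\mu)$, and then to conclude~\eqref{e:dhK} via a short chain-rule computation. Granting such an expansion, the leading coefficient automatically coincides with the $k_0(\mu)$ of the statement. Differentiating the quantization condition~\eqref{eq:quantrule1} yields $\mathcal{E}'(\mu(h))\mu'(h) = j+\tfrac{1}{2} = \mathcal{E}(\mu(h))/h$, hence $h\mu'(h) = \mathcal{E}(\mu(h))/\mathcal{E}'(\mu(h)) \to \mathcal{E}(E_0)/\mathcal{E}'(E_0)$ as $h\to 0^+$. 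The chain rule then gives
\begin{equation*}
h\frac{d}{dh}K(h,\mu(h)) = h(\partial_h K)(h,\mu(h)) + h\mu'(h)(\partial_\mu K)(h,\mu(h)),
\end{equation*}
and the expansion forces $h(\partial_h K)(h,\mu(h)) = \mathcal{O}(h) \to 0$ while $(\partial_\mu K)(h,\mu(h)) \to k_0'(E_0)$, producing exactly~\eqref{e:dhK}.

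To establish the expansion, I would insert the canonical form~\eqref{WKB-1}--\eqref{WKB-2} of $I_1,I_2$ into $K(h,\mu)=\int I_1\overline{I_2}\,dx$ and treat the resulting pieces separately. The ``diagonal'' non-oscillatory contributions $\int a_\pm^{(1)}\overline{a_\pm^{(2)}}\,dx$ produce Taylor expansions in $h$ directly from~\eqref{e:expansion}. The cross terms between the $a_+^{(i)}$- and $a_-^{(i)}$-branches oscillate with phase $\pm 2\varphi(x,\mu)/h$, whose derivative $2\sqrt{\mu-V(x)}$ does not vanish on the support of the amplitudes (which is strictly interior to $(-x_+(\mu),x_+(\mu))$), so these terms are $\mathcal{O}(h^\infty)$ by non-stationary phase. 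The turning-point contributions involving the $b_\pm^{(i)}$-amplitudes are handled by stationary phase in $\xi$: the phase $x\xi \pm \psi(\xi,\mu)$ is stationary in $\xi$ precisely on $\Lambda_\mu$, producing an analogous expansion. Alternatively, one may invoke the abstract symbol calculus for semiclassical Lagrangian distributions, which identifies $k_0(\mu)$ with the pairing $\int_{\Lambda_\mu}\sigma(I_1)\overline{\sigma(I_2)}$ of principal symbols against the canonical half-density on $\Lambda_\mu$, and higher-order coefficients with symbolic expressions in subprincipal data.

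The main obstacle is securing the joint $(h,\mu)$-differentiability of the expansion, since only then may $\partial_h K$ and $\partial_\mu K$ be expanded by term-by-term differentiation. This differentiability follows from the Borel-summable nature of the amplitude expansion~\eqref{e:expansion}, whose remainder is jointly smooth in $(x,\theta,\mu,h)$ and whose derivatives retain the same asymptotic order, combined with the quantitative stationary-phase formula with parameters, in which the remainder is itself an integral of smooth functions of the amplitudes and phases and their derivatives. Once this regularity is in hand, the chain-rule computation above completes the proof.
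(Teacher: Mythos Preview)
Your proposal is correct and follows essentially the same route as the paper: establish a differentiable asymptotic expansion of $K$ with smooth coefficients $k_\ell(\mu)$, differentiate term by term using $\mu'(h)=\mathcal E(\mu(h))/(h\,\mathcal E'(\mu(h)))$ from the quantization rule, and read off the leading contribution $k_0'(E_0)\,\mathcal E(E_0)\,\mathcal E'(E_0)^{-1}$. The only difference is cosmetic---the paper substitutes $\mu=\mu(h)$ \emph{before} expanding (noting that certain oscillatory cross-terms then reduce to constants) and appeals to \cite{Du} and \cite{FeMa} for differentiability in the parameters, whereas you keep $(h,\mu)$ as independent variables and invoke the abstract symbol calculus for Lagrangian distributions.
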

\begin{proof}
 It is proven in \cite[p.~224]{Du}  that for $\mu=\mu(h)$, $K(h,\mu(h))$ has a complete expansion in the form (for any $L>0$) 
 \begin{equation}
 K(h,\mu(h))  =  \sum_{\ell =0}^{L} k_\ell (\mu(h)) h^\ell + r_L (h)\,,
 \end{equation} 
where $k_\ell \in C^\infty(I_{E_0}(\epsilon_1))$ and 
 \begin{align}
 \label{e:rest-L}
 r_L (h) = \mathcal O (h^{L+1}) \mbox{ and }  \partial_h r_L (h) = \mathcal O (h^{L})\,.
 \end{align}
 This follows of the inspection of the proof which is simply a stationary phase theorem with $\mu$ as a parameter, the oscillatory term disappearing when the quantization relation is satisfied.
 The possibility to differentiate with respect to the parameter $h$ or to the parameter $\mu$ when applying the stationary phase theorem is proved for example in \cite{FeMa} (See Part I, Section 1
  and particularly p. 31, lines 5-11).

 With this property, we get by a term by term differentiation
 \begin{equation*}
\partial_h ( K(h,\mu(h)) ) = \sum_{\ell =1}^{L} \ell k_\ell (\mu(h)) h^{\ell-1}  +  \sum_{\ell =0}^{L} k_\ell' (\mu(h))\mu'(h) h^{\ell} +  \partial_h r_L (h) \,.
\end{equation*}
Then, notice that the quantization relation \eqref{eq:quantrule1} implies 
 \begin{equation}\label{eq:deriv}
 \mathcal E '(\mu(h)) \mu'(h) = \mathcal E (\mu(h)) /h\,.
 \end{equation}
 Recalling that $\mathcal E '(\mu)>0$ for $\mu \in I_{E_0}(\epsilon_0)$, we deduce that 
 \begin{equation*}
\partial_h ( K(h,\mu(h)) ) = \sum_{\ell =1}^{L} \ell k_\ell (\mu(h)) h^{\ell-1}  +  \sum_{\ell =0}^{L} k_\ell' (\mu(h)) \mathcal E ( \mu(h)) \mathcal E '(\mu(h))^{-1} h^{\ell -1} +  \partial_h r_L (h) \,.
\end{equation*}
According to~\eqref{e:rest-L} and the fact that $\mu(h) \in I_{E_0}(\epsilon_0)$, ordering the terms in the two sums, this rewrites 
 \begin{equation*}
\partial_h ( K(h,\mu(h)) ) =h^{-1} k_0'(\mu(h))\mathcal E ( \mu(h)) \mathcal E '(\mu(h))^{-1}  + \mathcal{O}(1) , \quad \text{as } h \to 0^+ .
\end{equation*}
 In our application $\mu(h) \to E_0$ and we deduce~\eqref{e:dhK}.
 \end{proof}
 
 From the above results, we now deduce the following corollary.
 \begin{cor}
 \label{c:combine-tout}
Under Assumption~\ref{a:nondeg}, we assume $\mu(h) \to E_0$ and satisfies the quantization condition \eqref{eq:quantrule1}.
 For all $a \in C^\infty(\R)$ with polynomial growth, 
 \begin{align*}
 \mathfrak M_{a}^j (h) & \to \left< \mathfrak{m}_{E_0} , a \right>  , \quad \text{as } h \to 0^+ , \\
h \d_h \mathfrak M_{a}^j (h) &  \to \frac{d}{d\mu}\left(\left< \mathfrak{m}_\mu , a \right> \right) \Big|_{\mu = E_0}\mathcal E ( E_0 ) \mathcal E '(E_0)^{-1}  , \quad \text{as } h \to 0^+ ,
 \end{align*}
 where $\mathfrak M_{a}^j (h)$ is defined in~\eqref{e:def-M} and $\mathfrak{m}_\mu$ in~\eqref{mu-E=1-infty} .
 \end{cor}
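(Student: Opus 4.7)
\medskip

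\noindent\textbf{Proof plan for Corollary~\ref{c:combine-tout}.}

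My plan is to first reduce the problem from the exact quantities $\mathfrak M_a^j(h)$ to their approximate analogues $\mathfrak M_a^{j,\app}(h)$, and then compute the latter by viewing them as $L^2$ inner products of semiclassical Lagrangian distributions, to which Proposition~\ref{e:product-SLD} applies. More precisely, by Corollary~\ref{c:mathfrak-M} (whose hypotheses, namely those of Proposition~\ref{p:appeig-asspts}, are exactly the content of~\eqref{e:Pu=presq-0}--\eqref{e:Pu=presq-1}, plus the compact-support property that follows from the construction of $v_h^{j,\app}$ as a Lagrangian distribution attached to the compact set $\Lambda_{\mu_j(h)}$), we have $\mathfrak M_a^j(h) = \mathfrak M_a^{j,\app}(h) + \mathcal O(h^\infty)$ and $(\mathfrak M_a^j)'(h) = (\mathfrak M_a^{j,\app})'(h) + \mathcal O(h^\infty)$. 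Thus it suffices to prove the two limits for $\mathfrak M_a^{j,\app}(h)$.

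Next, using the normalization $\|v_h^{j,\app}\|_{L^2}=1$, write $v_h^{j,\app} = c_h^{-1} I(\cdot, \mu_j(h), h)$ where $I$ is the Lagrangian distribution of~\eqref{WKB-1}--\eqref{WKB-2}, and introduce
\[
K_a(h,\mu) := \big(aI(\cdot,\mu,h),I(\cdot,\mu,h)\big)_{L^2(\R)}, \qquad \mathfrak M_a^{j,\app}(h) = \frac{K_a(h,\mu_j(h))}{K_1(h,\mu_j(h))}.
\]
Here, the polynomial growth of $a$ is harmless because $I(\cdot,\mu,h)$ has a fixed compact $x$-support; hence $a I$ is again a semiclassical Lagrangian distribution attached to $\Lambda_\mu$. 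Applying Proposition~\ref{e:product-SLD} to $(I_1,I_2)=(aI,I)$ and to $(I_1,I_2)=(I,I)$, we obtain smooth functions $k_{a,0},k_{1,0}$ of $\mu$ near $E_0$, a limit $K_a(h,\mu_j(h)) \to k_{a,0}(E_0)$ and, crucially,
\[
h\partial_h K_a(h,\mu_j(h)) \to k_{a,0}'(E_0)\,\mathcal E(E_0)\,\mathcal E'(E_0)^{-1},
\]
and analogously for $K_1$.

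The identification of the limits is then obtained as follows. On the one hand, Proposition~\ref{l:semi-meas} together with the rapid decay of $v_h^j$ outside the classically allowed region (Proposition~\ref{local}) implies $\mathfrak M_a^j(h) \to \langle \mathfrak m_{E_0},a\rangle$, which yields the first limit of the corollary and, combined with Step 1, gives $\mathfrak M_a^{j,\app}(h)\to \langle \mathfrak m_{E_0},a\rangle$. Since this argument applies for any energy $E$ in a neighborhood of $E_0$ (via a sequence $h=\mathcal E(E)/(j+1/2)$, $j\to\infty$, for which $\mu_j(h)\equiv E$), we get the \emph{functional} identity
\[
\frac{k_{a,0}(E)}{k_{1,0}(E)} = \langle \mathfrak m_E,a\rangle
\]
for $E$ near $E_0$, where the right-hand side is smooth in $E$ by the explicit formula~\eqref{mu-E=1-infty}. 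Differentiating this identity in $E$, evaluating at $E_0$, and inserting the two asymptotics of Proposition~\ref{e:product-SLD} into the quotient rule
\[
h\partial_h \mathfrak M_a^{j,\app}(h) = \frac{h\partial_h K_a\cdot K_1 - K_a\cdot h\partial_h K_1}{K_1^2}\bigg|_{(h,\mu_j(h))},
\]
the factor $\mathcal E(E_0)\mathcal E'(E_0)^{-1}$ factors out and we obtain $h\partial_h \mathfrak M_a^{j,\app}(h) \to \tfrac{d}{d\mu}\langle \mathfrak m_\mu,a\rangle|_{\mu=E_0}\,\mathcal E(E_0)\mathcal E'(E_0)^{-1}$, which together with Step 1 is the second limit.

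The main obstacle I expect is bookkeeping: one must verify that the Lagrangian-distribution framework of Section~\ref{s:lagrangian-distrib} delivers both the $L^\infty$-type normalization $c_h\to c_0>0$ and enough differentiability in $(h,\mu)$ to legitimately apply Proposition~\ref{e:product-SLD}. Beyond that, everything reduces to the quotient-rule computation and the identity $k_{a,0}/k_{1,0} = \langle \mathfrak m_\cdot,a\rangle$, whose smoothness in $E$ is granted by the explicit expression of $\mathfrak m_E$ at the regular energy $E_0$.
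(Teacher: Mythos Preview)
Your proposal is correct and follows essentially the same route as the paper: reduce to the approximate quantities via Corollary~\ref{c:mathfrak-M}, then treat $\mathfrak M_a^{j,\app}(h)$ as an $L^2$ product of Lagrangian distributions and invoke Proposition~\ref{e:product-SLD}, identifying the leading term $k_0(\mu)$ with $\langle\mathfrak m_\mu,a\rangle$. The one cosmetic difference is that the paper applies Proposition~\ref{e:product-SLD} directly to the \emph{normalized} pair $(a\,v_h^{j,\app},v_h^{j,\app})$ and asserts $k_0(\mu)=\langle\mathfrak m_\mu,a\rangle$, whereas you work with the unnormalized $I$ and recover the same identity through the quotient $K_a/K_1$; your version is slightly more explicit about why the normalization constant $c_h$ causes no trouble, but the substance is identical.
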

 \begin{proof}
 Firstly, according to~ \eqref{e:Pu=presq-0}--\eqref{e:Pu=presq-1} above, under the quantization condition~\eqref{eq:quantrule1} $\mu(h)=\mu_j(h)$, there is an approximate eigenfunction $u_j^{\app}(\cdot,h)$ associated to the eigenvalue $E_j^{\app}(h)$, such that $u_j^{\app}(\cdot,h)$ is a semiclassical Lagrangian distribution associated to $\Lambda_{\mu(h)}$.
 
Secondly, as a consequence of Proposition~\ref{l:appeig}, the gap property~\eqref{eq:5.21} and the simplicity of the spectrum (following from the Sturm--Liouville theory), there is a unique eigenvalue $E_j(h)$ in an $\mathcal{O}(h)$ neighborhood of $E_j^{\app}(h)$, and we have $E_j(h)-E_j^{\app}(h)= \mathcal{O}(h^\infty)$. Moreover, still according to Proposition~\ref{l:appeig}, there is a (unique) normalized eigenfunction $u_j(\cdot,h)$ of $P_h$ associated to the eigenvalue $E_j(h)$ and such that $\|u_j(\cdot ,h)-u_j^{\app}(\cdot,h)\|_{L^2(\R)}=\mathcal{O}(h^\infty)$.

Thirdly, we notice that all assumptions of  Proposition~\ref{p:appeig-asspts} are satisfied by $u_j(\cdot,h),E_j(h),u_j^{\app}(\cdot,h), E_j^{\app}(h)$ (in particular as a consequence of the fact that semiclassical Lagrangian distributions are differentiable with respect to $h$ and yield a semiclassically temperate function).
Proposition~\ref{p:appeig-asspts} together with Equation~\eqref{e:mathkfrak-M-prime} in Corollary~\ref{c:mathfrak-M} then yield
 $$
   ( \mathfrak M_a ^j )'(h) -  (\mathfrak M_a ^{j,\app})' (h)  =\mathcal O (h^\infty) ,
 $$
 where these two quantities are defined in~\eqref{e:def-M}--\eqref{e:def-Mapp} respectively, with $v_h^j=u_j(\cdot,h)$ and $v_h^{j,\app}=u_j^{\app}(\cdot,h)$.
 
 Fourthly and lastly, it thus suffices to compute $(\mathfrak M_a ^{j,\app})' (h)$. To this aim, recalling that $u_h^{j,\app}$ and $a u_h^{j,\app}$ are two semiclassical Lagrangian distributions, we use Proposition~\ref{e:product-SLD} with $I_1 = a u_h^{j,\app}$ and $I_2 = u_h^{j,\app}$. Noticing that in this case, $k_0(\mu) = \left<\mathfrak{m}_\mu , a \right>$, Equation~\eqref{e:dhK} thus rewrites
 $$
h (\mathfrak M_a ^{j,\app})' (h)  = h \partial_h ( K(h,\mu_j(h)) ) \to \frac{d}{d\mu}(\left<\mathfrak{m}_\mu , a \right>)(\mu = E_0)\mathcal E ( E_0)  \mathcal E '(E_0)^{-1}  , \quad \text{as } h \to 0^+ ,
$$
which is the sought result.
 \end{proof}
 
\subsection{The second derivative: end of proof of Theorem~\ref{thcj}}

We now want to compute the asymptotics of $\lambda_j''(\alpha_{j,c})$. Using~\eqref{eq:2.15a}, $\lambda_j''$ rewrites as
\begin{align*}
\lambda_j''(\alpha) & =    (1 -  \frac32 h\, \d_h) \,  \left( \int_\R  (2-s^2)   |v_j(s,h)|^2 ds\right) , \quad h=\alpha^{-3/2} \\
& =     (1 -  \frac32 h\, \d_h)  \mathfrak M_{(2-s^2)}^j (h) ,
\end{align*}
where $\mathfrak M_{a}^j (h)$ is defined in~\eqref{e:def-M}. We now apply Corollary~\ref{c:combine-tout} with  $a(x) =2-x^2$ and $E_0 =E_c$ (which satisfies Assumption~\ref{a:nondeg}). 
We notice that, with this choice of $a$, we have 
$\left<\mathfrak{m}_\mu , a \right> = \Phi(\mu)$ where $\Phi$ is defined in~\eqref{e:def-Phi}, and satisfies $\Phi(E)=C(E)F(E)$ for $E$ near $E_c>1$.
As a consequence of  the fact that $\Phi(E_c)=\left<\mathfrak{m}_{E_c} , 2-x^2 \right> = 0$ (by definition of $E_c$), combined with Corollary~\ref{c:combine-tout}, we obtain at a critical point $\alpha_{j,c}$
\begin{align}
\label{e:tototo}
\lambda_j''(\alpha_{j,c}) & \to   -  \frac32 \Phi'(E_c) \mathcal E( E_c) \mathcal E'(E_c)^{-1} , \quad \text{ as } j \to +\infty .
\end{align}
Next, we notice that, by definition of $E_c$, we have $ \Phi'(E_c) =  C'(E_c)F(E_c) +  C(E_c)F'(E_c) = C(E_c)F'(E_c)$.
And according to Item~\ref{e:F-increasing} in Proposition~\ref{prop4.1}, we have $F'(E_c)<0$.
Using Assumption~\ref{a:nondeg}, and Equation~\eqref{eq:a5.1}, we have $\mathcal E '(E_0) = (\pi C(E_0))^{-1}>0$, so that~\eqref{e:tototo} rewrites 
\begin{align*}
\lambda_j''(\alpha_{j,c}) & \to   -  \frac{3\pi}2 F'(E_c) C(E_c)^2\mathcal E( E_c) >0 , \quad \text{ as } j \to +\infty .
\end{align*}
The expression in~\eqref{e:deriv-infty-1}--\eqref{e:deriv-infty-2} then follows from the definitions of $C$ and $\mathcal{E}$ in~\eqref{eq:def-CE} and ~\eqref{e:def-cal-E} respectively.

Note finally that this property also implies the uniqueness of the critical point of $\lambda_j$ for $j$ large enough (otherwise $\lambda_j$ would have a local maximum), and concludes the proof of Theorem~\ref{thcj}.

    \section{Analysis of the first two eigenvalues}
    \label{s:first-eig}
In this section, we give a complete mathematical proof that $\lambda_1$ has a unique critical point (improving \cite{He2008} which was always considering the minimum) and a numerically assisted proof that the same property holds for $\lambda_2$. Unfortunately we do not see 
 how to adapt the strategy to more excited eigenvalues.
\subsection{Strategy of the proof}
 In the case $j=1$, the proof is a combination of Lemma \ref{lemodd}  and  Proposition~\ref{lem:abound}. In the case  $j=2$, we cannot use Proposition~\ref{lem:abound} and we need some numerical help.
Of course it is less efficient that in the analysis of the De Gennes
 model\footnote{The De Gennes model corresponds to the family
 of Harmonic oscillators $D_t^2 + (t+\alpha)^2$ on the half line with
 Neumann condition with groundstate energy $\mu(\alpha)$. It has been
 shown in \cite{DauHel}  that $\lambda_1$ has  a
 unique minimum and that this minimum is non degenerate.} where we have equality in Proposition \ref{lem:abound}  but this will be enough.
 In the case $j=1$, we will start by using Proposition \ref{lem:abound} to deduce an upper bound for $\alpha_c$ and then we will use Lemma \ref{lemodd} to conclude.
The proof will be a consequence of a good upper bound for
$\lambda_1(\alpha)$
 and a good lower bound for $\lambda_3(\alpha)$.

\subsection{A preliminary estimate}
The following lemma has been proved in~\cite{HK2008a}.
\begin{lemma}\label{lemodd}
If $\alpha_c$ is a critical point of $\lambda_1$ (resp. $\lambda_2$) and if
\begin{equation}\label{hyp37}
3 \lambda_3(\alpha_c) > 7 \lambda_1 (\alpha_c) \quad \text{(resp. $3 \lambda_4(\alpha_c) > 7 \lambda_2 (\alpha_c)$)} ,
\end{equation}
then 
\begin{equation}\label{ndg}
 \lambda_1''(\alpha_c) >0 \quad \text{(resp. $\lambda_2''(\alpha_c) >0$)}.
\end{equation}
\end{lemma}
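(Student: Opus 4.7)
The plan is to use the second variation formula \eqref{eq:2.15} together with the decomposition of $\partial_\alpha u_j$ in the eigenbasis, and to exploit the identity \eqref{eq:newid2} valid at a critical point. First, I would start from \eqref{eq:2.15}, which at a critical point $\alpha_c$ reads
\begin{equation*}
\lambda_j''(\alpha_c) = 2 - 4 \left( (\tfrac12 t^2 - \alpha_c) u_j(\cdot, \alpha_c) ,\, \partial_\alpha u_j(\cdot, \alpha_c) \right)_{L^2(\R)}\,.
\end{equation*}
Using \eqref{e:Qlambda} at the critical point (where $\lambda_j'(\alpha_c)=0$) and the orthogonality relation $(u_j, \partial_\alpha u_j)_{L^2} = 0$, one can invert $\mathfrak{h}_{\mathcal{M}}(\alpha_c) - \lambda_j(\alpha_c)$ on the orthogonal complement of $u_j$ and expand $\partial_\alpha u_j$ on the Hilbert basis $(u_k(\cdot, \alpha_c))_{k \in \N^*}$.

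Second, I would use the parity property of Proposition~\ref{p:def-lambda}\eqref{parity}: $u_j$ has parity $(-1)^{j+1}$ and $(\tfrac12 t^2 - \alpha_c)$ is even, so $(\tfrac12 t^2-\alpha_c) u_j$ has the same parity as $u_j$ and therefore has nonzero Fourier coefficients only on those $u_k$ with $k \equiv j \pmod 2$. Combining this with the Feynman-Hellmann formula \eqref{FeHei} at $\alpha_c$ (which states $((\tfrac12 t^2 - \alpha_c) u_j, u_j)_{L^2} = 0$), the expansion only involves $u_k$ with $k \equiv j \pmod{2}$ and $k \neq j$, i.e.\ $k \geq j+2$. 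Substituting yields the classical Temple-type second-order identity
\begin{equation*}
\lambda_j''(\alpha_c) = 2 - 8 \sum_{\substack{k \geq j+2 \\ k \equiv j \;(2)}} \frac{\big( (\tfrac12 t^2 - \alpha_c) u_j, u_k \big)_{L^2}^{\,2}}{\lambda_k(\alpha_c) - \lambda_j(\alpha_c)}\,.
\end{equation*}

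Third, I would bound all denominators from below by $\lambda_{j+2}(\alpha_c) - \lambda_j(\alpha_c)$ and apply Parseval's identity for the even/odd subspaces, taking into account the vanishing diagonal term, to obtain
\begin{equation*}
\sum_{\substack{k \geq j+2 \\ k \equiv j \;(2)}} \big( (\tfrac12 t^2 - \alpha_c) u_j, u_k \big)_{L^2}^{\,2} \leq \big\| (\tfrac12 t^2 - \alpha_c) u_j(\cdot, \alpha_c) \big\|_{L^2}^{\,2}\,.
\end{equation*}
The key algebraic input is then \eqref{eq:newid2}, which at a critical point gives $\|(\tfrac12 t^2 - \alpha_c) u_j(\cdot,\alpha_c)\|^2 = \lambda_j(\alpha_c)/3$. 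Inserting this bound into the previous display yields
\begin{equation*}
\lambda_j''(\alpha_c) \geq \frac{2 \big( 3\lambda_{j+2}(\alpha_c) - 7 \lambda_j(\alpha_c) \big)}{3 \big( \lambda_{j+2}(\alpha_c) - \lambda_j(\alpha_c) \big)}\,,
\end{equation*}
which is strictly positive precisely under the assumption \eqref{hyp37}. There is no real obstacle here — the only step requiring some care is the parity argument ensuring that the index $k=j+1$ does not appear in the denominator (so that $\lambda_{j+1}$ does not replace $\lambda_{j+2}$), which is exactly what makes \eqref{hyp37} the right hypothesis.
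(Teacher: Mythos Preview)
Your proof is correct and follows essentially the same route as the paper's proof of Lemma~\ref{l:estim-lambda-j-gap}. The paper packages the argument via Cauchy--Schwarz and the operator norm bound $\|P_j(\alpha_c)^{-1}\|_{\mathcal{L}(H_j(\alpha_c))} = (\lambda_{j+2}(\alpha_c) - \lambda_j(\alpha_c))^{-1}$ on the parity-restricted orthogonal complement of $u_j$, whereas you write out the eigenfunction expansion explicitly and bound each denominator; both are the same spectral-gap argument and yield the identical lower bound $\lambda_j''(\alpha_c) \geq \tfrac{2}{3}\,(3\lambda_{j+2}(\alpha_c) - 7\lambda_j(\alpha_c))/(\lambda_{j+2}(\alpha_c) - \lambda_j(\alpha_c))$.
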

Let us recall the proof for the sake of completeness, which relies on the following general Lemma~\ref{l:estim-lambda-j-gap}.
Note that Lemma~\ref{lemodd} also holds for $\lambda_2$ instead of $\lambda_1$, as a direct consequence of Lemma~\ref{l:estim-lambda-j-gap}.
We will verify numerically that this condition is indeed satisfied for $\lambda_2$.

\begin{lemma}
\label{l:estim-lambda-j-gap}
For all $j \in \N$ and $\alpha_c$ a critical point of $\lambda_j$, we have 
\begin{align*}
\lambda_j''(\alpha_{c})
&  \geq \frac 23 \; \frac{3
 \lambda_{j+2}(\alpha_c) - 7\lambda_j(\alpha_c)}{\lambda_{j+2}(\alpha_c)-\lambda_j(\alpha_c)}  , \quad  \text{if }j \in \{1,2\} , \\
 \lambda_j''(\alpha_{c})
 & \geq 2 - \frac{8}{3} \lambda_j(\alpha_c) \max \left\{ \frac{1}{\lambda_{j}(\alpha_c)-\lambda_{j-2}(\alpha_c)} , \frac{1}{\lambda_{j+2}(\alpha_c)-\lambda_{j}(\alpha_c)} \right\}, \quad   \text{if }j \geq 3 .
 \end{align*}
\end{lemma}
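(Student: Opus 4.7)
The plan is to expand $\partial_\alpha u_j(\cdot,\alpha_c)$ in the Hilbert basis $(u_k(\cdot,\alpha_c))_{k\in\N^*}$ and plug this into the second-derivative formula~\eqref{eq:2.15}. To get the expansion coefficients, I would use the resolvent-type identity~\eqref{e:Qlambda}: at the critical point $\lambda_j'(\alpha_c)=0$, this becomes $(\mathfrak h_{\mathcal M}(\alpha_c)-\lambda_j(\alpha_c))\partial_\alpha u_j = 2(\frac{t^2}{2}-\alpha_c)u_j$, so testing against $u_k$ gives
\begin{equation*}
(\partial_\alpha u_j,u_k)_{L^2}=\frac{2\,\big((\tfrac{t^2}{2}-\alpha_c)u_j,u_k\big)_{L^2}}{\lambda_k(\alpha_c)-\lambda_j(\alpha_c)}\qquad (k\neq j),
\end{equation*}
while the $k=j$ coefficient vanishes by the orthogonality relation $(\partial_\alpha u_j,u_j)_{L^2}=0$ stated in the Feynman--Hellmann proposition. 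Substituting in~\eqref{eq:2.15} gives
\begin{equation*}
\lambda_j''(\alpha_c)=2-8\sum_{k\neq j}\frac{A_k}{\lambda_k(\alpha_c)-\lambda_j(\alpha_c)}, \qquad A_k:=\big((\tfrac{t^2}{2}-\alpha_c)u_j,u_k\big)^2_{L^2}.
\end{equation*}

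Next I would use two structural facts to evaluate the moments $A_k$. First, by the parity statement in Proposition~\ref{p:def-lambda}, $u_j$ has parity $j+1$ and $\tfrac{t^2}{2}-\alpha_c$ is even, so $(\tfrac{t^2}{2}-\alpha_c)u_j$ has parity $j+1$ and thus $A_k=0$ unless $k\equiv j \pmod 2$. Second, Parseval applied to $f:=(\tfrac{t^2}{2}-\alpha_c)u_j$ in the basis $(u_k)$ combined with the critical-point identity~\eqref{eq:newid2} gives
\begin{equation*}
\sum_{k\equiv j\,(2)} A_k = \|f\|^2_{L^2} = \tfrac{1}{3}\lambda_j(\alpha_c).
\end{equation*}
Moreover $A_j=\big((\tfrac{t^2}{2}-\alpha_c)u_j,u_j\big)^2=\tfrac14\lambda_j'(\alpha_c)^2=0$ at the critical point by~\eqref{FeHei}, so the sum of $A_k$ over $k\neq j$, $k\equiv j\,(2)$, still equals $\tfrac13\lambda_j(\alpha_c)$.

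It then suffices to estimate $S:=\sum_{k\neq j,\,k\equiv j(2)}A_k/(\lambda_k-\lambda_j)$ from above. If $j\in\{1,2\}$, there is no $k<j$ with $k\equiv j\,(2)$, so all denominators are positive and satisfy $\lambda_k(\alpha_c)-\lambda_j(\alpha_c)\geq \lambda_{j+2}(\alpha_c)-\lambda_j(\alpha_c)$; hence
\begin{equation*}
S\leq \frac{1}{\lambda_{j+2}(\alpha_c)-\lambda_j(\alpha_c)}\cdot\frac{\lambda_j(\alpha_c)}{3},
\end{equation*}
and the algebraic simplification $2-\tfrac{8\lambda_j}{3(\lambda_{j+2}-\lambda_j)}=\tfrac{2}{3}\cdot\tfrac{3\lambda_{j+2}-7\lambda_j}{\lambda_{j+2}-\lambda_j}$ yields the first bound. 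For $j\geq 3$ the sum splits into a negative part (over $k\leq j-2$) and a positive part (over $k\geq j+2$), and I would bound $S$ crudely by $|S|\leq \sum_{k\neq j}A_k/|\lambda_k-\lambda_j|$; using $|\lambda_k-\lambda_j|\geq \min\{\lambda_j-\lambda_{j-2},\lambda_{j+2}-\lambda_j\}$ and the Parseval identity above produces the stated bound.

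There is no real obstacle here: the whole argument is an eigenfunction-expansion plus two identities already established (the Feynman--Hellmann formula and the dilation-invariance identity~\eqref{eq:newid2}). The only subtlety is to make sure the parity restriction is used both when computing $\sum A_k$ via Parseval and when identifying $\lambda_{j\pm 2}(\alpha_c)$ as the closest spectral neighbors of $\lambda_j(\alpha_c)$ of the relevant parity.
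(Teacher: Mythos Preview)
Your proof is correct and follows essentially the same approach as the paper: both use~\eqref{e:Qlambda} at the critical point to relate $\partial_\alpha u_j$ to $(\tfrac{t^2}{2}-\alpha_c)u_j$, restrict to the same-parity subspace, invoke the spectral gap to $\lambda_{j\pm 2}$, and close with the identity~\eqref{eq:newid2}. The only cosmetic difference is that the paper first applies Cauchy--Schwarz in~\eqref{eq:2.15} and then bounds $\|\partial_\alpha u_j\|$ via the operator norm of the restricted resolvent, whereas you expand directly in the eigenbasis and bound the resulting sum term by term via Parseval; these are two packagings of the same spectral estimate and yield identical bounds.
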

\begin{remark}
Unfortunately, the inequalities obtained for $\lambda_j$ with $j \geq 3$ are less readable/useful. Also, they become inaccurate for large eigenvalues since the gap is asymptotically negligible with respect to the eigenvalue.
\end{remark}

\begin{proof}
We start from the expression of $\lambda_j''(\alpha)$ in~\eqref{eq:2.15}, in which the Cauchy-Schwarz inequality yields
\begin{align}
\label{eq:2.15-ineq}
\lambda_j''(\alpha) \geq  2  - 4 \left\| (\frac 12 t^{2} -\alpha)
u_j(\cdot,\alpha) \right\|_{L^2(\R)} \|\partial_\alpha u_j(\cdot,\alpha) \|_{L^2(\R)} ,
\end{align}
and the remaining part of the proof consists in estimating the right hand-side. To estimate $\|\partial_\alpha u_j(\cdot,\alpha) \|_{L^2(\R)}$, we recall the equation satisfied by $\partial_\alpha u_j(\cdot,\alpha)$ in~\eqref{e:Qlambda}.
We set 
$$
H_j (\alpha):= \{v \in L^2(\R),v \text{ even (resp. odd)}, v \perp u_j(\cdot, \alpha)\} \quad \text{if } j \text{ is odd (resp. even)},
$$ 
and notice that $\partial_\alpha u_j(\cdot,\alpha) \in H_j (\alpha)$ for all $\alpha \in \R, j \in \N^*$. Moreover, if $\Pi_{H_j (\alpha)}$ denotes the orthogonal projector onto $H_j(\alpha)$, we have $\mathfrak h_{\mathcal M}(\alpha) \Pi_{H_j (\alpha)} = \Pi_{H_j (\alpha)} \mathfrak h_{\mathcal M}(\alpha)$ and the operator
$$
P_j(\alpha) := \left( \mathfrak h_{\mathcal M}(\alpha) - \lambda_j(\alpha) \right) \Pi_{H_j (\alpha)} 
$$
is an isomorphism of $H_j (\alpha)$, since $\ker  \left( \mathfrak h_{\mathcal M}(\alpha) - \lambda_j(\alpha) \right)  = \vect u_j(\cdot, \alpha)$ (see Proposition~\ref{p:def-lambda}).
Now if $\alpha_c$ is a critical point of $\lambda_j$, then~\eqref{e:Qlambda} rewrites 
\begin{equation}\label{e:Qlambda-bis}
\left(\mathfrak h_{\mathcal M}(\alpha_c) -\lambda_j(\alpha_c)\right) \d_\alpha u_j(\cdot, \alpha_c) = 2 \left(\frac{t^{2}}{2}- \alpha_c \right)  u_j(\cdot,\alpha_c)\,.
\end{equation} 
Note also that, according to the first variation formula~\eqref{FeHei}, $\left(\frac{t^{2}}{2}- \alpha_c \right)  u_j(\cdot,\alpha_c) \in H_j(\alpha_c)$ as well. Hence~\eqref{e:Qlambda-bis} is an equation in $H_j(\alpha_c)$, namely
$$
P_j(\alpha)\d_\alpha u_j(\cdot, \alpha_c) = 2 \left(\frac{t^{2}}{2}- \alpha_c \right)  u_j(\cdot,\alpha_c),
$$ 
or equivalently
$$
\d_\alpha u_j(\cdot, \alpha_c) = 2 P_j(\alpha_c)^{-1} \left( \left(\frac{t^{2}}{2}- \alpha_c \right)  u_j(\cdot,\alpha_c) \right) .
$$ 
As a consequence, 
$$
\| \d_\alpha u_j(\cdot, \alpha_c)  \|_{L^2(\R)} \leq  2  \| P_j(\alpha_c)^{-1}\|_{\mathcal{L}(H_j(\alpha_c))}  \left\|  \left(\frac{t^{2}}{2}- \alpha_c \right)  u_j(\cdot,\alpha_c) \right\|_{L^2(\R)} .
$$ 
But $P_j(\alpha)$ is a selfadjoint operator on $H_j(\alpha_c)$ with spectrum $\{ \cdots , \lambda_{j-2}(\alpha)-\lambda_{j}(\alpha), \lambda_{j+2}(\alpha)-\lambda_{j}(\alpha), \lambda_{j+4}(\alpha)-\lambda_{j}(\alpha), \cdots\}$ if $j \geq 3$, and $\{ \lambda_{j+2}(\alpha)-\lambda_{j}(\alpha), \lambda_{j+4}(\alpha)-\lambda_{j}(\alpha), \cdots\}$ if $j \in \{1,2\}$.
As a consequence 
\begin{equation}
\label{e:estim-P-j}
\| P_j(\alpha_c)^{-1}\|_{\mathcal{L}(H_j(\alpha_c))}   =  
\left\{
\begin{array}{ll} 
\displaystyle \frac{1}{\lambda_{j+2}(\alpha)-\lambda_{j}(\alpha)} , &  \text{if }j \in \{1,2\} , \\
\displaystyle  \max \left\{ \frac{1}{\lambda_{j}(\alpha)-\lambda_{j-2}(\alpha)} , \frac{1}{\lambda_{j+2}(\alpha)-\lambda_{j}(\alpha)} \right\}, &  \text{if }j \geq 3 . 
 \end{array}
 \right.
\end{equation}
Coming back to~\eqref{eq:2.15-ineq}, we have obtained that for any $j \in \N$ and any critical point $\alpha_c$ of $\lambda_j$, we have
\begin{align*}
 \lambda_j''(\alpha_c) \geq  2  - 8\| P_j(\alpha_c)^{-1}\|_{\mathcal{L}(H_j(\alpha_c))} \left\| (\frac 12 t^{2} -\alpha_c)
u_j(\cdot,\alpha_c) \right\|_{L^2(\R)}^2 ,
\end{align*}
which, after having used~\eqref{eq:newid2}, rewrites
\begin{align*}
 \lambda_j''(\alpha_c) \geq  2  - \frac{8}{3}\| P_j(\alpha_c)^{-1}\|_{\mathcal{L}(H_j(\alpha_c))} \lambda_j(\alpha_c)
 \end{align*}
Recalling the expression of $\| P_j(\alpha_c)^{-1}\|_{\mathcal{L}(H_j(\alpha_c))}$ in~\eqref{e:estim-P-j} then concludes the proof of the lemma.
\end{proof}

\subsection{Upper bounds for $\lambda_1(\alpha)$}
In order to apply Lemma~\ref{lemodd}, we need to prove, at a critical point $\alpha_c$ of $\lambda_1$, that $\frac{\lambda_3(\alpha_c)}{\lambda_1(\alpha_c)} > \frac73$. To this aim, we need to have a rough localization of $\alpha_c$, and to provide with an upper bound for $\lambda_1$ lower bound for $\lambda_3$ near the place where $\alpha_c$ has been localized.
We start with the following lemma.
\begin{lemma}
\label{l:upper-lambda1}
Assume $\alpha_c$ is a critical point for $\lambda_1$. Then we have 
\begin{equation}\label{eq:alphac}
 \alpha_c  \in \left(0, \left(\frac{24}{25}\right)^{1/3}\right) \subset (0,1).
\end{equation}
Moreover, we have 
\begin{equation}\label{eq:lambda1a-bis}
\lambda_1(\alpha) \leq \frac{4}{5}\left(\frac{9}{5}\right)^{1/3}\approx 0.97315 , \quad \text{ for all } \alpha \in  \left(0, \left(\frac{24}{25}\right)^{1/3}\right) .
\end{equation}
\end{lemma}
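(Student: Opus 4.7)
The argument combines the Feynman--Hellmann identity with Proposition~\ref{lem:abound} to reduce the localization of critical points to a single Rayleigh--Ritz upper bound, then establishes this bound with a Gaussian trial function.

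\emph{Structural reduction.} By~\eqref{FeHei}, $\lambda_1'(\alpha)=2\alpha-\|t\,u_1(\cdot,\alpha)\|_{L^2(\R)}^2$, so
\[
\frac{d}{d\alpha}\bigl(\lambda_1(\alpha)-\alpha^2\bigr) = -\|t\,u_1(\cdot,\alpha)\|_{L^2(\R)}^2 < 0,
\]
and $\alpha\mapsto \lambda_1(\alpha)-\alpha^2$ is strictly decreasing on $\R$. Since $\lambda_1(0)>0$ and $\lambda_1(\alpha)\sim\sqrt{2\alpha}$ as $\alpha\to+\infty$ by~\eqref{eq:asba}, there is a unique $\alpha^*>0$ with $\lambda_1(\alpha^*)=(\alpha^*)^2$, and the inequality $\lambda_1(\alpha)>\alpha^2$ is equivalent to $\alpha<\alpha^*$. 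Any critical point $\alpha_c$ is positive by Corollary~\ref{c:decreasing-neg}, and satisfies $\alpha_c^2<\lambda_1(\alpha_c)$ by Proposition~\ref{lem:abound} applied to the odd index $j=1$; hence $\alpha_c\in(0,\alpha^*)$. Therefore~\eqref{eq:alphac} reduces to the single variational estimate
\[
\lambda_1\bigl((24/25)^{1/3}\bigr) \leq (24/25)^{2/3} = \tfrac{4}{5}(9/5)^{1/3}.
\]

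\emph{Gaussian trial function.} I would establish this via Rayleigh--Ritz with $u_\gamma(t)=e^{-\gamma t^2/2}$, $\gamma>0$. Computing the Gaussian moments gives
\[
\lambda_1(\alpha) \leq \frac{(\mathfrak h _{\mathcal M}(\alpha)u_\gamma,u_\gamma)_{L^2}}{(u_\gamma,u_\gamma)_{L^2}} = \frac{\gamma}{2}+\frac{3}{16\gamma^2}-\frac{\alpha}{2\gamma}+\alpha^2 =: R(\gamma,\alpha).
\]
The tailor-made choice $\gamma=3/(5\alpha)$ simplifies the right-hand side to
\[
\lambda_1(\alpha) \leq f(\alpha) := \frac{3}{10\alpha}+\frac{11\alpha^2}{16},
\]
and the elementary identity $f(\alpha)=\alpha^2 \Leftrightarrow 3/(10\alpha)=5\alpha^2/16 \Leftrightarrow \alpha^3=24/25$ gives $f((24/25)^{1/3})=(24/25)^{2/3}$. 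This yields the required inequality, proving~\eqref{eq:alphac}.

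\emph{Uniform bound on the interval.} The estimate $\lambda_1(\alpha)\leq f(\alpha)$ does not directly yield~\eqref{eq:lambda1a-bis} since $f(\alpha)\to+\infty$ as $\alpha\to0^+$. Solving the cubic $f(\alpha)=(24/25)^{2/3}$ explicitly (using that $(24/25)^{1/3}$ is already a root), the other positive root is $\alpha_-=(24/25)^{1/3}(\sqrt{341}-11)/22\approx 0.335$, and $f(\alpha)\leq (24/25)^{2/3}$ on $[\alpha_-,(24/25)^{1/3}]$. To cover $\alpha\in(0,\alpha_-)$, I would instead use the fixed choice $\gamma=(3/4)^{1/3}$ (the minimizer of $R(\cdot,0)$), yielding
\[
\lambda_1(\alpha) \leq (3/4)^{4/3} - \frac{\alpha}{2(3/4)^{1/3}} + \alpha^2,
\]
which is easily checked to stay well below $(24/25)^{2/3}$ throughout $(0,\alpha_-)$ (indeed on a much larger range). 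Combining the two regimes yields~\eqref{eq:lambda1a-bis}. The main obstacle is not conceptual but bookkeeping: carefully verifying the explicit cubic and quadratic inequalities in $\alpha$ needed to show the two $\gamma$-regimes actually cover $(0,(24/25)^{1/3})$ with overlap.
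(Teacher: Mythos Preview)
Your proof is correct and rests on the same ingredients as the paper: the Gaussian Rayleigh quotient $R(\gamma,\alpha)=\gamma/2+3/(16\gamma^2)-\alpha/(2\gamma)+\alpha^2$, the two specific choices $\gamma=(3/4)^{1/3}$ and $\gamma=3/(5\alpha)$, and Proposition~\ref{lem:abound}. The organization differs in two minor but pleasant ways. First, your observation that $\alpha\mapsto\lambda_1(\alpha)-\alpha^2$ is strictly decreasing reduces the localization~\eqref{eq:alphac} to evaluating the bound at the single point $(24/25)^{1/3}$; the paper instead plugs the upper bound $f(\alpha_c)$ directly into the inequality $\alpha_c^2<\lambda_1(\alpha_c)$ at the critical point, which yields the same cubic $\alpha_c^3<24/25$. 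Second, for the uniform bound~\eqref{eq:lambda1a-bis} you factor the cubic $f(\alpha)=(24/25)^{2/3}$ explicitly to locate the crossover $\alpha_-=(24/25)^{1/3}(\sqrt{341}-11)/22$, whereas the paper simply splits at the convenient point $\alpha=0.7$ and checks each piece numerically. Your route is conceptually tidier; the paper's trades the cubic algebra for a looser but more elementary patching.
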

\begin{proof}
We denote, for $v \in D(\mathfrak h_{\mathcal M}(0))$ the energy  
$$
\mathscr{E}(v, \alpha) := \left( \mathfrak h_{\mathcal M}(\alpha)v,v\right)_{L^2(\mathbb R)}  = \|v'\|_{L^2(\R)}^2 + \left\|\left( \frac12 t^2-\alpha\right)v\right\|_{L^2(\R)}^2 ,
$$
and recall that 
\begin{align}
\label{e:minimax}
\lambda_1(\alpha) = \mathscr{E}(u_1(\cdot,\alpha), \alpha) = \min \left\{ \mathscr{E}(v, \alpha) , v \in D(\mathfrak h_{\mathcal M}(0)), \|v\|_{L^2(\R)} = 1\right\} .
\end{align}
We compute\footnote{This idea was rather efficient for giving an upperbound
 of the analog problem for the De Gennes model.} the energy $\mathscr{E}(u_\rho, \alpha)$ of the $L^2$-normalized 
Gaussian
$$
u_\rho = c_\rho \exp \left(- \frac \rho 2 t^2 \right) , \quad c_\rho = \sqrt{\frac{2\pi}{\rho}} , \quad \rho >0 . 
$$
Using that 
$$
\|t u_\rho\|_{L^2(\R)}^2 = \frac{1}{2\rho} , \quad \|t^2 u_\rho\|_{L^2(\R)}^2 = \frac{3}{(2\rho)^2} ,
$$
 we obtain
$$
\mathscr{E}(u_\rho,\alpha) = \frac \rho 2 +  \frac{3}{16} \rho^{-2}
 - \frac{\alpha}{2\rho} + \alpha ^2\,.
$$
From~\eqref{e:minimax}, we deduce that, for any $\alpha \in \R$ and $\rho >0$, 
\begin{equation}\label{meilleur}
\lambda_1(\alpha) \leq \mathscr{E}(u_\rho,\alpha) =  \frac \rho 2 +  \frac{3}{16} \rho^{-2}
 - \frac{\alpha}{2\rho} + \alpha ^2\,.
\end{equation} 
We now write this inequality for particular values of $\rho$.

{\it First upper bound.} \\
 If we take the minimizing $\rho$ for $\alpha=0$, we obtain
$
\rho_0= (\frac 34)^{\frac 13}
$
and the corresponding energy is
$$
\mathscr{E}(u_{\rho_0},\alpha)
 = \frac 12 \big(\frac 34\big)^{\frac 13} +  \frac{3}{16} \big(\frac 34\big)^{-\frac
   23}
 + \alpha^2 - \frac 12 \alpha  \big(\frac 34\big)^{-\frac 13}\;.
$$
Hence we obtain, for this specific $\rho_0$, for all $\alpha \in \R$,
\begin{equation}\label{za}
\lambda_1(\alpha) \leq \mathscr{E}(u_{\rho_0},\alpha) = \alpha^2- 6^{-\frac 13} \alpha  +\left(\frac 34\right)^\frac 43  .
 \end{equation} 
As in \cite{He2008}, we use this upper bound for $\alpha>0$  small enough, namely, for $\alpha \in (0,0.7)$. 
 The minimum of the function $\alpha \mapsto \mathscr{E}(u_{\rho_0},\alpha)$ is attained at $\alpha = \frac{6^{-\frac 13}}{2} \approx 0.28 \in (0,0.7)$.
 Thus, on the interval $\alpha \in (0,\frac{7}{10})$, we have 
\begin{equation}\label{uplambda1a}
\lambda_1(\alpha) \leq \max \left( \mathscr{E}(u_{\rho_0},0) ,  \mathscr{E}\left(u_{\rho_0},\frac{7}{10}\right) \right) = \mathscr{E}\left(u_{\rho_0},\frac{7}{10}\right)  \approx 0.79\quad\mbox{for all } \alpha \in (0,0.7)\,.
\end{equation}
{\it  Second upper bound}~\\ Without being optimal, we can take $\rho= \frac{1}{2\alpha}>0$ in \eqref{meilleur} (corresponding to the cancellation of the last two terms) which leads to
\begin{equation}\label{uplambda1b}
\lambda_1(\alpha) \leq \mathscr{E}(u_{ \frac{1}{2\alpha}} ,\alpha) = \frac{1}{4\alpha}  +  \frac{3}{4} \alpha^2 \,,
\end{equation} 
for all $\alpha >0$.
Implementing this in \eqref{eq:abound} we deduce in particular that if $\alpha_c$ is a critical point of $\lambda_1$, then
$$
\alpha_c^2 <  \frac{1}{4\alpha_c}  +  \frac{3}{4} \alpha_c^2 \,.
$$
According to Corollary~\ref{c:decreasing-neg}, we have $\alpha_c >0$, so that this inequality is equivalent to $\alpha_c <1$.

We then note that $\alpha \mapsto \mathscr{E}(u_{ \frac{1}{2\alpha}} ,\alpha) =  \frac{1}{4\alpha}  +  \frac{3}{4} \alpha^2$ reaches its minimum on $\R_+^*$ at $\alpha = 6^{-1/3}\approx 0.55$. In particular, this function is increasing on $(0.7\,,\, 1)$ and thus
\begin{equation}\label{uplambda1c}
 \mathscr{E}(u_{ \frac{1}{2\alpha}} ,\alpha)  \leq  \mathscr{E}(u_{ \frac{1}{2}} ,1) 
 \leq 1, \quad \mbox{ for all } \alpha\in (0.7\,,\, 1)\,.
\end{equation}
Using \eqref{uplambda1a}, \eqref{uplambda1b} and \eqref{uplambda1c}, we also deduce that $\lambda_1(\alpha) \leq 1$ for all $\alpha \in (0,1)$.\\
{\it Third upper bound}.\\
We refine slightly the second upper bound by taking $\rho= \frac{3} {5 \alpha}$ in \eqref{meilleur}.  We obtain
\begin{equation*}
\lambda_1(\alpha) \leq\mathscr{E}(u_{ \frac{3}{5\alpha}} ,\alpha) =
 \frac{3}{10 \alpha} +  \frac{3\times 25}{16\times 9 } \alpha^2 
 - \frac{5}{6} \alpha^2  + \alpha ^2 =   \frac{3}{10 \alpha} -  \frac{ 5}{16 } \alpha^2  + \alpha ^2   =  \frac{3}{10\alpha} +  \frac{11}{16} \alpha^2   .
\end{equation*} 
Using again~\eqref{eq:abound} we deduce that if $\alpha_c$ is a critical point of $\lambda_1$, then
$$
\alpha_c^2 < \frac{3}{10\alpha_c} +  \frac{11}{16} \alpha_c^2  
$$
which, since $\alpha_c >0$ (according to Corollary~\ref{c:decreasing-neg}), is equivalent to the small improvement
$$
\alpha_c  < (24/25)^{1/3} \approx 0.98645 ,
$$
which proves~\eqref{eq:alphac}.\\
 Moreover, $\alpha \mapsto \mathscr{E}(u_{ \frac{3}{5\alpha}} ,\alpha)$ reaches its minimum on $\R_+^*$ at $\alpha =(12/55)^{1/3}\approx 0.60$. In particular, this function is increasing on $(0.7\,,\, 1)$ and thus
\begin{equation*} 
 \lambda_1(\alpha) \leq \mathscr{E}(u_{ \frac{3}{5\alpha}} ,\alpha)  \leq  \mathscr{E}(u_{ \frac{3}{5(24/25)^{1/3} }} ,(24/25)^{1/3} )  
=\frac{4}{5}\left(\frac{9}{5}\right)^{1/3}\approx 0.97315, 
\end{equation*}
 for all $\alpha\in (0.7, (24/25)^{1/3} )$.
Together with \eqref{uplambda1a} we finally deduce~\eqref{eq:lambda1a-bis}, which concludes the proof of the lemma.
\end{proof}

\subsection{Lower bounds for $\lambda_3 (\alpha)$}

\begin{lemma}
\label{l:lower-lambda3}
For all $\alpha \in \left(0 , \left(\frac{24}{25}\right)^{1/3}\right]$,
\begin{align}
 \label{y3b}
\lambda_3(\alpha) \geq  \sqrt{15} -  \frac{6}{5} \left(\frac{24}{25}\right)^{1/3}- \frac{9}{25} \approx 2.3292 .
\end{align}
\end{lemma}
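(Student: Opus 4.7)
My plan is to compare $\mathfrak h_{\mathcal M}(\alpha)$ from below with a (shifted) harmonic oscillator via a pointwise bound on the potential, and then invoke the explicit eigenvalues of the harmonic oscillator together with the identification $\lambda_3(\alpha)=\lambda_2^N(\alpha)$ from Section~\ref{ss2.1}.

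The key inequality is the AM--GM bound
\begin{equation*}
\frac{t^4}{4}+\beta \;\geq\; \sqrt{\beta}\,t^2, \qquad \beta\geq 0,
\end{equation*}
applied with $\beta=(\gamma+\alpha)^2$ for a parameter $\gamma>0$ to be chosen. Expanding $\big(\tfrac12 t^2-\alpha\big)^2=\tfrac14 t^4-\alpha t^2+\alpha^2$ and using the above inequality gives, pointwise in $t\in\R$,
\begin{equation*}
\Big(\tfrac12 t^2-\alpha\Big)^2 \;\geq\; \gamma\, t^2 - \gamma^2 - 2\alpha\gamma.
\end{equation*}
Hence, in the sense of quadratic forms on $D(\mathfrak h_{\mathcal M}(0))$,
\begin{equation*}
\mathfrak h_{\mathcal M}(\alpha) \;\geq\; -\frac{d^2}{dt^2} + \gamma t^2 \;-\; \gamma^2 - 2\alpha\gamma.
\end{equation*}

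Next, I would use the min-max principle. Since $\lambda_3(\alpha)=\lambda_2^N(\alpha)$ (second Neumann eigenvalue on $\R^+$), and since the Neumann eigenvalues on $\R^+$ of $-\frac{d^2}{dt^2}+\gamma t^2$ are $\sqrt{\gamma}(4k+1)$ for $k\geq 0$ (they are the eigenvalues of the full-line harmonic oscillator restricted to even Hermite functions), the second such Neumann eigenvalue is $5\sqrt{\gamma}$. Therefore
\begin{equation*}
\lambda_3(\alpha) \;\geq\; 5\sqrt{\gamma} \;-\; \gamma^2 \;-\; 2\alpha\gamma, \qquad \text{for all } \gamma>0.
\end{equation*}

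Finally, I pick $\gamma=\tfrac{3}{5}$, which yields $5\sqrt{\gamma}=\sqrt{15}$, $\gamma^2=\tfrac{9}{25}$, and $2\alpha\gamma=\tfrac{6\alpha}{5}$. Since $\alpha\in\big(0,(24/25)^{1/3}\big]$, bounding $\tfrac{6\alpha}{5}$ by its value at the right endpoint gives
\begin{equation*}
\lambda_3(\alpha) \;\geq\; \sqrt{15} \;-\; \frac{9}{25} \;-\; \frac{6}{5}\Big(\frac{24}{25}\Big)^{1/3},
\end{equation*}
which is~\eqref{y3b}. The only real decision in the argument is the choice of $\gamma$; it was reverse-engineered from the stated bound, but one can also justify it as coming (approximately) from optimizing the right-hand side in $\gamma$ near the relevant range of $\alpha$. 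No substantial obstacle is expected, since the whole argument reduces to AM--GM, the min-max principle, and explicit Hermite eigenvalues.
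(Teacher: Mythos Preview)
Your proof is correct and follows essentially the same approach as the paper: both derive the pointwise bound $(\tfrac12 t^2-\alpha)^2 \geq \gamma t^2 - \gamma^2 - 2\alpha\gamma$ (you via AM--GM, the paper via the equivalent perfect-square identity), compare with the harmonic oscillator, and choose $\gamma=3/5$. The only cosmetic difference is that the paper applies the min--max principle directly on the full line (the third eigenvalue of $-\frac{d^2}{dt^2}+\gamma t^2$ on $\R$ is $5\sqrt{\gamma}$), whereas you route through the identification $\lambda_3=\lambda_2^N$ and the Neumann spectrum on $\R^+$; these are of course the same thing.
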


\begin{proof}
We push the idea of comparing our operator with an harmonic
oscillator.
We first notice that, for any $\gamma \geq 0, \alpha \in \R, t \in \R$, we have
\begin{equation}\label{eq:lwb}
\gamma t^2 + \alpha^2 - (\alpha +\gamma)^2 \leq
\left(\frac{1}{2}t^{2}-\alpha\right)^2\;.
\end{equation}
Indeed, the polynomial $\frac{X^2}{4}-(\alpha+\gamma)X+(\alpha+\gamma)^2$ is nonnegative on $\R$ for all $\alpha, \gamma \in \R$, and thus $\gamma X-(\alpha+\gamma)^2 \leq \frac{X^2}{4}-\alpha X$, which implies~\eqref{eq:lwb} for $X=t^2$.
From this we can compare the operator $\mathfrak h_{\mathcal M}(\alpha)$ to the
 Harmonic Oscillator $$
 \mathfrak H_{\alpha, \gamma} := -\frac{d}{dt} + V_\gamma(t) , \quad 
 V_\gamma(t)= \gamma t^2 + \alpha^2 -
 (\alpha +\gamma)^2.
 $$
 According to the minimax principle (see e.g.~\cite[Lemma~B.6]{BBGL} or~\cite[Chapter~11 and discussion top of p148]{Helffer:book-spectral-theory}), denoting $\Sp( \mathfrak H_{\alpha, \gamma} ) := \{\tilde\lambda_j(\alpha,\gamma), j \in \N^*\}$, \eqref{eq:lwb} implies 
 $$
 \lambda_j(\alpha) \geq \tilde\lambda_j(\alpha,\gamma) , \quad \text{ for all } j \in \N^* . 
 $$
 Recalling that the eigenvalues of the harmonic oscillator $-\frac{d^2}{dt^2}+\gamma t^2$ are given by $\{(2k+1)\sqrt{\gamma}, k \in \N\}$, we deduce that, for any $\gamma>0$, 
\begin{equation} \label{y3}
\lambda_3(\alpha) \geq \tilde\lambda_3(\alpha,\gamma)  =  5 \gamma^\frac 12 - 2 \gamma \alpha -
\gamma^2\,.
\end{equation}
We finally choose a particular value of $\gamma$. Choosing $\gamma=3/5$, we deduce that 
\begin{equation*} 
\lambda_3(\alpha) \geq \tilde\lambda_3(\alpha,3/5)  = \sqrt{15} -  \frac{6}{5} \alpha - \frac{9}{25}, 
\end{equation*}
and in particular that for all $\alpha \in(0,1]$
\begin{equation*} 
\lambda_3(\alpha) \geq  \tilde\lambda_3(\alpha,3/5) \geq \tilde\lambda_3(1,3/5) = \sqrt{15} -  \frac{6}{5} - \frac{9}{25} \approx 2.313 .
\end{equation*}
More precisely,  for all $\alpha \in \left(0 , \left(\frac{24}{25}\right)^{1/3}\right]$,
\begin{equation*}
\lambda_3(\alpha) \geq  \tilde\lambda_3(\alpha,3/5) \geq \tilde\lambda_3\left(\left(\frac{24}{25}\right)^{1/3},3/5\right) = \sqrt{15} -  \frac{6}{5} \left(\frac{24}{25}\right)^{1/3}- \frac{9}{25} \approx 2.3292 ,
\end{equation*}
which concludes the proof of the lemma.
\end{proof}

\subsection{End of the proof of Theorem~\ref{t:lambda1}}
From Lemmas~\ref{lemodd},~\ref{l:upper-lambda1} and~\ref{l:lower-lambda3}, we may now conclude the proof of Theorem~\ref{t:lambda1}.
\begin{proof}[End of the proof of Theorem~\ref{t:lambda1}]
Let $\alpha_c$ be a critical point of $\alpha \mapsto \lambda_1(\alpha)$. 
As a consequence of Lemmas~\ref{l:upper-lambda1} and~\ref{l:lower-lambda3}, namely Equations~\eqref{eq:alphac},~\eqref{eq:lambda1a-bis} and~\eqref{y3b}, we have necessarily 
$$
\lambda_1(\alpha_c) \leq \frac{4}{5}\left(\frac{9}{5}\right)^{1/3}\approx 0.97315 , \quad  \lambda_3(\alpha_c) \geq  \sqrt{15} -  \frac{6}{5} \left(\frac{24}{25}\right)^{1/3}- \frac{9}{25} \approx 2.3292 .
$$
As a consequence, we have 
\begin{equation}
\label{e:quotient-31}
\frac{\lambda_3(\alpha_c)}{\lambda_1(\alpha_c)} \geq  \frac{5}{4}\left(\frac{9}{5}\right)^{-1/3}\left(\sqrt{15} -  \frac{6}{5} \left(\frac{24}{25}\right)^{1/3}- \frac{9}{25} \right) \approx 2.3935
 > \frac73 \approx 2.3333 .
\end{equation}
Using Lemma~\ref{lemodd}, we deduce that $\lambda_1''(\alpha_c) >0$ for any critical point $\alpha_c$. 
According to~\eqref{e:infty-a-infty}, $\lambda_1(\alpha)\to+\infty$ as $\alpha \to +\infty$. These two properties imply uniqueness (and nondegeneracy) of the critical point $\alpha_c$, which is necessarily a minimum, and concludes the proof of the lemma.
 \end{proof}

\subsection{Numerics}
\label{s:numerics}
In the present subsection, we collect numerical data and figures communicated to us by Mikael Persson Sundqvist. The latter are obtained (using the  function NDEigenvalues of mathematica) by computing numerically the first eigenvalues for the Dirichlet problem on the bounded interval $[-20,20]$.

\medskip 
On Figure~\ref{f:lambda-3-1} is represented the graph of the quotient $\alpha \mapsto \lambda_3(\alpha)/\lambda_1(\alpha)$ for $\alpha \in (0,1)$. 
This illustrates the proof of Theorem~\ref{t:lambda1} relying on Lemmas~\ref{lemodd},~\ref{l:upper-lambda1} and~\ref{l:lower-lambda3}, in which we prove that 
$\frac{\lambda_3(\alpha)}{\lambda_1(\alpha)} \geq  2.39$ for $\alpha \in (0, \left(\frac{24}{25}\right)^{1/3})$.
Here Figure~\ref{f:lambda-3-1} suggests that $\frac{\lambda_3(\alpha)}{\lambda_1(\alpha)} \geq  4$ for all $\alpha \in (0,1)$.
Matematica actually gives $\frac{\lambda_3(\alpha)}{\lambda_1(\alpha)} \geq  4.07529$ (with the acccuracy of  the calculator) with a great stability when exchanging from Dirichlet to Neumann boundary conditions at the endpoints of the bounded interval $[-20,20]$\footnote{Due to Agmon estimates, it is rather clear that  the error is very small.}.
Note that in the limit $\alpha \rightarrow + \infty$, we have  $\lambda_3(\alpha)/\lambda_1(\alpha) \to  3$ (as a consequence of~\eqref{eq:asba}). 
In particular the condition $\lambda_3(\alpha)/\lambda_1(\alpha) >7/3$ holds for $\alpha$ large. This gives another way to see that there are no critical point for large $\alpha$, although this does not furnish any explicitly computable bound.\begin{figure}[h]
\begin{center}
\includegraphics[width=20pc, height=10pc]{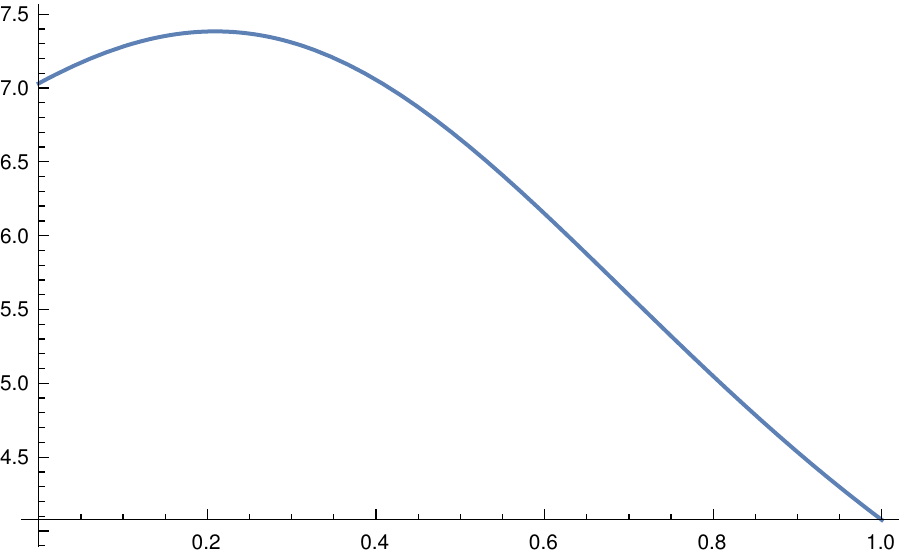}
\end{center}
\caption{ Graph of the quotient $\lambda_3/\lambda_1$ over $(0,1)$} \label{f:lambda-3-1}
\end{figure}

\medskip 
On Figure~\ref{f:graphe-6-vp}, the graph of the first six eigenvalues $\alpha \mapsto \lambda_j(\alpha)$ is represented on the interval $[-2.5,6.5]$, which contains the change of variation of these six functions.
This figure shows a unique critical point, which looks nondegenerate, and therefore contributes to motivate Conjecture~\ref{conj1}.
Note that the asymptotic behavior as $\alpha \to \pm \infty$ is also seen on the figure: we have and $\lambda_{2k+1}(\alpha) \sim \lambda_{2k+2} (\alpha) \sim  \sqrt{2}( 2k+1)\alpha^{1/2}$ as $\alpha \to +\infty$ see~\eqref{eq:asba}--\eqref{eq:asbb}, and a single-well analysis~\cite{HeSj,He88} would yield a precise asymptotic expansion as $\alpha \to -\infty$ as well.
Note that, as  $\alpha \rightarrow +\infty$, Figure~\ref{f:graphe-6-vp} exhibits pairs of asymptotically exponentially close curves, which is predicted by the double well analysis~\eqref{eq:asbb}.
On table~\ref{t:tableau-numer}, we can see the numerical value of the critical point $\alpha_{c,j}$ of $\lambda_j$. In particular, we see that  $\alpha_{c,1} \approx 0.35$, which is consistent with Lemma~\ref{l:upper-lambda1} which proves that $\alpha_{c,1} <\left(\frac{24}{25}\right)^{1/3}$.
Also on table~\ref{t:tableau-numer}, we see the value of the quotient $\lambda_j (\alpha_{c,j})/\alpha_{c,j}^2$ which converges very rapidly towards $E_c \approx 2.35$, as predicted by Theorem~\ref{th1.4} .

\begin{table}
\begin{center}
\begin{tabular}{ | l | c | c | c | c | c | c | }
 \hline			
   Value of $j$& $j=1$ & $j=2$& $j=3$ & $j=4$& $j=5$ & $j=6$ \\
   \hline
   Value of the critical point $\alpha_{c,j}$ of $\lambda_j$ & $0.35$ &  $1.13$ & $1.14$ & $1.55$ & $1.78$ & $2.06$ \\
   \hline
  Value of the quotient  $\lambda_j (\alpha_{c,j})/\alpha_{c,j}^2$ &   $4.78$ & $1.27$ & $2.69$ & $2.25$ & $2.41$ & $2.34$ \\
 \hline  
 \end{tabular}
 \end{center}
\caption{Values of the critical point $\alpha_{c,j}$ and the quotient  $\lambda_j (\alpha_{c,j})/\alpha_{c,j}^2$ for $j=1,\cdots, 6 $.}
 \label{t:tableau-numer}
\end{table}

\subsection{``Proof'' of Statement \ref{St02}}
We here provide with a completely numerical proof. Hence our statement can not be considered as a mathematical theorem. We could have proved rigorously some reduction to an explicitly finite interval of values for $\alpha$, hence excluding 
 the presence of more than one minimum outside this interval. But at the moment, the remaining interval is still very large ($[0, 45]$)  and 
  this does not seem sufficiently decisive. 
Note  (see \eqref{eq:asba} and \eqref{eq:asbb}) that 
$$
\lambda_4(\alpha)/\lambda_2(\alpha) \to  3$$
 as $\alpha \rightarrow +\infty$\,.
 The values of the quotient $\alpha \mapsto \lambda_4(\alpha)/\lambda_2(\alpha)$ are plotted on Figures~\ref{f:lambda-42-1}-\ref{f:lambda-42-2}. 
Figures~\ref{f:lambda-42-1}-\ref{f:lambda-42-2} numerically show that $\min\{\lambda_4(\alpha)/\lambda_2(\alpha), \alpha \in \R\} \approx 2.82$ and thus $\lambda_4/\lambda_2 \geq 7/3$ on $\R$. According to Lemma~\ref{lemodd}, this (numerically) proves that $\lambda_2''(\alpha_c)>0$, and consequently shows uniqueness of a critical point for $\lambda_2$, hence proving Statement~\ref{St02}.

\begin{figure}[h]
\begin{center}
\includegraphics[width=15pc, height=10pc]{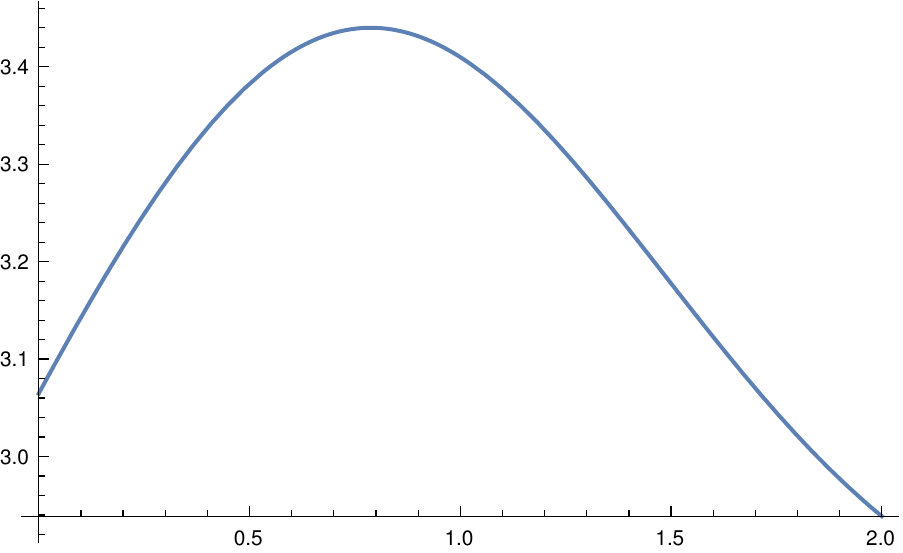}
\end{center}
\caption{Graph of $\lambda_4/\lambda_2$  for $\alpha \in (0,2)$} \label{f:lambda-42-1}
\end{figure}

\begin{figure}[h]
\begin{center}
\includegraphics[width=15pc, height=10pc]{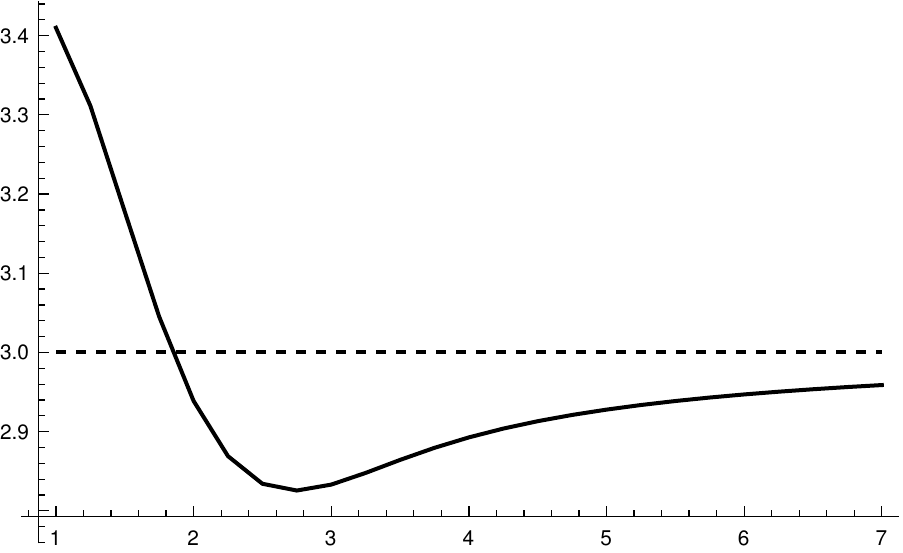}
\end{center}
\caption{Graph of $\lambda_4/\lambda_2$  for $\alpha>1$} \label{f:lambda-42-2}
\end{figure}

\appendix

\end{document}